\def\mm{\frak m}
\def\endbox{ \hfill\quad\qed}
\def\an{{\operatorname{an}}}
\def\san{{\operatorname{super-an}}}
\def\vth{\vartheta}
\def\sig{\sigma}
\def\Orb{\operatorname{Orb}}
\def\vsim{\sim_v}
\def\bR{\overline R}
\def\Rmq{{R \setminus \gq}}
\def\bRmq{\overline{R \setminus \gq}}
\def\({\left(}
\def\){\right)}
\def\MFC{\operatorname{MFC}}
\def\OFC{\operatorname{OFC}}
\def\Uv{U(v)}
\def\bUv{\overline {U(v)}}
\def\bvrp{\overline \vrp}
\def\tvrp{\widetilde \vrp}
\def\bU{\overline U}
\def\bx{\bar x}
\def\by{\bar y}
\def\be{\bar e}
\def\bv{\bar v}
\def\m{$m$}
\def\N{\mathbb N}
\def\R{\mathbb R}
\def\bt{\beta}
\def\UTG{GS}
\def\UTGR{ghost surpassing relation}
\def\gq{\mathfrak q}
\def\go{\mathfrak o}
\def\tE{E_\tng}
\def\GS{GS}
\def\MFC{\operatorname{MFC}}
\def\MFCE{MFCE}
\def\STR{\operatorname{STR}}
\def\trc{\operatorname{trc}}
\def\lc{\operatorname{lc}}
\def\corn{\operatorname{Corn}}
\def\tng{{\operatorname{t}}}
\def\stg{{\operatorname{s}}}
\def\tE{E_\tng}
\def\tCov{\Cov_\tng}
\def\sCov{\Cov_\stg}
\def\tsCov{\Cov_{\tng,\stg}}
\def\stCov{\Cov_{\tng \stg}}
\def\ep{\varepsilon}
\def\al{\alpha}
\def\gm{\gamma}
\def\Gm{\Gamma}
\def\vrp{\varphi}
\def\tv{\tilde{v}}
\def\pipeGS{{\underset{\operatorname{\, gs }}{\mid}}}
\def\lmodg{\mathrel  \pipeGS \joinrel \joinrel =}
\def\Real{\mathbb R}
\newcommand{\etype}[1]{\renewcommand{\labelenumi}{(#1{enumi})}}
\def\eroman{\etype{\roman}}
\def\ealph{\etype{\alph}}
\def\tT{\mathcal T}
\def\tG{\mathcal G}
\def\pipeGS{{\underset{\operatorname{\, gs }}{\mid}}}
\def\lmod{\mathrel  \mid    \joinrel =}
\def\lmodg{\mathrel  \pipeGS   \joinrel \joinrel =}
\newtheorem{thm}{Theorem} [section]
\newtheorem*{thm*}{Theorem}
\newtheorem{cor}[thm]{Corollary}
\newtheorem{lem}[thm]{Lemma}
\newtheorem{lemma}[thm]{Lemma}
\newtheorem{prop}[thm]{Proposition}
\newtheorem*{claim*} {Claim}
\newtheorem*{theorem13.5'} {Theorem 13.5$'$}
\newtheorem{acknowledgment*}[thm] {Acknowledgment}
\newtheorem{example}[thm]{Example}
\newtheorem{examp}[thm]{Example}
\newtheorem{subexamp}[thm]{Subexample}
\newtheorem{examples}[thm]{Examples}
 \newtheorem{remark}[thm]{Remark}
  \newtheorem{remarks}[thm]{Remarks}
 \newtheorem*{remark*}{Remark}
 \newtheorem{defn}[thm]{Definition}
 \newtheorem*{defn*}{Definition}
\newtheorem{construction}[thm]{Construction}
\newtheorem{terminology}[thm]{Terminology}
\newtheorem{schol}[thm]{Scholium}
\newtheorem{notation}[thm]{Notation}
\newtheorem*{notation*} {Notation}
\newtheorem*{comm*} {Comment}
\newtheorem{rem}[thm]{Remark}
\newcommand{\thmref}[1]{Theorem~\ref{#1}}
\newcommand{\propref}[1]{Proposition~\ref{#1}}
\newcommand{\lemref}[1]{Lemma~\ref{#1}}
 \renewcommand{\sectionmark}[1]{}
\newcommand{\bfem}[1]{\textbf{#1}}
\newcommand{\iy}{\infty}
\newcommand{\diag}{\operatorname{diag}}
\newcommand{\Cov}{\operatorname{Cov}}
\newcommand{\Card}{\operatorname{Card}}
\newcommand{\lm}{\lambda}
 \newcommand{\id}{\operatorname{id}}
 \newcommand{\supp} {\operatorname{supp}}
\newcommand{\osr}{\overset\sim \rightarrow}
\renewcommand{\a}{\alpha}
\begin{document}

\title[Supertropical semirings and supervaluations]
{Supertropical semirings and supervaluations}

\author[Z. Izhakian]{Zur Izhakian}
\address{Department of Mathematics, Bar-Ilan University, 52900 Ramat-Gan,
Israel} \email{zzur@math.biu.ac.il}
\author[M. Knebusch]{Manfred Knebusch}
\address{Department of Mathematics,
NWF-I Mathematik, Universit\"at Regensburg, 93040 Regensburg,
Germany} \email{manfred.knebusch@mathematik.uni-regensburg.de}
\author[L. Rowen]{Louis Rowen}
 \address{Department of Mathematics,
 Bar-Ilan University, 52900 Ramat-Gan, Israel}
 \email{rowen@macs.biu.ac.il}

\thanks{This research of the first and third authors is supported  by the
Israel Science Foundation (grant No.  448/09).}
\thanks{This research of the second author was supported in part by
 the Gelbart Institute at
Bar-Ilan University, the Minerva Foundation at Tel-Aviv
University, the Department of Mathematics   of Bar-Ilan
University, and the Emmy Noether Institute at Bar-Ilan
University.}
\thanks{This paper was completed under the auspices of the Resarch
in Pairs program of the Mathematisches Forschungsinstitut
Oberwolfach, Germany.}

\subjclass[2010]{Primary: 13A18, 13F30, 16W60, 16Y60; Secondary:
03G10, 06B23, 12K10,   14T05}
\date{\today}


\keywords{Supertropical algebra, monoids, bipotent semirings,
valuation theory,  supervaluations, transmissions, lattices}


\begin{abstract}
We interpret a valuation $v$ on a ring $R$ as a map $v: R \to M$
into a so called  bipotent semiring $M$ (the usual max-plus
setting), and then define a \textbf{supervaluation}  $\vrp$ as a
suitable map into a supertropical semiring $U$ with ghost ideal
$M$ (cf. \cite{IzhakianRowen2007SuperTropical},
\cite{IzhakianRowen2008Matrices}) covering $v$ via the ghost map
$U \to M$. The set  $\Cov(v)$ of all supervaluations covering $v$
has a natural ordering which makes it a  complete lattice. In the
case that $R$ is a field, hence for $v$ a Krull valuation, we give
a completely explicit description of $\Cov(v)$.


The theory of supertropical semirings and supervaluations  aims
for an algebra fitting the needs of tropical geometry better than
the usual max-plus setting. We illustrate this by giving a
supertropical version of Kapranov's Lemma.

\end{abstract}

\maketitle

{\small \tableofcontents}

\numberwithin{equation}{section}

\section*{Introduction}

As explained in \cite{IMS} and \cite{Gathmann:0601322}, tropical
geometry grew out of a logarithmic correspondence taking a
polynomial $f(\lm _1, \dots, \lm _n)$ over the ring of Puiseux
series to a corresponding polynomial $\bar f(\lm _1, \dots, \lm
_n)$ over the max-plus algebra $T$. A key observation is
Kapranov's Lemma, that this correspondence sends the algebraic
variety defined by $f$ into the so-called {\it corner locus}
defined by $\bar f$. More precisely, this correspondence involves
  the negative of a  valuation (where the target ($T$) is an
ordered monoid rather than an ordered group), which has led
researchers in tropical mathematics to utilize valuation theory.
In order to avoid the introduction of the negative, some
researchers, such as \cite{SS}, have used the min-plus algebra
instead of the max-plus algebra. There is a deeper result which
describes the image of this correspondence; several versions
appear in the literature, one of which is in~\cite{Payne08}.

Note that whereas a valuation $v$ satisfies $v(ab) = v(a) + v(b),$
one only has $$v(a+b) = \min \{ v(a), v(b)\}$$ when $v(a) \ne
v(b);$ for the case that $v(a) = v(b)$, $v(a+b)$ could be any
element $\ge v(a).$ From this point of view, the max-plus (or,
dually, min-plus) algebra does not precisely reflect the tropical
mathematics.
 In order to deal with this issue, as well as to enhance the algebraic structure of the
max-plus algebra $T$, the first author introduced a cover of $T$,
graded by the multiplicative monoid $(\mathbb Z_2,\cdot),$ which
was dubbed the {\it extended tropical arithmetic}. Then, in
\cite{IzhakianRowen2007SuperTropical} and
\cite{IzhakianRowen2008Matrices}, this structure has been
amplified to the notion of \textbf{supertropical semiring}. A
supertropical semiring $U$ is equipped with a ``\textbf{ghost
map}"  $\nu := \nu_U : U \to U$, which respects addition and
multiplication and is idempotent, i.e., $\nu \circ \nu = \nu$.
Moreover $a + a = \nu(a)$ for every  $a \in U$ (cf. \S3). This
rule replaces the rule $a+ a= a$ in the usual max-plus (or
min-plus) arithmetic.  We call $\nu(a)$ the ``\textbf{ghost}"  of
$a$ (often writing $a^\nu$ instead of $\nu(a)$), and we call the
elements of $U$ which are not ghost
``\textbf{tangible}"\footnote{The element $0$ may be regarded both
as tangible and ghost.}.

The image of the ghost map is a so-called \textbf{bipotent
semiring}, i.e., a semiring $M$ such that $a + b \in \{ a, b\}$
for every $a, b \in M$. So $M$ is a semiring typically occurring
in tropical algebra. In this paper supertropical and bipotent
semirings are nearly  always tacitly  assumed to be commutative.

It turns out that supertropical semirings allow a refinement of
valuation theory to a theory of ``supervaluations".
Supervaluations  seem to be  able to give an enriched version of
tropical geometry. In the present paper we illustrate this by
giving a refined and generalized version of Kapranov's Lemma (\S9,
\S11). Very roughly, one may say that the usual tropical algebra
is present in the ghost level of our supertropical setting.

We consider valuations on rings (as defined by Bourbaki \cite{B})
instead of just fields. We mention that these are essential for
understanding families of valuations on fields,
cf. e.g. \cite{HK} and \cite{KZ}. We use multiplicative
notation, writing  a valuation $v$ on a ring  $R$  as a map into
$\Gm \cup \{ 0 \}$ with $\Gm$ a multiplicative ordered abelian
group and $0 < \Gm$, obeying the rules
  \begin{equation}\renewcommand{\theequation}{$*$}\addtocounter{equation}{-1}\label{eq:str.1}
  \begin{array}{c}
    v(0) = 0, \quad v(1) = 1, \quad v(ab) = v(a) v(b),  \\[2mm]
     v(a+b)
\leq \max(v(a),v(b)). \\
  \end{array}
\end{equation}
We view the ordered monoid $\Gm \cup \{ 0\}$ as a bipotent
semiring by introducing  the addition $x + y := \max(x,y)$, cf.
\S1 and \S2. It is then  very natural to replace $\Gm \cup  \{ 0
\}$ by any bipotent semiring $M$, and to define an
\textbf{\m-valuation} (= monoid valuation) $v: R \to M$ in the
same way $(*)$ as before.


Given an \m-valuation $v : R \to M$ there exist multiplicative
mappings $\vrp: R \to M$ into various supertropical semirings $U$,
with $\vrp(0) = 0$, $\vrp(1) =1$, such that  $M$ is the ghost
ideal of $U$ and $\nu_U \circ \vrp = v$. These are the
\textbf{supervaluations} covering $v$, cf. \S4.

In \S5 we define maps $\al:U \to V$ between supertropical
semirings, called  \textbf{transmissions}, which have the property
that for a supervaluation $\vrp:R \to U$ the composite $\al \circ
\vrp : R \to V$ is again a supervaluation. Given two
supervaluations $\vrp: R \to U$ and $\psi: R \to V$ (not
necessarily covering the same \m-valuation $v$), we say that
$\vrp$ \textbf{dominates} $\psi$, and write $\vrp \geq \psi$, if
there exists a transmission $\al: U \to V$, such that  $\psi = \al
\circ \vrp$. \{The transmission $\al$ then is essentially
unique.\}

Restricting the  dominance relation to the set of
supervaluations\footnote{More precisely we should consider
equivalence classes of supervaluations. We suppress this point
here.} covering a fixed \m-valuation $v: R \to M$ we obtain a
partially ordered set $\Cov(v)$, which  turns out to be a complete
lattice, as proved in \S7. The bottom element of this lattice is
the \m-valuation $v$, viewed as a supervaluation. The top element,
denoted $\vrp_v: R \to U(v)$, can be described explicitly in good
cases. This description is already given in \S4, cf. Example
\ref{examp4.5}. The other elements of $\Cov(v)$ are obtained from
$\vrp_v$ by dividing out suitable equivalence relations on the
semiring $U(v)$, called \MFCE-relations (= multiplicative fiber
conserving equivalence relations). They are defined in \S6.
Finally in \S8, we obtain an explicit description of all elements
of $\Cov(v)$ in the case that $R$ is a field, in which case $v$ is
a Krull valuation.

If $R$ is only a ring, our results are far less complete.
Nevertheless it seems to be absolutely necessary to work at least
in this generality for many reasons, in particular functorial
ones, cf. e.g. \cite{HK}, \cite{KZ}.

In \S9 we delve deeper into the supertropical theory to pinpoint a
relation, which we call the \textbf{\UTGR}, which seems to be a
key for working in supertropical semirings. On the one hand, the
{\UTGR}\ restricts to equality on tangible elements, thereby
enabling us to specialize to the max-plus theory. On the other
hand, the \UTGR\ appears in virtually every supertropical theorem
proved so far, especially in supertropical matrix theory in
\cite{IzhakianRowen2008Matrices} and
\cite{IzhakianRowen2009Equations}.

In the present paper the ghost surpassing relation is the
essential gadget for understanding and proving a general version
of Kapranov´s  Lemma in \S11 (Theorem 11.15, preceded by Theorem
9.11), valid for any valuation $v: R \to M$ which is
``\textbf{strong}". This means that $v(a+b) = \max(v(a), v(b))$
whenever $v(a) \neq v(b)$. If $R$ is a ring, every valuation on
$R$ is strong, as is very well known, but if $R$ is only a
semiring, this is a restrictive condition.   On our way to
Kapranov's Lemma we employ supervaluations $\vrp \in \Cov(v)$
which are \textbf{tangible}, i.e., have only tangible values, and
are \textbf{tangibly additive}, which means  that $\vrp(a+b) =
\vrp(a) + \vrp(b)$ whenever $\vrp(a) + \vrp(b)$ is tangible. We
apostrophize tangibly additive supervaluations which cover strong
\m-valuations as \textbf{strong supervaluations}.

The strong  tangible supervaluations in $\Cov(v)$ seem to be the
most suitable ones  for applications in tropical geometry also
beyond Kapranov's Lemma, as to be explained at the end of this
introduction. They form a sublattice $\tsCov(v)$ of $\Cov(v)$. In
particular there exists an ``initial" tangible strong valuation in
$\Cov(v)$, denoted by $\overline \vrp _v$, which dominates all
others. It gives the ``best" supertropical version of Kapranov's
Lemma, cf. \S9. At the end of \S10 we make $\overline \vrp _v$
explicit in the case that $v$
 is the natural valuation  of the field of formal Puiseux series  in a
 variable $t$ (with real or with rational exponents). We can
 interpret the value of $\overline \vrp _v (a(t))$ of a Puiseux
 series $a(t)$ as the leading term of $a(t)$, while $v(a(t))$ can
 be seen as the $t$-power contained in the leading term.

Strictly speaking, Kapranov's Lemma extends the valuation $v$ to
the polynomial ring $R[\lm _1, \dots, \lm _n]$ over $R$, with
target in the polynomial ring $M[\lm _1, \dots, \lm _n]$, which no
longer is bipotent. Thus, the theory in this paper needs to be
generalized if we are to deal formally with such notions. This is
set forth in the last Section \ref{sec:13}, in which the target of
a valuation is replaced by a monoid with a binary sup operation.

Since the theory of tropicalization has developed recently in
terms of the  valuations on the field of Puiseux series, let us
indicate briefly how this theory can be extended to the
supertropical environment.

We recall  the algebraic theory of analytification, as presented
by Payne \cite{Payne09}. A \textbf{multiplicative seminorm}
$|\phantom{e}|: R\to \Real_{\geq 0}$ on a ring $R$ is a
multiplicative map satisfying the triangle inequality $$|a+b| \le
|a| + |b|$$ for all $a,b\in R.$ In particular,  any \m-valuation
$v: R \to \Real_{\geq 0}$ is a seminorm. \{Recall that we use
multiplicative notation.\} If $X$ ia an affine variety over a
field $K$ (e.g. $K$ is a field of generalized Puiseux series over
an algebraically closed field) and $v: K \to  \Real_{\geq 0}$
 is a valuation, then Payne's space $K[X]^\an$ is the set of all
 multiplicative seminorms  on $K[X]$ that extend $v$. More
 generally, if $v:K \to M$ is a valuation  with $M= \tG \cup \{0
 \}$ any bipotent semifield (cf. \S1), then we may define a space
 $K[X]^\an$ associated to $(K,v)$ and $X$ in exactly the same way.

 But in the supertropical context we can do more. Let
 $$U:= D(\tG) = \STR(\tG, \tG, \id_\tG),$$
 as defined in Example \ref{examps3.16}. This is a supertropical
 semifield with ghost part $\tG \cup \{ 0 \} = M$. We define a
 space $K[X]^\san$ as the set of all strong supervaluations $\vrp: K[X] \to
 U$ such that the valuation $w: K[X] \to M$ covered by $\vrp$
 (cf. Definition \ref{defn4.1}) is an element of
 $K[X]^\an$, i.e., $w$ extends $v$.

We have the natural map $K[X]^\san \to K[X]^\an$, given by $\vrp
\mapsto w$, which exhibits $K[X]^\san$ as a ``covering'' of
Payne's space $K[X]^\an$. But there is still another relation
between these two spaces, which seems to be more intriguing. The
set $U$ contains a second copy of $M$ as a multiplicative
submonoid, namely the tangible part  $\tT(U) \cup \{ 0\} \cong \tG
\cup \{ 0 \}$. Interpreting the elements of $K[X]^\an$ as maps
from $K[X]$ to $\tT(U) \cup \{ 0\}$  we may view $K[X]^\an$ as the
set of all tangible supervaluations $\vrp: K[X] \to
 U$ (automatically strong) with $\vrp|K$ covering $v$. Thus
 $K[X]^\an$ can be seen as the subspace of $K[X]^\san$ consisting of
 the $\vrp \in K[X]^\san$ which do not have ghost values.

Of course, nothing can prevent us from replacing $K$ by any ring,
or even semiring $R$, and $v$ by any strong \m-valuation on $R$,
and defining $K[X]^\an$ and $K[X]^\san$ in this generality.

 The reader may ask  whether
valuations and supervaluations on semirings instead of just rings
deserve interest apart from formal issues. They do. It is only for
not making a long paper even longer that we do not give
applications to semirings here.

The semiring  $R = \sum  A^2$ of sum of squares of a commutative
ring (or even a field) $A$ with $-1 \notin R$ is a case  in point.
  Real algebra seems to be a
fertile ground for studying valuations and supervaluations on
semirings. The paper contains only one very small  hint  pointing
in this direction, Example~\ref{example2.2}.


\section{Bipotent semirings}\label{bilpotentsemirings}

Let $R$ be a semiring (always with unit element $1=1_R).$ Later we
will assume that $R$ is commutative, but presently this is not
necessary.

\begin{defn}\label{defn1.1} We call a  pair $(a,b)\in
R^2$ \bfem{bipotent} if $a+b\in\{a,b\}.$ We call the
 semiring  $R$ \bfem{bipotent} if every pair $(a,b)\in R^2$
is bipotent.
\end{defn}

\begin{prop}\label{prop1.2} Assume that $R$ is a bipotent
semiring. Then the binary relation $(a,b\in R)$
\begin{equation}\label{1.1}
a\le b\quad\text{iff}\quad a+b=b
\end{equation}
on $R$ is a total ordering on the set $R,$ compatible with
addition and multiplication, i.e., for all $a,b,c\in R$
\begin{align*} &a\le b\quad\Rightarrow\quad ac\le bc,\ ca\le cb,
\\ & a\le b\quad\Rightarrow \quad a+c\le b+c.
\end{align*}
\end{prop}

\begin{proof} A straightforward check.\end{proof}

\begin{remark}\label{rem1.3}
We can define such a binary relation $\le$ by \eqref{1.1} in any
semiring, and then obtain a partial ordering compatible with
addition and multiplication. The ordering is total iff $R$ is
bipotent. Clearly, $0_R\le x$ for every $ x\in  R.$
\end{remark}

\begin{defn}\label{defn1.4} We call a semiring $R$ a
\bfem{semidomain}, if $R$ has no zero divisors, i.e., the set
$R\setminus \{0\}$ is closed under multiplication. We call $R$ a
\bfem{semifield}, if $R$ is commutative and every element $x\ne0$
of $R$ is invertible; hence $R\setminus\{0\}$ is a group under
multiplication.
\end{defn}

Given a bipotent semidomain $R$, the set $G:=R\setminus\{0\}$ is a
totally ordered monoid under the multiplication of $R.$

In this way we obtain all (totally) ordered monoids. Indeed, if
$G=(G,\cdot)$ is a given ordered  monoid, we gain a bipotent
semiring $R$ as follows: Adjoin a new element $0$ to $G$ and form
the set $R:=G\cup\{0\}.$ Extend the multiplication on $G$ to a
multiplication on~$R$ by the rules $0\cdot g=g\cdot0=0$ for any
$g\in G$ and $0\cdot0=0.$ Extend the ordering of $G$ to a total
ordering on~$R$ by the rule $0<g$ for $g\in G.$ Then define an
addition on~$R$ by the rule
$$x+y:=\max(x,y)$$
for any $x,y\in R.$ It is easily checked that $R$ is a bipotent
semiring, and that the ordering on $R$ by the rule \eqref{1.1} is
the given one. We denote this semiring $R$ by $T(G).$

These considerations can be easily amplified to the following
theorem.

\begin{thm}\label{thm1.4}
The category of (totally) ordered monoids $G$ is
isomorphic\footnote{This is more than equivalent!} to the category
of bipotent semidomains $R$ by the assignments
$$G\mapsto T(G),\quad R\mapsto R\setminus\{0\}.$$

Here the morphisms in the first category by definition are the
order preserving monoid homomorphisms $\gamma: G\to G'$ in the
weak sense; i.e., $\gamma$ is multiplicative, $\gamma(1)=1,$ and
$x\le y\Rightarrow \gamma(x)\le \gamma(y),$ while the morphisms in
the second category are the semiring homomorphisms (with
$1\mapsto1).$
\end{thm}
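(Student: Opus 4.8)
The plan is to show that the two assignments $G\mapsto T(G)$ and $R\mapsto R\setminus\{0\}$ constitute a pair of mutually inverse functors. Since the assertion is an \emph{isomorphism} of categories and not merely an equivalence, the emphasis is on ``mutually inverse'': the two object maps must be honest inverse bijections, and likewise the two morphism maps, with no natural isomorphisms to hide behind.

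I would begin at the object level. We already know that $T(G)$ is a bipotent semidomain, and that conversely $R\setminus\{0\}$, under the multiplication of $R$ and the ordering \eqref{1.1}, is a totally ordered monoid. These two constructions are inverse on the nose. From the definition of $T(G)$ one has $T(G)\setminus\{0\}=G$ with its given multiplication and order. Conversely, for a bipotent semidomain $R$ the underlying set of $T(R\setminus\{0\})$ is $R$ again; the multiplications coincide, the adjoined $0$ and $0_R$ both being absorbing; the orderings coincide, $0_R$ being the least element by Remark~\ref{rem1.3}; and the additions coincide, because in any bipotent semiring $a+b=\max(a,b)$ for the ordering \eqref{1.1} (Proposition~\ref{prop1.2}) and $0_R$ is the additive neutral element. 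Hence the object maps are mutually inverse bijections.

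Next, the morphisms. Given a weak order-preserving monoid homomorphism $\gamma\colon G\to G'$, I would set $T(\gamma)|_G=\gamma$ and $T(\gamma)(0)=0$, and check that $T(\gamma)$ is multiplicative (using multiplicativity of $\gamma$ and absorption of $0$ on both sides), unital, and additive --- the latter because an order-preserving map preserves the maximum of two elements, while the cases involving the adjoined $0$ are immediate. Conversely, a unital semiring homomorphism $f\colon R\to R'$ satisfies $f(0)=0$ and, granting the key point below, restricts to a monoid homomorphism $R\setminus\{0\}\to R'\setminus\{0\}$, which is automatically order preserving since $a\le b$ reads $a+b=b$, whence $f(a)+f(b)=f(b)$, i.e.\ $f(a)\le f(b)$. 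Both assignments visibly respect composition and identities, and they invert each other on morphisms: $T(\gamma)$ restricted to the nonzero part is $\gamma$, and for $f$ as above $T\bigl(f|_{R\setminus\{0\}}\bigr)$ agrees with $f$ on $R\setminus\{0\}$ and at $0$, hence equals $f$. Together with the object-level statement this yields the isomorphism of categories.

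The one genuinely non-formal ingredient, and the step I expect to be the crux, is that a unital semiring homomorphism $f\colon R\to R'$ of (nontrivial) bipotent semidomains sends no nonzero element of $R$ to $0_{R'}$. I would try to derive this from the semidomain hypothesis on $R'$ (Definition~\ref{defn1.4}) together with the total ordering: if $f(a)=0_{R'}$ with $1_R\le a$, then applying $f$ to the identity $1_R+a=a$ already forces $1_{R'}=0_{R'}$, which is excluded; the remaining case $a<1_R$ is the delicate one, where one has to combine multiplicativity of $f$ with the absence of zero divisors in $R'$, and making this part airtight is, I expect, the main obstacle to a fully rigorous proof.
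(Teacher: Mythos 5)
Your overall strategy (mutually inverse object maps, then mutually inverse morphism maps) is the natural one, and it is essentially what the paper leaves to the reader: the text preceding Theorem~\ref{thm1.4} only carries out the object-level check, and no further proof is given. Your object-level argument and your treatment of morphisms $\gamma\mapsto T(\gamma)$ are fine. The problem is precisely the step you flag as the crux: the claim that a unital semiring homomorphism $f\colon R\to R'$ of bipotent semidomains sends no nonzero element to $0_{R'}$ is not merely delicate in the case $a<1_R$ --- it is false there. Take $G=\{\vartheta^n\mid n\in\mathbb{N}_0\}$ with $0<\vartheta<1$ (so $G$ is a totally ordered monoid, not a group), let $R=R'=T(G)$, and define $f(0)=0$, $f(1)=1$, $f(\vartheta^n)=0$ for $n\ge 1$. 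This $f$ is multiplicative (any product involving $\vartheta^n$, $n\ge1$, again has a positive exponent), unital, and order preserving, hence additive since addition is $\max$; so $f$ is a unital semiring homomorphism of bipotent semidomains with $f^{-1}(0)\ne\{0\}$. Consequently $f$ does not restrict to a monoid homomorphism $R\setminus\{0\}\to R'\setminus\{0\}$, and the assignment $R\mapsto R\setminus\{0\}$ is not well defined on all semiring homomorphisms. Your argument for the case $1_R\le a$ (applying $f$ to $1+a=a$) is correct, and your inverse-element argument does settle the case where $G$ is a group, i.e.\ for bipotent semifields; but for general semidomains there is no way to complete the step, because the statement you need is false.

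So the gap is genuine, and it is really a gap in the literal reading of the statement rather than in your bookkeeping: the isomorphism of categories only holds if the morphisms on the semiring side are taken to be the semiring homomorphisms $f$ with $f^{-1}(0)=\{0\}$ (equivalently, those that restrict to maps of the nonzero parts), a condition that is automatic exactly when the ordered monoids are groups. A correct write-up along your lines should either impose this restriction explicitly on the second category (and then your proof goes through verbatim, with the ``crux'' step now true by fiat), or, if one insists on all semiring homomorphisms, enlarge the first category so that a morphism $G\to G'$ is an order-preserving multiplicative map $G\to G'\cup\{0\}$ sending $1$ to $1$ and with multiplicatively closed complement of the zero fiber --- which is no longer the category stated in the theorem.
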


In the following we regard an ordered monoid and the associated
bipotent semiring as the same entity in a different disguise.
Usually we prefer the semiring viewpoint.

\begin{example}\label{examp1.5} Starting with the monoid $G=(\mathbb
R,+)$, i.e., the field of real numbers with the usual addition, we
obtain a bipotent semifield
$$T(\mathbb R):=\mathbb R\cup\{-\infty\},$$
where addition $\oplus$ and  multiplication  $\odot$ of $T(\mathbb
R)$ are defined as follows, and the neutral element of addition is
denoted by $-\infty$ instead of $0,$ since our monoid is now given
in additive notation. For $x,y\in\mathbb R$
\begin{align*}
x\oplus y&=\max(x,y),\\
x\odot y&=x+y,\\
(-\infty)\oplus x&=x\oplus(-\infty)=x,\\
(-\infty)\odot x&=x \odot (-\infty)=-\infty,\\
(-\infty)\oplus(-\infty)&=-\infty,\\
(-\infty)\odot(-\infty)&=-\infty.
\end{align*}
\end{example}

$T(\mathbb R)$ is the ``real tropical semifield" of common
tropical algebra, often called the ``max-plus" algebra $\mathbb
R\cup\{-\infty\}:$ cf.~ \cite{IMS}, or \cite{SS}  (there a
``min-plus" algebra is used).

\section{\m-valuations}\label{sec:mval}

In this section we assume that all occurring semirings and monoids
are commutative.

 Let $R$  be a semiring.

\begin{defn}\label{defn2.1} An \bfem{\m-valuation} (= monoid
valuation) on $R$ is a map $v: R\to M$ into a (commutative)
bipotent semiring $M\ne\{0\}$ with the following properties:
\begin{align*}
&V1: v(0)=0,\\
&V2: v(1)=1,\\
&V3:v(xy)=v(x)v(y)\quad\forall x,y\in R,\\
&V4: v(x+y)\le v(x)+v(y)\quad [=\max(v(x),v(y))]\quad \forall
x,y\in R.
\end{align*}
We call the \m-valuation $v$ \bfem{strict}, if instead of V4 the
following stronger axiom holds:
\begin{align*}
 V5: v(x+y)=v(x)+v(y)\quad \forall x,y\in R. & \qquad \qquad
 \qquad \qquad
\end{align*}
\end{defn}

Note that a strict \m-valuation $v: R\to M$ is just a semiring
homomorphism from $R$ to~$M.$
\medskip

In the special case that $M=\Gamma\cup\{0\}$ with $\Gamma$ an
ordered abelian group, we call the \linebreak \m-valuation $v:
R\to M$ a \bfem{valuation}. Notice that in the case that $R$ is a
ring (instead of a semiring), this is exactly the notion of a
valuation as defined by Bourbaki \cite{B} (Alg. Comm. VI, \S3,
No.1) and studied, e.g., in \cite{HK} and \cite[Chap.~I]{KZ},
except that for $\Gamma$ we have chosen the multiplicative
notation instead of the additive notation.

If $v: R\to M$ is an \m-valuation, we may replace $M$ by the
submonoid $v(R).$ We then speak of $v$ as a \bfem{surjective}
\bfem{\m-valuation}.

\begin{defn}\label{defn2.2}
A (commutative) monoid $G$ is called \bfem{cancellative}, if, for
any $a,b,c\in G$, the equation $ac=bc$ implies $a=b.$
\end{defn}

Notice that an ordered monoid $G$ is cancellative iff $a<b$
implies $ac<bc$ for any $a,b,c\in G.$ An ordered cancellative
monoid can be embedded into an ordered abelian group $\Gamma$ in
the well-known way by introducing formal fractions $\frac{a}{b}$
for $a,b\in G.$ Then an \m-valuation $v$ from $R$ to
$T(G)=G\cup\{0\}$ is essentially the same thing as an \m-valuation
from $R$ to $\Gamma\cup\{0\}.$ For this reason, we extend the
notion of ``valuation" from above as follows.

\begin{defn}\label{defn2.3}
A \bfem{valuation} on a semiring $R$ is an \m-valuation $v: R\to
G\cup\{0\}$ with $G$ a cancellative monoid.
\end{defn}

\m-valuations on rings have been studied in \cite{HV}, and then by
D. Zhang \cite{Z}.

If $R$ is a \bfem{ring}, an \m-valuation $v: R\to M $ can
\bfem{never} be strict, since we have an element $-1\in R$ with
$1+(-1)=0,$ from which for $v$ strict it would follow that
$0_M=v(0)=\max(v(1),v(-1));$ hence $v(1)=0_M,$ a contradiction to
axiom V2. But for $R$ a semiring there may exist interesting
strict \m-valuations, even with values in a group.

\begin{example}\label{example2.2}
Let $T$ be a \bfem{preprime} in a ring $R,$ by which we simply
mean that $T$ is a sub-semiring of $R$ \ $(T+T\subset T, \ T\cdot
T\subset T,\ 0\in T, 1\in T).$ \{We do not exclude the case $-1\in
T$ (``improper preprime") but these will not matter.\}

We say that a valuation $v: R\to M$ is $T$-\bfem{convex} if the
restriction $v | T: T\to M$ is strict. As is well-known, if
$T=\sum R^2$ (and $M\setminus\{0\}$ is a group) the $T$-convex
valuations are just the real valuations on $R.$ (A valuation $v:
R\to \Gamma\cup\{0\}$ is called \bfem{real} if the residue class
field $k(v)$ is formally real.) See \cite{KZ1}, \S5 for $T$ a
preordering, and \S2 for $T=\sum R^2.$
\end{example}

 The entire paper
\cite{KZ1} witnesses the importance of $T$-convex valuations for
$T$ a preordering.

If $R$ is a ring, every \m-valuation on $R$ is strong. This can be
seen by the same argument as is well-known for valuations on
fields.

Semirings, even semifields, may admit valuations which are not
strong.

\begin{examp}\label{examp2.7}
Let $F$ be a totally ordered field, and $R:=\{x\in F|x \geq 0\}$
the subsemifield of nonnegative elements. Further let
$\Gamma:=\{x\in F|x > 0\},$ viewed as a totally ordered group, and
$M:=\{0\}\cup\Gamma$ the associated bipotent semifield. The map
$v: R\to M$ with $v(0)=0,$ $v(a)=\frac{1}{a}$ for $a\ne0$, is a
valuation on $R,$ which is not strong.
\end{examp}

\begin{prop}\label{prop2.4}
{}\quad

\begin{enumerate}
\item[a)] If $v: R\to M$ is an \m-valuation and $M$ is a bipotent
semidomain, then $v^{-1}(0)$ is a prime ideal of $R$ (i.e., an
ideal of $R,$ whose complement in $R$ is closed  under
multiplication).

\item[b)] If  $v$ is   strong, then, for any $x\in R$ and $z\in
v^{-1}(0),$
\begin{equation}\label{2.1}
v(x+z)=v(x).\end{equation}
\end{enumerate}
\end{prop}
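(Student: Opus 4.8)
The plan is to establish both parts directly from the valuation axioms V1--V4 (and V5-type behavior encoded in strongness), using the total ordering on the bipotent target.

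For part (a), I would first check that $\gp := v^{-1}(0)$ is an ideal. It contains $0$ by V1. If $z, w \in \gp$ then $v(z+w) \le v(z) + v(w) = 0 + 0 = 0$, so $v(z+w) = 0$ since $0_M \le x$ for all $x \in M$ (Remark~\ref{rem1.3}); hence $z + w \in \gp$. If $z \in \gp$ and $x \in R$ then $v(xz) = v(x) v(z) = v(x) \cdot 0 = 0$ (using that $0$ is absorbing in $M$), so $xz \in \gp$. This shows $\gp$ is an ideal. For primality, suppose $xy \in \gp$, i.e., $v(x) v(y) = 0$ in $M$. Since $M$ is a semidomain, $M \setminus \{0\}$ is closed under multiplication, so $v(x) v(y) = 0$ forces $v(x) = 0$ or $v(y) = 0$; that is, $x \in \gp$ or $y \in \gp$. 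Thus the complement $R \setminus \gp$ is closed under multiplication, and it is nonempty since $v(1) = 1 \ne 0$ (here I use $M \ne \{0\}$, which forces $1_M \ne 0_M$). So $\gp$ is a prime ideal.

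For part (b), let $x \in R$ and $z \in v^{-1}(0)$. By V4, $v(x+z) \le v(x) + v(z) = v(x) + 0 = v(x)$, so one inequality is immediate and needs no strongness. For the reverse inequality $v(x) \le v(x+z)$, I would write $x = (x+z) + (-z)$ --- wait, this uses additive inverses, which are not available in a general semiring. Instead I apply strongness directly: strongness says $v(a+b) = \max(v(a), v(b))$ whenever $v(a) \ne v(b)$. Apply this with $a = x+z$ and... this again would need to recover $x$. The clean route: since $v(z) = 0 \le v(x)$, consider two cases. If $v(x) = 0$ then $v(x+z) \le v(x) = 0$ forces $v(x+z) = 0 = v(x)$ and we are done. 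If $v(x) \ne 0 = v(z)$, then by strongness applied to the pair $(x, z)$ we get $v(x+z) = \max(v(x), v(z)) = v(x)$ directly, since $v(x) \ne v(z)$. Either way $v(x+z) = v(x)$, which is \eqref{2.1}.

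The only subtle point --- and the place to be careful --- is that one must resist the temptation to use $-z$ or subtraction anywhere, since $R$ is merely a semiring; everything must go through the definition of strongness (the equality $v(a+b) = \max(v(a),v(b))$ when $v(a) \neq v(b)$) and the order-theoretic fact $0_M \le x$. In part (a) the one nonroutine ingredient is invoking the semidomain hypothesis on $M$ at exactly the right moment to get primality; without it $v^{-1}(0)$ need only be an ideal. No deeper obstacle is expected; this is essentially a verification, and the author's ``straightforward check'' flavor of the earlier proofs suggests the same here.
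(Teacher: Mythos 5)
Your proposal is correct and follows essentially the same route as the paper: part (a) via V4 plus $0_M\le x$ for closure under addition, V3 for absorption, and the semidomain property of $M$ for primality; part (b) via the V4 inequality and then strongness applied to the pair $(x,z)$ when $v(x)\neq 0$ (the paper leaves the trivial case $v(x)=0$ implicit, which you spell out). Your instinct to avoid any use of $-z$ is exactly right, and the argument as finally stated is sound.
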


\begin{proof} a): If $v(x)=0,$\ $v(y)=0,$ then
$$v(x+y)\le\max(v(x),v(y))=0;$$
hence $v(x+y)=0.$ Thus $v^{-1}(0)$ is closed under addition. If
$x\in R,$\ $z\in v^{-1}(0),$ then $v(xz)=v(x)v(z)=0.$ Thus
$v^{-1}(0)$ is closed under multiplication by elements in $R.$ If
$v(x)>0,$ \ $v(z)>0$, then $v(xz)=v(x)v(z)>0.$ Thus $R\setminus
v^{-1}(0)$ is closed under multiplication.

b): We have $v(x+z)\le \max(v(x),v(z))=v(x).$ Assume  that $v$ is
strong. If $v(x)\ne0$ we have
$$
v(x+z)=\max(v(x),v(z))=v(x).
$$
\end{proof}

If $v: R\to M$ is an arbitrary \m-valuation, then it is still
obvious that $v^{-1}(0)$ is an ideal of~$R.$

\begin{defn}\label{defn2.5}
We call the   ideal $v^{-1}(0)$ the \bfem{support} of the
\m-valuation $v,$ and write $v^{-1}(0)=\supp(v).$ We call the
support of $v$ \bfem{insensitive}, if the equality \eqref{2.1}
above holds for any $x\in R$ and $z\in\supp(v),$ \bfem{sensitive}
otherwise.
\end{defn}

\propref{prop2.4}.b tells us that $\supp(v)$ is insensitive if $v$
is strong. In particular, this holds if $R$ is a ring.

\begin{example}\label{example2.7}
Let $\Gamma$ be an ordered abelian group and $H$ is a convex
proper subgroup. Let $\mathfrak a :=\{g\in \Gamma\mid g>
H\}\cup\{0\}.$ We regard $\Gamma\cup\{0\}$ as a bipotent semifield
(cf. \S1), and define a subsemiring $M$ of $\Gamma\cup\{0\}$ by
$$M:=H\cup\mathfrak a .$$
Notice that we have $H\cdot\mathfrak a \subset\mathfrak a ,$\
$\mathfrak a \cdot \mathfrak a \subset\mathfrak a ,$ and
$\mathfrak a +\mathfrak a \subset \mathfrak a .$ Thus $M$ is
indeed a subsemiring of $\Gamma\cup\{0\},$ and $\mathfrak a $ is
an ideal of $M$. We define a map  $v: M\to H\cup\{0\}$ by setting
$v(x)=x$ if $x\in H,$ and $v(x)=0$ if $x\in\mathfrak a .$ It is
easily checked that $v$ fulfills the axioms V1--V3 and moreover
has  the following ``bipotency":
$$ \text{If } \ a,b \in M \  \text{ and } \ v(a) \neq v(b),  \ \text{ then } \ v(a+b) \in \{ v(a), v(b)\}.$$
 But the support $\mathfrak a$ of $v$
is sensitive: For $x\in H,$ \ $z\in \mathfrak a $ and $z\ne0,$ we
have $v(x)>0,$\ $v(z)=0,$\ $x+z=z;$ hence $v(x+z)=0.$
\end{example}

We switch over to the problem of ``comparing" different
\m-valuations on the same semi-ring~$R.$

\begin{defn}\label{defn2.11}
Let $v: R\to M$ and $w: R\to N$ be \m-valuations.
\begin{enumerate}\item[a)] We say that $v$ \bfem{dominates} $w,$
if for any $a,b\in R$
$$v(a)\le v(b)\Rightarrow w(a)\le w(b).$$
\item[b)] If $v$ dominates $w$  and $v$ is surjective, there
clearly exists a unique map \\ $\gamma: M\to N$ with
$w=\gamma\circ v.$ We denote this map $\gamma$ by $\gamma_{w,v}.$
\end{enumerate}
\end{defn}

Clearly, $\gamma_{w,v}$ is multiplicative and sends $0$ to $0$ and
$1$ to $1$. $\gamma_{w,v}$ is also order-preserving and hence is a
homomorphism from the bipotent semiring $M$ to $N.$

\begin{prop}\label {prop2.12}
Assume that $M,N$ are bipotent semirings and $v:R\to M$ is a
surjective \m-valuation.
\begin{enumerate}\item[a)] The \m-valuations $w: R\to N$
dominated by $v$ correspond uniquely with the homomorphisms
$\gamma: M\to N$ via $w=\gamma\circ v,$ $\gamma=\gamma_{w,v}.$
\item[b)] If $v$ has one of the properties ``strict",  or ``strong",  and dominates $w$, then $w$ has the same
property.
\end{enumerate}
\end{prop}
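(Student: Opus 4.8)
The plan is to prove (a) by exhibiting a bijection between the \m-valuations $w: R \to N$ dominated by $v$ and the semiring homomorphisms $\gamma: M \to N$, and then to deduce (b) by transporting the relevant additive identity across such a $\gamma$.

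For (a), one direction is already available: if $v$ dominates $w$, then by Definition~\ref{defn2.11}(b) and the paragraph following it we obtain the homomorphism $\gamma_{w,v}: M \to N$ with $w = \gamma_{w,v} \circ v$, and surjectivity of $v$ makes any such $\gamma$ unique. For the other direction I would take an arbitrary semiring homomorphism $\gamma: M \to N$ and set $w := \gamma \circ v$; the routine verification that $w$ satisfies V1--V4 uses only that $\gamma$ preserves $0$, $1$, products, and order (a semiring homomorphism is automatically order-preserving, since $a + b = b$ forces $\gamma(a) + \gamma(b) = \gamma(b)$), so that $w(x+y) = \gamma(v(x+y)) \le \gamma(v(x)+v(y)) = w(x) + w(y)$; the same order-preservation shows that $v$ dominates $w$. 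The two assignments are mutually inverse: $\gamma_{w,v}\circ v = w$ by construction of $\gamma_{w,v}$, and $\gamma_{\gamma\circ v,\, v} = \gamma$ because $v$ is surjective. This yields the asserted unique correspondence.

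For (b), write $w = \gamma \circ v$ with $\gamma = \gamma_{w,v}$ as furnished by (a). If $v$ is strict, then for all $x,y \in R$ we get $w(x+y) = \gamma(v(x)+v(y)) = \gamma(v(x)) + \gamma(v(y)) = w(x) + w(y)$, so $w$ is strict. If $v$ is strong and $w(x) \ne w(y)$, then a fortiori $v(x) \ne v(y)$ (since $w = \gamma\circ v$), hence $v(x+y) = \max(v(x),v(y))$ by strongness of $v$, and applying the order-preserving $\gamma$ yields $w(x+y) = \max(w(x),w(y))$; thus $w$ is strong.

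There is no genuine obstacle in this argument; everything rests on the fact that homomorphisms between bipotent semirings are order-preserving and on the elementary bookkeeping of composing with $\gamma$. The only place where a little care is needed is the strong case of (b), where one must use the implication $w(x) \ne w(y) \Rightarrow v(x) \ne v(y)$, which is immediate, and not its converse, which fails in general because $\gamma$ need not be injective.
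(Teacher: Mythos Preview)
Your proof is correct and follows essentially the same approach as the paper's own proof. The paper's argument is considerably more terse---it dispatches both directions of (a) and all of (b) in two sentences, relying on the remark after Definition~\ref{defn2.11} for one direction and saying ``clearly'' for the other---but the content is identical: use that $\gamma_{w,v}$ is already known to be a homomorphism, check that any homomorphism $\gamma$ composed with $v$ yields an \m-valuation dominated by $v$, and observe that the strict/strong property passes through $\gamma$. Your write-up simply makes explicit the order-preservation and the contrapositive step $w(x)\neq w(y)\Rightarrow v(x)\neq v(y)$ that the paper leaves to the reader.
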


\begin{proof}
If $w$ is an \m-valuation dominated by $v$ then we know already
that $\gm := \gm_{w,v}$ is a homomorphism and $w = \gm \circ v$.
On the other hand, given a homomorphism $\gm: M \to N$, clearly
$\gm \circ v$ is an \m-valuation, and $\gm \circ v$ inherits from
$v$ each of the properties ``strict" and ``strong".
%
%
\end{proof}

We mention that for strong \m-valuations the dominance condition
in Definition \ref{defn2.11} can be weakened.

\begin{prop}\label{prop2.122}
Assume that $v:R \to M$ and $w:R \to N$ are strong \m-valuations
and that
$$ \forall a,b \in R: \qquad v(a) = v(b) \ \Rightarrow \ w(a) = w(b).$$

Then $v$ dominates $w$.
\end{prop}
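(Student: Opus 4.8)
The plan is to deduce the dominance inequality $v(a)\le v(b)\Rightarrow w(a)\le w(b)$ from the given implication for \emph{equalities} by applying the hypothesis not to the pair $(a,b)$ but to the pair $(a+b,\,b)$, exploiting strongness on both sides together with the fact that the targets $M,N$ are totally ordered (\propref{prop1.2}).

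Concretely, I would fix $a,b\in R$ with $v(a)\le v(b)$ and split into two cases. If $v(a)=v(b)$, the hypothesis gives $w(a)=w(b)$, so in particular $w(a)\le w(b)$. If $v(a)<v(b)$, then $v(a)\ne v(b)$, so strongness of $v$ yields $v(a+b)=\max(v(a),v(b))=v(b)$; applying the hypothesis to $(a+b,\,b)\in R^2$ gives $w(a+b)=w(b)$. Now I would work in the totally ordered semiring $N$ and rule out $w(a)>w(b)$: if $w(a)>w(b)$, then $w(a)\ne w(b)$, so strongness of $w$ gives $w(a+b)=\max(w(a),w(b))=w(a)$, whence $w(b)=w(a+b)=w(a)>w(b)$, a contradiction. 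Hence $w(a)\le w(b)$, which is exactly dominance.

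I do not anticipate any genuine obstacle; the entire content is the observation that $v(a+b)=v(b)$ whenever $v(a)<v(b)$, which is what lets us feed the pair $(a+b,\,b)$ into the equality hypothesis, after which strongness of $w$ in the totally ordered target $N$ closes the argument. The only point to be careful about is that strongness may be invoked only for sums of elements with \emph{distinct} values, which is precisely why the final step is phrased as a proof by contradiction assuming $w(a)>w(b)$ (so that $w(a)\ne w(b)$ is available).
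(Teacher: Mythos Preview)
Your proposal is correct and follows essentially the same argument as the paper's proof: split on whether $v(a)=v(b)$ or $v(a)<v(b)$, in the latter case use strongness of $v$ to get $v(a+b)=v(b)$, apply the hypothesis to $(a+b,b)$, and then rule out $w(a)>w(b)$ via strongness of $w$. Your write-up is in fact slightly more explicit than the paper's about why strongness may be invoked at each step.
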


\begin{proof}
Let $a,b \in R$ and assume that $v(a) \leq v(b)$. If  $v(a) <
v(b)$ then  $v(a+ b) =  v(b)$, hence $w(a+b) = w(b)$. It follows
that $w(a) \leq w(b)$ since $w(a) > w(b)$ would imply $w(a+b) =
w(a).$ Thus $w(a) \leq w(b)$ in both cases.
\end{proof}

\section{Supertropical semirings}\label{sec:3}

\begin{defn}\label{defn3.1} A \bfem{semiring with idempotent} is
a pair $(R,e)$ consisting of a semiring $R$ and a \bfem{central}
idempotent $e.$ \{For the moment $R$ is allowed to be
noncommutative.\}
\end{defn}

We then have an endomorphism $\nu: R\to R$ (which usually does not
map 1 to 1) defined by $\nu(a)=ea.$ It obeys the rules

\begin{equation}\label{3.1} \nu\circ \nu=\nu,
\end{equation}
\begin{equation}\label{3.2} a\nu(b)=\nu(a)b=\nu(ab).
\end{equation}

\noindent Conversely, if a pair $(R,\nu)$ is given consisting of a
semiring $R$ and an endomorphism $\nu$ (not necessarily
$\nu(1)=1),$ such that \eqref{3.1}, \eqref{3.2} hold, then
$e:=\nu(1)$ is a central idempotent of $R$ and $\nu(a)=ea$ for
every $a\in R.$

Thus such pairs $(R,\nu)$ are the same objects as semirings with
idempotents.

\begin{defn}\label{defn3.2} A \bfem{semiring with ghosts} is
a  semiring with idempotent $ (R,e)$ such that the following axiom
holds  $(\nu(a):=ea)$
\begin{equation}\label{3.3}
\nu(a)=\nu(b)\quad\Rightarrow \quad a+b=\nu(a).
\end{equation}
\end{defn}

\begin{remark}\label{rem3.3}
This axiom implies  that  $ea=e(a+b)=ea+eb$ if $\nu(a)=\nu(b).$ We
\bfem{do not} want to demand that then $eb=0.$ Usually, $(R,+)$
will be a highly non-cancellative abelian semigroup.
\end{remark}

\begin{terminology}\label{term3.4}
If  $(R,e)$ is a semiring with ghosts, then $\nu : x\mapsto ex,$\
$R\to R$ is called the \bfem{ghost map} of $(R,e).$ The idea is
that every $x\in R$ has an associated ``ghost" $\nu(x),$ which is
thought of to be somehow ``near" to the zero element $0$ of $R,$
without necessarily being 0 itself. \{That will happen for all
$x\in R$ only if $e=0.$\} We call $eR$ the \bfem{ghost ideal} of
$(R,e).$
\end{terminology}

Now observe that, if $(R,e)$ is a semiring with ghosts, the
idempotent $e$ is determined by the semiring $R$ above, namely
$$e=1+1.$$
Thus we may suppress the idempotent $e$ in the notation of a
semiring with ghosts and redefine these objects as follows.

\begin{defn}\label{defn3.5}
A semiring $R$ is called a \bfem{semiring with ghosts} if
\begin{equation}1+1=1+1+1+1\tag{3.3$'$}
\end{equation}
and for all $a,b\in R$
\begin{equation}a+a=b+b\quad\Rightarrow \quad  a+b=a+a. \tag{3.3$''$}
\end{equation}
\end{defn}

\begin{remark}
If {\rm(3.3)$'$} holds then $e:=1+1$ is a central idempotent of
$R.$ Passing from $R$ to $(R,e)=(R,1+1),$ we see that {\rm(3.3
$''$)} is the previous axiom \eqref{3.3}. Notice also that
{\rm(3.3$''$)} implies that $1+1+1=1+1.$ (Take $a=1,$ $b=e.$)
Thus, $m1=1+1$ for all natural numbers $m\ge 2.$
\end{remark}

\begin{terminology}\label{term3.6}
If  $R$ is a semiring with ghosts, we write $e=e_R $ and
$\nu=\nu_R$ if necessary. We also introduce the notation
\begin{align*}
&\mathcal T:=\mathcal T(R):=R\setminus Re,\\
&\mathcal G:=\mathcal G(R):=Re\setminus\{0\},\\
&\mathcal G_0:= \mathcal G\cup\{0\}=Re.
\end{align*}
We call the elements of $\mathcal T$ the \bfem{tangible elements}
of $R$ and the elements of $\mathcal G$ the \bfem{ghost elements}
of $R.$ We do not exclude the case that $\mathcal T$ is empty,
i.e., $e=1.$ In this case $R$ is called a \bfem{ghost semiring}.
\end{terminology}

The ghost ideal $\mathcal G_0=eR$ of $R$ is itself a semiring with
ghosts, in fact, a ghost semiring. It has the property $a+a=a$ for
every $a\in Re,$ as follows from \eqref{3.3}. \{Some people call a
semiring $T$ with $a+a=a$ for every $a\in T$ an ``idempotent
semiring".\}

We mention a consequence of axiom \eqref{3.3} for the ghost map
$\nu: R\to Re,$\ $\nu(x):=ex.$

\begin{remark}\label{remark3.7} If $R$ is a semiring with ghosts,
then, for any $x\in R,$
$$\nu(x)=0\quad\Leftrightarrow\quad x=0.$$
\end{remark}

\begin{proof} $(\Leftarrow)$: evident.

$(\Rightarrow)$: We have $\nu(x)=0=\nu(0);$ hence by \eqref{3.3}
$x=x+0=\nu(0)=0.$
\end{proof}

We are ready for the central definition of the section.

\begin{defn}\label{defn3.8}
A semiring $R$ is called \bfem{supertropical} if $R$ is a semiring
with ghosts and
\begin{equation}\label{3.4}
\forall a,b\in R:\ a+a\ne b+b\quad\Rightarrow \quad
a+b\in\{a,b\}.\end{equation} In other terms, every pair $(a,b)$ in
$R$ with $ea\ne eb$ is bipotent.
\end{defn}

\begin{remarks}\label{remark3.9} $ $ 

\begin{enumerate} \eroman
    \item

It follows that then $\mathcal G(R)_0=Re$ is a bipotent
semidomain. Indeed, if $a,b$ are different elements of $\mathcal
G(R)$, then $a=ea\ne b=eb;$ hence $a+b\in\{a,b\}$ by axiom
\eqref{3.4}. If $a=0$ or $b=0$, this trivially is also true. If
$a=b$ then $a+b=ea=a.$ Thus $a+b\in\{a,b\}$ for any $a,b\in
\mathcal G(R)_0.$ The set $\mathcal G(R)$ is either empty (the
case $1+1=0)$ or $\mathcal G(R)$ is an ordered monoid under the
multiplication of $R,$ as explained in \S$1$.

\item The supertropical semirings without tangible elements are just
the bipotent semirings.

\item Every subsemiring  of a
supertropical semiring is again supertropical.
\end{enumerate}
\end{remarks}

\begin{thm}\label{thm3.10} Let $R$ be a supertropical
semiring, $e:=e_R,$ \ $\mathcal G:=\mathcal G(R).$ Then the
addition on~$R$ is determined by the multiplication on~$R$ and the
ordering on the multiplicative submonoid~$\mathcal G$ of $R,$ in
case  $\mathcal G\ne\emptyset,$ as follows. For any $a,b\in R$
$$a+b=\begin{cases} a\quad & \text{if}\quad  ea>eb,\\
 b\quad & \text{if}\quad  ea<eb,\\
  ea\quad & \text{if}\quad  ea=eb,\end{cases}$$
  If $\mathcal G=\emptyset$ then $a+b=0$ for any $a,b\in R.$
  \end{thm}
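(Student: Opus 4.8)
The plan is to verify the asserted formula directly from the axioms, splitting according to the two cases $\mathcal G \neq \emptyset$ and $\mathcal G = \emptyset$, and within the first case according to the trichotomy on $ea$ versus $eb$ (which is available because $\mathcal G_0 = Re$ is totally ordered by Remark \ref{remark3.9}(i)).

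First I would dispose of the case $\mathcal G = \emptyset$. Here $e = e_R = 1+1 = 0$, so $\nu_R \equiv 0$, and for any $a \in R$ we have $\nu(a) = 0$, hence $a = 0$ by Remark \ref{remark3.7}; thus $R = \{0\}$ and $a+b = 0$ trivially for all $a,b$. (This also makes the degenerate formula consistent with axiom (3.3), since $\nu(a) = \nu(b) = 0$ forces $a+b = 0 = ea$.)

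Now assume $\mathcal G \neq \emptyset$, fix $a,b \in R$, and recall that $ea, eb \in Re = \mathcal G_0$, which is totally ordered. Consider the three cases. If $ea = eb$, then $\nu(a) = \nu(b)$, so axiom \eqref{3.3} (in the form (3.3$''$)) gives immediately $a + b = \nu(a) = ea$, matching the third line. If $ea \neq eb$, then by the supertropicality axiom \eqref{3.4} the pair $(a,b)$ is bipotent, i.e. $a + b \in \{a, b\}$; it remains to pin down \emph{which} one. Apply the ghost map $\nu$ (a semiring endomorphism) to $a+b \in \{a,b\}$: from $a+b = a$ we would get $ea + eb = ea$, i.e. $eb \leq ea$ in the order of $\mathcal G_0$ defined by \eqref{1.1}; from $a + b = b$ we would get $ea \leq eb$. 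Since $ea \neq eb$, exactly one of $ea < eb$, $ea > eb$ holds, and these two possibilities are mutually exclusive with the two values $a+b$ can take, so $ea > eb \Rightarrow a+b = a$ and $ea < eb \Rightarrow a + b = b$, matching the first two lines. This completes the verification.

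The only genuinely non-routine point — and it is mild — is the last step: knowing that $a+b \in \{a,b\}$ is not by itself enough; one must use $\nu$ to distinguish the two alternatives, and here it is essential that $\mathcal G_0$ is \emph{totally} ordered and that $\nu$ preserves the order (equivalently, that $ea + eb = ea$ forces $ea \ge eb$), both of which are guaranteed by Proposition \ref{prop1.2} applied to the bipotent semidomain $\mathcal G_0$. I would also remark that the argument simultaneously shows the addition is entirely recovered from the multiplication on $R$ together with the order on $\mathcal G$, which is the assertion's first sentence: the value of $a+b$ in each case is one of $a$, $b$, $ea$, none of which refers to addition except through $e = 1+1$, whose role is absorbed once the multiplicative datum $eR$ with its ordering is specified.
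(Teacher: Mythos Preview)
Your proof is correct and follows essentially the same route as the paper's: use axiom \eqref{3.3} for the case $ea=eb$, use axiom \eqref{3.4} to get $a+b\in\{a,b\}$ when $ea\neq eb$, and then apply $\nu$ to distinguish the two alternatives. You additionally spell out the degenerate case $\mathcal G=\emptyset$ (which the paper's proof leaves implicit), but otherwise the arguments coincide.
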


  \begin{proof}
  We may assume that $ea\ge eb.$ If $ea=eb,$ axiom \eqref{3.3}
  tells us that $a+b=ea.$ Assume now that $ea>eb.$ By definition
  of the ordering on $eR$ (cf. \S1), we have
  $$e(a+b)=ea+eb=ea.$$
  By axiom \eqref{3.4}, $a+b=a$ or $a+b=b.$

Suppose that $a+b=b.$ Then $e(a+b)=eb.$ Since $ea\ne eb,$ this is
a contradiction. We conclude that $a+b=a.$
  \end{proof}

  From now on, \bfem{we always assume that our semirings are
  commutative}.

  \begin{remark}\label{rem3.12}
  If $R$ is a supertropical semiring, the ghost map
  $\nu_R: R\to eR,$\ $x\mapsto ex$ is a strict \m-valuation.
  Indeed, the axioms V1--V3 and V5 from \S2 are clearly valid for
  $\nu_R.$\end{remark}

  Thus, every supertropical semiring has a natural built-in strict
  \m-valuation.

There are important cases where $\nu_R$ is even a valuation
(cf.~Definition \ref{defn2.3}), as we explicate now.

\begin{prop}\label{prop3.12} Assume that $R$ is a supertropical
semiring and $\mathcal T(R)$ is closed under multiplication. Then
the submonoid $G:=e\mathcal T(R)$ of $\mathcal G(R)$ is
cancellative. (N.B. We have $e\mathcal T(R)\subset\mathcal G(R)$
by Remark \ref{remark3.7}.)
\end{prop}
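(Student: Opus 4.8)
The plan is to verify the cancellation law for $G=e\mathcal{T}(R)$ directly, using bipotency in the form of Theorem~\ref{thm3.10} together with the ghost axiom \eqref{3.3}. Every element of $G$ has the shape $et$ with $t\in\mathcal{T}(R)$, and since $e^2=e$ and $\nu_R$ is multiplicative one has $(et_1)(et_3)=et_1t_3=\nu_R(t_1t_3)$. Thus the statement to prove is: if $t_1,t_2,t_3\in\mathcal{T}(R)$ and $\nu_R(t_1t_3)=\nu_R(t_2t_3)$, then $\nu_R(t_1)=\nu_R(t_2)$. (I would also note in passing that $G$ really is a submonoid of $\mathcal{G}(R)$: it is closed under multiplication by hypothesis, it contains $e=\nu_R(1)$ since $1\in\mathcal{T}(R)$ whenever $\mathcal{T}(R)\neq\emptyset$, and it sits inside $\mathcal{G}(R)$ by Remark~\ref{remark3.7}.)

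First I would record the one place the hypothesis is used: since $\mathcal{T}(R)$ is closed under multiplication, the products $t_1t_3$ and $t_2t_3$ again lie in $\mathcal{T}(R)$, hence are nonzero and, crucially, do not lie in $Re$. Now suppose, for contradiction, that $et_1\neq et_2$. Since $Re=\mathcal{G}(R)_0$ is a bipotent semidomain (Remark~\ref{remark3.9}), it is totally ordered, so after interchanging $t_1$ and $t_2$ if necessary I may assume $et_1>et_2$. Then Theorem~\ref{thm3.10} (the case $ea>eb$) gives $t_1+t_2=t_1$, and multiplying by $t_3$ and distributing yields $t_1t_3+t_2t_3=t_1t_3$.

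On the other hand, the hypothesis $\nu_R(t_1t_3)=\nu_R(t_2t_3)$ lets me apply axiom \eqref{3.3} to the pair $(t_1t_3,t_2t_3)$, giving $t_1t_3+t_2t_3=\nu_R(t_1t_3)=et_1t_3\in Re$. Comparing the two computations of $t_1t_3+t_2t_3$ forces $t_1t_3=et_1t_3\in Re$, contradicting $t_1t_3\in\mathcal{T}(R)=R\setminus Re$. Hence $et_1=et_2$, which is precisely cancellativity of $G$. The proof has no genuine obstacle; the only point that must be handled with care is that it is exactly the closure of $\mathcal{T}(R)$ under multiplication which lets axiom \eqref{3.3} collapse $t_1t_3+t_2t_3$ to a ghost, after which Theorem~\ref{thm3.10} produces the contradiction.
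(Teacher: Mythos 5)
Your proof is correct and follows essentially the same route as the paper: assume $et_1\neq et_2$, use Theorem \ref{thm3.10} to get $t_1+t_2$ equal to the $\nu$-larger term, multiply by $t_3$, and then invoke the equal-ghost case (axiom \eqref{3.3}) to force a tangible product to coincide with its ghost, a contradiction. The only difference from the paper's argument is the inessential choice of which element is assumed $\nu$-larger.
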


\begin{proof} Let $a,b,c\in\mathcal T(R)$ be given with
$(ea)(ec)=(eb)(ec),$ i.e., $eac=ebc.$ Suppose that $ea\ne eb,$ say
$ea<eb.$ Then \thmref{thm3.10} tells us that $a+b=b $  and
$ac+bc=ebc.$ By assumption, $bc\in\mathcal T(R)$; hence $bc\ne
ebc.$ But the first equation gives $ac+bc=bc,$ a contradiction.
Thus $ea=eb.$\end{proof}

In the situation of this proposition we may omit the part
$\mathcal G(R)\setminus G$, consisting of ``useless" ghosts, in
the semiring $R$, and then obtain a ``supertropical domain" $U:=
\mathcal T(R)\cup G\cup\{0\},$ as defined below, whose ghost map
$\nu_U:= U\to G\cup\{0\}$ is a \emph{surjective strict valuation}.

  \begin{defn}\label{defn3.13}
  Let $M$ be a bipotent semiring and $R$ a supertropical semiring.

\begin{enumerate}
    \item[a)]
    We say that the semiring $M$ is  \bfem{cancellative}
    if for any $x,y,z \in M$
    $$ xz = yz, \ z \neq 0 \ \Rightarrow \ x = y.$$
This means that $M$ is a bipotent semidomain (cf.~Definition
\ref{defn1.4}) and the multiplicative monoid $M \setminus \{ 0 \}$
is cancellative.

   \item[b)]
   We call $R$ a \bfem{supertropical predomain}, if
  $\mathcal T(R)=R\setminus eR$ is not empty (i.e., $e\ne1)$ and is closed
  under multiplication, and moreover  $eR$ is a cancellative bipotent
  semidomain.

 \item[c)]
  We call $R$ a \bfem{supertropical domain}, if
  $  \mathcal T(R)$ is not empty and is closed under
  multiplication, and $R$ maps $\mathcal T(R)$ onto
  $\mathcal
  G(R).$
  \end{enumerate}
  \end{defn}

Notice that the last condition in Definition \ref{defn3.13}.c
implies that $\mathcal G(R)$ is a cancellative monoid
(\propref{prop3.12}). Thus a supertropical domain is a
supertropical predomain.

Looking again at \thmref{thm3.10}, we see that a way is opened up
to construct supertropical predomains and domains. First notice
that the theorem implies the following

 \begin{remark}\label{rem3.13}
If $R$ is a supertropical predomain, we have for every $a\in
\mathcal T(R)$ and $x\in \mathcal G(R)$ the multiplication rule
$$ax=v(a)x$$
with $v:=\nu_R\mid \mathcal T(R).$ Thus the multiplication on
$$R=\mathcal T(R)\ \dot\cup\ \mathcal G(R)\ \dot\cup\ \{0\}$$
is completely determined by the triple $(\mathcal T(R),\mathcal
G(R),v).$ We write $v=v_R.$
  \end{remark}

  \begin{construction}\label{constr3.14}
  Conversely, let a triple $(\mathcal T,\mathcal G,v)$ be given with $\mathcal T$ a monoid,
  $\mathcal G$ an ordered cancellative
   monoid and $v:\mathcal T\to \mathcal G$ a monoid homomorphism. We
  define a semiring $R$ as follows. As a set
  $$R=\mathcal T\ \dot\cup\  \mathcal G\ \dot\cup\ \{0\}.$$
  The multiplication on $R$ will extend the given multiplications
  on $\mathcal T$ and $\mathcal G.$ If $a\in \mathcal T,$\ $x\in \mathcal G,$ we decree that
  $$a\cdot x=x\cdot a:=v(a)x.$$
  Finally, $0\cdot z=z\cdot 0:=0\quad\text{for all}\quad z\in R.$

  The addition on $R$ extends the addition on $\mathcal G\cup\{0\}$ as
  the bipotent semiring corresponding to the ordered monoid $\mathcal G,$
  as explained in \S$1$. For $x,y\in \mathcal T$ we decree
  $$x+y:=\begin{cases} x&\ \text{if}\quad v(x)>v(y),\\
  y&\ \text{if}\quad v(x)<v(y),\\
  v(x)&\ \text{if}\quad  v(x)=v(y).\end{cases}$$
  Finally, for $x\in \mathcal T$ and $y\in \mathcal G\cup\{0\}$
  $$x+y=y+x:=\begin{cases} x&\  \text{if}\quad v(x)> y, \\
  y&\ \text{if}\quad v(x)\le y.\end{cases}$$

  It now can be checked in a straightforward way\footnote{Alternatively consult \cite[\S 3]{IKR} (as soon as available),
   where a
  detailed proof of a more general statement is given.} that $R$ is a
  supertropical predomain with $\mathcal T(R)=\mathcal T,$\ $\mathcal G(R)= \mathcal G,$\ $v_R=v.$
  Thus we have gained a description of all supertropical
  predomains $R$ by triples $(\mathcal T,\mathcal G,v)$ as above. We write
 $$
  R=\STR(\mathcal T,\mathcal G,v)$$
  \{$\STR$ = ``supertropical"\}. Notice that in this semiring $R$
  every pair $(x,y)\in R^2$ is bipotent except the pairs $(a,b)$
  with $a\in \mathcal T,$ \ $b\in \mathcal T$ and $v(a)=v(b).$ If
  $v$ is onto, then $R$ is a supertropical domain.
  \end{construction}

  \begin{defn}\label{defn3.15}
  A semiring $R$ is called a \bfem{supertropical semifield}, if
  $R$ is a supertropical domain, and every $x\in \mathcal T(R)$ is
  invertible; hence both $\mathcal T(R)$ and $\mathcal G(R)$ are groups under
  multiplication.\end{defn}

  We write down primordial examples of supertropical domains and
  semifields (cf. \cite{I}, \cite{IzhakianRowen2007SuperTropical}). Other examples will come
  up in \S$4$.

  \begin{examples}\label{examps3.16} Let $\mathcal G$ be an ordered cancellative monoid.
  This given us the supertropical domain (cf.~Construction \ref{constr3.14})
$$D(\mathcal G):=\STR(\mathcal G, \mathcal G,\id_{\mathcal G}).$$
$D(\mathcal G)$ is a supertropical semifield iff $\mathcal G$ is
an ordered abelian group.

We come closer to the objects and notations of usual tropical
algebra if we take here for $\mathcal G$ ordered monoids in
\bfem{additive} notation, $\mathcal G=(\mathcal G,+),$ e.g.,
$\mathcal G=\mathbb R,$ $\mathbb R_{>0},$ $\mathbb N,$ $\mathbb
Z,$ $\mathbb Q$ with the usual addition. $D(\mathcal G)$ contains
the set~$\mathcal G.$ For every $a\in  \mathcal G$ there is an
element $a^\nu$ in $D(\mathcal G)$ (read ``a-ghost"), and
$$\mathcal G^\nu:=\{a^\nu \ | \ a\in \mathcal G\}$$
is a copy of the additive monoid $\mathcal G$ disjoint from
$\mathcal G.$ The zero element of the semiring $D(\mathcal G)$ is
now written $-\infty.$ Thus
$$D(\mathcal G)=\mathcal G\ \dot  \cup\ \mathcal G^\nu\dot\cup\ \{-\infty\}.$$
Denoting addition and multiplication of the semiring $D\mathcal (
G)$ by $\oplus$ and $\odot,$ we have the following rules. For any
$x\in D(\mathcal G),$\ $a\in \mathcal G,$\ $b\in \mathcal G,$
\begin{align*}
 -\iy\oplus x&=x\oplus-\infty=x,\\
 a\oplus b& =\max(a,b),\quad\text{if}\ a\ne b,\\
 a\oplus a&=a^\nu,\\
a^\nu\oplus b^\nu&=\max(a,b)^\nu,\\
a\oplus b^\nu&=a,\quad \text{if}\ a >b,\\
a\oplus b^\nu&=b^\nu,\quad\text{if}\ a \le b,\\
-\infty\odot x&=x\odot-\infty=-\infty,\\
a\odot b&=a+b,\\
a^\nu\odot b&=a\odot b^\nu=a^\nu\odot b^\nu=(a+b)^\nu.
\end{align*}
  \end{examples}

  In the case $\mathcal G=(\mathbb R,+)$ these rules can already be found
  in \cite{I}. There also motivation  is
  given for their use in tropical algebra and tropical geometry.

  We now only say that the semiring $D(\mathcal G)$ associated to an
  additive ordered cancellative
   monoid $\mathcal G$ should be compared with the max-plus
  algebra $T(\mathcal G)=\mathcal G\cup\{-\infty\}$ introduced in \S1. The ghost
  ideal $ \mathcal G^\nu\cup\{-\infty\}$ of $D( \mathcal G)$ is a copy of $T(\mathcal G).$

  \section{Supervaluations}\label{sec:4}

  In this section $R$ is always a (commutative) semiring. Usually
  the
  letters $U,V$ denote supertropical (commutative) semirings. If
  $U$ is any such semiring, the idempotent $e_U=1_U+1_U$ will be
  often simply denoted by the letter ``$e$", regardless of which
  supertropical semiring is under consideration (as we write
  $0_U=0,$\ $1_U=1).$


  \begin{defn}\label{defn4.1} $ $
  \begin{enumerate}
  \item[a)]

     A \bfem{supervaluation} on $R$ is a map $\varphi: R\to U$
  from $R$ to a supertropical semiring $U$ with the following
  properties.
 \begin{alignat*}{2}
&SV1:\ &&\varphi(0)=0,\\
&SV2:\ &&\varphi(1)=1,\\
&SV3:\ &&\forall a,b\in R: \varphi(ab)=\varphi(a)\varphi(b),\\
&SV4:\ &&\forall a,b\in R: e\varphi(a+b)\le
e(\varphi(a)+\varphi(b))\quad
[=\max(e\varphi(a),e\varphi(b))].\end{alignat*}

\item[b)]
 If $\varphi: R\to U$ is a supervaluation, then the map
$$v: R\to eU,\qquad v(a):=e\varphi(a)$$
is clearly an \m-valuation. We denote this \m-valuation $v$ by
$e_U\varphi $ (or simply by $e\varphi )$, and we say that
$\varphi$ \bfem{covers} the \m-valuation $e_U\varphi=v.$


\item[c)]
 We say that a supervaluation $\varphi: R\to U$ is
\bfem{tangible}, if $\varphi(R)\subset \mathcal T(U)\cup\{0\}$,
and we say that $\varphi $ is   \bfem{ghost}  if
$\varphi(R)\subset eU.$
\end{enumerate}
\end{defn}
\noindent {N.B.}\ A ghost supervaluation $\varphi: R\to U$ is
nothing other than an \m-valuation, after replacing the target $U$
by $eU.$


\begin{prop}\label{prop4.2}
Assume that $\varphi:R\to U$ is a supervaluation and $v: R\to
e_UU=: M$ is the \m-valuation $e_U\varphi$ covered by $\varphi.$
Then
$$U':=\varphi(R)\cup e\varphi(R) $$
is a subsemiring of $U.$ The semiring $U'$ is again supertropical
and $e_{U'}=e_U  (=e).$\end{prop}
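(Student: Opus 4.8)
The plan is to verify by hand that $U'$ is closed under the two operations and contains $0$ and $1$, then to invoke Remarks \ref{remark3.9}(iii) (every subsemiring of a supertropical semiring is again supertropical) for the second assertion, and finally to read off $e_{U'}$ from the identity $e = 1+1$.

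First I would record that $0 = \varphi(0)$ and $1 = \varphi(1)$ lie in $\varphi(R) \subseteq U'$ by SV1 and SV2. For closure under multiplication, every element of $U'$ has the form $\varphi(a)$ or $e\varphi(a)$ with $a \in R$; using SV3 together with the fact that $e$ is a central idempotent, one computes $\varphi(a)\varphi(b) = \varphi(ab)$, $\varphi(a)\cdot e\varphi(b) = e\varphi(ab)$, and $e\varphi(a)\cdot e\varphi(b) = e\varphi(ab)$, so each product again lies in $\varphi(R) \cup e\varphi(R) = U'$.

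The slightly less automatic point --- and the one I would treat most carefully --- is closure under addition. Here I would appeal to \thmref{thm3.10}: for any $x,y$ in a supertropical semiring one has $x + y \in \{x, y, ex\}$ (with $x+y = 0$ in the degenerate case $\mathcal G(U) = \emptyset$, which is also covered since $0 \in U'$). Thus for $x,y \in U'$ it suffices to check that $ex \in U'$. If $x = \varphi(a)$, then $ex = e\varphi(a) \in e\varphi(R) \subseteq U'$; if $x = e\varphi(a)$, then $ex = e^2\varphi(a) = e\varphi(a) = x \in U'$. Hence $x + y \in U'$ in every case, so $U'$ is a subsemiring of $U$.

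Finally, $U'$ is supertropical by Remarks \ref{remark3.9}(iii), and since $1_{U'} = 1_U$ while the addition of $U'$ is the restriction of that of $U$, we obtain $e_{U'} = 1_{U'} + 1_{U'} = 1_U + 1_U = e_U$. I expect the only real subtlety to be remembering to handle the case $e = 0$ (equivalently $\mathcal G(U) = \emptyset$) in the addition step, and \thmref{thm3.10} already takes care of it; everything else is a routine check.
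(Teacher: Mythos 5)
Your proof is correct and follows essentially the same route as the paper: direct verification of closure using the description of addition in a supertropical semiring (Theorem~\ref{thm3.10}), followed by Remark~\ref{remark3.9}(iii) for supertropicality. The only cosmetic difference is that you package the additive check as the uniform observation $x+y\in\{x,y,ex\}$ with $ex\in U'$, where the paper instead splits into the cases $v(a)<v(b)$ and $v(a)=v(b)$ (after noting that $v(R)$ is closed under addition); your explicit remark that $e_{U'}=1+1=e_U$ is a welcome addition the paper leaves implicit.
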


\begin{proof} The set
 $v(R)$ is a multiplicative submonoid  of the bipotent semiring
$M$; hence is itself a bipotent semiring. In particular, $v(R)$ is
closed under addition. If $a,b\in R$ are given with $v(a)\le
v(b),$ then either $v(a)< v(b),$ in which case
$$a+b=b,\quad v(a)+b=b,\quad a+v(b)=v(b),$$
or $v(a)=v(b), $ in which case
$$a+b=v(a)+b=a+v(b)=v(a).$$
This proves that $U'+U'\subset U'.$ Clearly $0\in U',$\ $1\in U'$
and $U'\cdot U'\subset U'.$ Thus $U'$ is a subsemiring of $U.$ As
stated above (Remark \ref{remark3.9}.iii), every subsemiring of a
supertropical semiring is again supertropical. Thus $U'$ is
supertropical.
\end{proof}

\begin{defn}\label{defn4.3}
We say that the supervaluation $\varphi: R\to U$ is
\bfem{surjective} if $U'=U.$ We say that $\varphi$ is
\bfem{tangibly surjective} if $\varphi(R)\supset  \mathcal
T(U).$\end{defn}

\begin{remark}\label{rem4.4} If $\varphi: R\to U$ is any
supervaluation, then, replacing $U$ by $U'=\varphi(R)\cup
e\varphi(R),$ we obtain a surjective supervaluation. If we only
replace $U$ by $\varphi(R)\cup (eU)$, which is again a subsemiring
of $U,$ we obtain a tangibly surjective
supervaluation.\end{remark}

Thus, whenever necessary we may retreat to tangibly surjective or
even surjective supervaluations without loss of generality.

Recall that an \m-valuation $v: R\to M$ is called a
 valuation, if the bipotent semiring $M$ is   cancellative
(cf.~Definition 2.3, Definition 3.14.a). Every
 valuation can be covered by a tangible supervaluation, as the
following easy but important construction shows.

\begin{example}\label{examp4.5} Let $v: R\to M$ be a
valuation, and let $\mathfrak q :=v^{-1}(0)$ denote the support
of~$v.$ We then have a monoid homomorphism
$$R\setminus \mathfrak q \to M\setminus\{0\},\quad a\mapsto v(a),$$
which we denote again by $v.$ Let
$$U:=\STR(R\setminus \mathfrak q ,M\setminus\{0\},v),$$
the supertropical predomain given by the triple
$(R\setminus\mathfrak q ,M\setminus \{0\},v),$ as explained in
Construction~\ref{constr3.14}. Thus, as a set,
$$U=(R\setminus\mathfrak q )\ \dot\cup\ M.$$
We have $e=1_M,$ $e\cdot a=v(a)$ for  $a\in R\setminus \mathfrak q
.$ The multiplication on $U$ restricts to the given
multiplications on $ R\setminus \mathfrak q$ and on $M$, and
$a\cdot x=x\cdot a=v(a)x$ for $ a\in  R\setminus \mathfrak q,$
$x\in M.$ The addition on $U$ is determined by $e$ and the
multiplication in the usual way (cf. \thmref{thm3.10}). In
particular, for  $a,b\in R\setminus\mathfrak q ,$ we have
$$a+b=\begin{cases} a &\quad\text{if}\ v(a)>v(b),\\
 b &\quad\text{if}\ v(a)<v(b),\\
  v(a) &\quad\text{if}\ v(a)=v(b).\end{cases}$$

  Now define a map $\varphi: R\to U$ by
  $$\varphi(a):=\begin{cases}  a &\quad\text{if}\ a\in R\setminus\mathfrak q, \\
  0&\quad\text{if}\ a\in  \mathfrak q.\end{cases}$$
  One checks immediately that $\varphi$ obeys the rules SV1--SV3.
  If $a\in R\setminus\mathfrak     q ,$ then
  $$e_U\varphi(a)=1_M\cdot v(a)=v(a),$$
  and for $x\in\mathfrak q ,$ we have
  $$e_U\varphi(a)=e_U\cdot 0=0=v(a)$$
  also. Thus SV4 holds, and $\varphi$ is a supervaluation covering
  $v.$

  By construction $\varphi$ is tangible and tangibly surjective.
  If $v$ is surjective then $\varphi$ is surjective.

\end{example}

\begin{defn}\label{defn4.6} We denote the supertropical ring just
constructed by $U(v)$ and the supervaluation $\varphi$ just
constructed by $\varphi_v.$ Later we will call $\varphi_v: R\to
U(v)$ \bfem{the initial cover  of}~$v,$ cf.~Definition
\ref{defn5.15}.\end{defn}

Notice that $U(v)$ is a supertropical domain iff $v$ is
surjective, and that in this case the supervaluation $\varphi_v$
is surjective.

\begin{remark}\label{rem4.7}
The supertropical predomain $U(v)$ just constructed deviates
strongly in its nature from the supertopical domain $D(\mathcal
G)$ for $\mathcal G$ an ordered monoid studied in
Examples~\ref{examps3.16}. While for $U=D(\mathcal G)$ the
restriction
$$\nu_U\mid \mathcal T(U): \mathcal T(U)\to \mathcal G(U)$$ of the ghost
map $\nu_U$ is bijective, for $U=U(v)$ this map usually has big
fibers.\end{remark}

\section{Dominance and transmissions}\label{sec:5}

As before now all semirings are assumed to be commutative. $R$ is
any semiring, and $ U,V$ are bipotent semirings.

\begin{defn}\label{defn5.1} If $\varphi: R\to U$ and $\psi: R\to
V$ are supervaluations, we say that $\varphi$ \bfem{dominates}
$\psi$, and write $\varphi\ge \psi,$ if for any $a,b\in R$ the
following holds.
\begin{alignat*}{3}
&D1.\quad && \varphi(a)=\varphi(b)&&\Rightarrow \psi(a)=\psi(b),\\
&D2.\quad && e\varphi(a)\le e\varphi(b)&&\Rightarrow e\psi(a)\le e\psi(b),\\
&D3.\quad && \quad\varphi(a)\in eU &&\Rightarrow \psi(a)\in eV.
\end{alignat*}
Notice that D3 can be also phrased as follows:
$$\varphi(a)=e\varphi(a)\quad\Rightarrow \quad  \psi(a)=e\psi(a).$$
\end{defn}

\begin{lem}\label{lem5.2} Let $\varphi: R\to U$ and $\psi: R\to V$
be supervaluations. Assume that $\varphi$ dominates $\psi$, and
also (without essential loss of generality) that $\varphi$ is
surjective. Then there exists a unique map $\alpha: U\to V$ with
$\psi=\alpha\circ \varphi$ and
$$\forall x\in U: \alpha(e_Ux)=e_V\alpha(x)$$
(i.e., $\alpha\circ\nu _U=\nu_V\circ\alpha).$
\end{lem}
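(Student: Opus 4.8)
The plan is to construct $\alpha$ fiberwise over the values of $\varphi$ and then check it is well-defined using the dominance axioms. Since $\varphi$ is surjective, every element of $U$ is of the form $\varphi(a)$ for some $a \in R$, so there is at most one map $\alpha$ satisfying $\psi = \alpha \circ \varphi$; this forces the definition $\alpha(\varphi(a)) := \psi(a)$ and gives uniqueness for free. The substance is existence, i.e.\ showing this recipe is consistent: if $\varphi(a) = \varphi(b)$ then we must have $\psi(a) = \psi(b)$, which is exactly axiom D1. So $\alpha: U \to V$ is well-defined as a map of sets with $\psi = \alpha \circ \varphi$.

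Next I would verify the compatibility with the ghost maps. First one needs that $\varphi(a) = \varphi(b)$ implies $e_U\varphi(a) = e_U\varphi(b)$ (clear, multiply by $e_U$) and hence by D1 that $\psi(a) = \psi(b)$, so $e_V\psi(a) = e_V\psi(b)$; this shows the assignment $e_U\varphi(a) \mapsto e_V\psi(a)$ is at least consistent on elements of $eU$ of that special form. But to define $\alpha$ on all of $eU$ I should note that since $\varphi$ is surjective, $\varphi(R) \cup e\varphi(R) = U$, so every ghost element of $U$ is either $\varphi(a)$ for some $a$ (the ghost-valued case) or $e_U\varphi(a)$ for some $a$; in the latter case I need that $e_U\varphi(a) = e_U\varphi(b)$ forces $e_V\psi(a) = e_V\psi(b)$. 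This follows from D2 applied symmetrically: $e_U\varphi(a) \le e_U\varphi(b)$ and $e_U\varphi(b)\le e_U\varphi(a)$ give $e_V\psi(a)\le e_V\psi(b)$ and the reverse, hence equality. Combining, $\alpha$ is well-defined on all of $U$, and by construction $\alpha(e_U\varphi(a)) = e_V\psi(a) = e_V\psi(a)$. To get $\alpha(e_U x) = e_V \alpha(x)$ for general $x = \varphi(a)$: the left side is $\alpha(e_U\varphi(a)) = e_V\psi(a)$, while the right side is $e_V\alpha(\varphi(a)) = e_V\psi(a)$; these agree. (One subtlety: I must make sure the two possible descriptions of a ghost element $g \in U$ as $\varphi(a)$ versus $e_U\varphi(b)$ give the same value of $\alpha$; but if $\varphi(a) = g = e_U\varphi(b)$ then $\varphi(a) \in eU$, so by D3 $\psi(a) \in eV$, i.e.\ $\psi(a) = e_V\psi(a)$, and also $e_U\varphi(a) = \varphi(a) = e_U\varphi(b)$, so by the D2 argument $e_V\psi(a) = e_V\psi(b)$; hence $\psi(a) = e_V\psi(a) = e_V\psi(b)$, which is the value assigned via the second description. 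Consistent.)

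The main obstacle I anticipate is precisely this last well-definedness bookkeeping: because $U$ is covered by $\varphi(R)$ together with $e\varphi(R)$ but these two sets overlap (on ghost elements that happen to be tangible images — impossible — or more relevantly on elements $\varphi(a)$ that are themselves ghost), one has to check that the "obvious" definition $\alpha(\varphi(a)) = \psi(a)$ really does extend to and agree with $\alpha(e_U\varphi(a)) = e_V\psi(a)$ on the overlap, and this is exactly where axiom D3 (rephrased as $\varphi(a) = e_U\varphi(a) \Rightarrow \psi(a) = e_V\psi(a)$) is needed — D1 and D2 alone do not suffice. Once all three axioms D1, D2, D3 are in play the verification is routine. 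Note that at this stage $\alpha$ is only a map of sets intertwining $\nu_U$ and $\nu_V$; that it is in fact multiplicative and additive (a "transmission" in the sense of \S5) is presumably the content of a subsequent result and is not claimed here.
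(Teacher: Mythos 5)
Your existence argument is essentially the paper's proof: define $\alpha$ on $\varphi(R)$ by $\varphi(a)\mapsto\psi(a)$ (well-defined by D1), on $e\varphi(R)$ by $e_U\varphi(a)\mapsto e_V\psi(a)$ (well-defined by D2, in fact only its weak consequence $e\varphi(a)=e\varphi(b)\Rightarrow e\psi(a)=e\psi(b)$ is needed, as the paper remarks), and glue on the overlap $\varphi(R)\cap e\varphi(R)$ using D3 exactly as you do. That part is correct and matches the paper.

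One slip to fix: your opening sentence asserts that surjectivity of $\varphi$ means every element of $U$ is of the form $\varphi(a)$, and deduces that $\psi=\alpha\circ\varphi$ alone forces $\alpha$, ``uniqueness for free.'' In this paper (Definition \ref{defn4.3}) surjectivity means $U=\varphi(R)\cup e\varphi(R)$, which you in fact use correctly later; so $\psi=\alpha\circ\varphi$ only determines $\alpha$ on $\varphi(R)$, and a priori $\alpha$ could be arbitrary on $e\varphi(R)\setminus\varphi(R)$. Uniqueness requires the second condition in the lemma: for $x=\varphi(a)$ it forces $\alpha(e_U\varphi(a))=e_V\alpha(\varphi(a))=e_V\psi(a)$, which pins $\alpha$ down on all of $e\varphi(R)$ and hence on $U$. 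Similarly, when verifying $\alpha(e_Ux)=e_V\alpha(x)$ you treat only $x\in\varphi(R)$; add the (trivial) case $x=e_U\varphi(a)\in e\varphi(R)$, where $e_Ux=x$ and $\alpha(x)=e_V\psi(a)\in e_VV$, so both sides equal $\alpha(x)$. With these two small repairs your argument is complete and coincides with the paper's.
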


\begin{proof} By D1 and D2 we have a unique well-defined map
$\beta: \varphi(R)\to\psi(R)$ with $\beta(\varphi(a))=\psi(a)$ for
all $a\in R$ and a unique well-defined map $\gamma: e\varphi(R)\to
e\psi(R)$ with $\gamma(e\varphi(a))=e\psi(a)$ for all $a\in R. $
Now $U=\varphi(R)\cup e\varphi(R)$, since $\varphi$ is assumed to
be surjective. Suppose that $x\in\varphi(R)\cap e\varphi(R).$ Then
$x=\varphi(a)$ for some $a\in R,$ and $x=ex=e\varphi(a).$ By axiom
D3 we conclude that $\psi(a)=e\psi(a).$ Thus $\beta(x)=\gamma(x).$
This proves that we have a unique well-defined map $\alpha: U\to
V$ with $\alpha(x)=\beta(x)$ for $x\in\varphi(R)$ and
$\alpha(y)=\gamma(y)$ for $y\in e\varphi (R).$ We have
$\alpha(\varphi(a))=\psi(a),$ i.e., $\psi=\alpha\circ\varphi.$
Moreover, for any $a\in R,$
$\alpha(e_U\varphi(a))=\gamma(e_U\varphi(a))=e_V\psi(a).$\end{proof}

We record that in this proof we did not use the full strength of
D2 but only the weaker rule that $e\varphi(a)=e\varphi(b)$ implies
$e\psi(a)=e\psi(b).$

\begin{defn}\label{defn5.3} Assume that $U$ and $V$ are
supertropical semirings.

\begin{enumerate}\item[a)] If $\alpha$ is a map from $U$ to $V$ with
$\alpha(eU)\subset eV,$ we say that $\alpha$ \bfem{covers} the map
$\gamma: eU\to eV$ obtained from $\alpha$ by restriction, and we
write $\gamma=\alpha^\nu.$ We also say that $\gamma$ is the
\bfem{ghost part} of $\alpha.$

\item[b)] Assume that $\varphi: R\to U$ is a surjective
supervaluation and $\psi: R\to V$ is a supervaluation dominated by
$\varphi.$ Then we call the map $\alpha$ occurring in
\lemref{lem5.2}, which is clearly unique, the \bfem{transmission
from} $\varphi$ \bfem{to} $\psi,$ and we denote this map by
$\alpha_{\psi,\varphi}.$ Clearly, $\alpha_{\psi,\varphi}$ covers
the map $\gamma_{w,v}$ connecting the surjective \m-valuation $v:=
e\varphi: R\to eU$ to the \m-valuation $w:=e\psi: R\to eV$
introduced in Definition \ref{defn2.11}.
\end{enumerate}
 \end{defn}

 \begin{thm}\label{thm5.4}
 Let $\varphi: R\to U$ be a surjective supervaluation and $\psi:
 R\to V$ a supervaluation dominated by $\varphi.$ The transmission
 $\alpha:=\alpha_{\psi,\varphi}$ obeys the following rules:
\begin{alignat*}{2}
&TM1: \quad&& \alpha(0)=0,\\
&TM2: \quad &&\alpha(1)=1,\\
 &TM3: \quad &&\forall x,y\in U:\quad
\alpha(xy)=\alpha(x)\alpha(y),\\
&TM4:\quad&&\alpha(e_U)=e_V,\\
&TM5:\quad &&\forall x,y\in eU: \quad
\alpha(x+y)=\alpha(x)+\alpha(y).
 \end{alignat*}
 \end{thm}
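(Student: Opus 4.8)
The plan is to verify the five rules TM1--TM5 directly from the construction of $\alpha = \alpha_{\psi,\varphi}$ in \lemref{lem5.2}, exploiting the fact that every element of $U$ is of the form $\varphi(a)$ or $e\varphi(a)$ for some $a \in R$ (surjectivity of $\varphi$), so that $\alpha$ is pinned down by $\alpha(\varphi(a)) = \psi(a)$ and $\alpha(e\varphi(a)) = e\psi(a)$. Thus each rule for $\alpha$ should be reduced to the corresponding known property of the supervaluations $\varphi$ and $\psi$.

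First I would dispatch TM1, TM2, TM4. For TM1: $\alpha(0) = \alpha(\varphi(0)) = \psi(0) = 0$ by SV1 applied to $\varphi$ and $\psi$. For TM2: $\alpha(1) = \alpha(\varphi(1)) = \psi(1) = 1$ by SV2. For TM4: write $e_U = \varphi(1) + \varphi(1)$; since $\varphi(1) = 1_U$ and $1_U + 1_U = e_U$, and similarly in $V$, one has $\alpha(e_U) = \alpha(e_U \varphi(1)) = e_V \psi(1) = e_V$, using the defining property $\alpha \circ \nu_U = \nu_V \circ \alpha$ from \lemref{lem5.2} together with TM2.

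Next, TM3 (multiplicativity). Take $x, y \in U$. By surjectivity each of $x, y$ is either tangible-type $\varphi(a)$ or ghost-type $e\varphi(a)$; using $\alpha \circ \nu_U = \nu_V \circ \alpha$ it suffices to treat $x = \varphi(a)$, $y = \varphi(b)$, since the ghost cases follow by applying $\nu$. Then $xy = \varphi(a)\varphi(b) = \varphi(ab)$ by SV3 for $\varphi$, so $\alpha(xy) = \alpha(\varphi(ab)) = \psi(ab) = \psi(a)\psi(b) = \alpha(x)\alpha(y)$ by SV3 for $\psi$. The mixed case $x = \varphi(a)$, $y = e\varphi(b)$: here $xy = e\varphi(a)\varphi(b) = e\varphi(ab)$ (using \eqref{3.2}), and $\alpha(e\varphi(ab)) = e\psi(ab) = e\psi(a)\psi(b) = \alpha(\varphi(a))\alpha(e\varphi(b))$, again via \eqref{3.2} in $V$. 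The purely ghost case is analogous.

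Finally TM5 (additivity on $eU$) is the step I expect to be the main obstacle, since $\varphi$ is only required to be \emph{sub}multiplicative-type with respect to addition (SV4 gives an inequality, not an equality). The key observation is that on the ghost ideal $eU$ the addition \emph{is} determined by the ordering (by \remark{remark3.9}.i, $eU$ is a bipotent semidomain), so for ghosts $x = e\varphi(a)$ and $y = e\varphi(b)$ with, say, $x \ge y$ in $eU$ one has $x + y = x$; it then suffices to know that $\alpha^\nu = \gamma_{w,v}$ is order-preserving, which is recorded right after Definition \ref{defn2.11} (it is a homomorphism of bipotent semirings). Concretely: given $x,y \in eU$, WLOG $e\varphi(a) = x \le y = e\varphi(b)$, i.e. $v(a) \le v(b)$; then D2 gives $w(a) \le w(b)$, so $\alpha(x) = e\psi(a) \le e\psi(b) = \alpha(y)$ in $eV$, whence $\alpha(x) + \alpha(y) = \alpha(y)$; on the other hand $x + y = y$ in the bipotent semiring $eU$, so $\alpha(x+y) = \alpha(y) = \alpha(x) + \alpha(y)$. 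One should also handle the edge case $x = 0$ or $y = 0$, which is immediate from TM1. This completes all five rules.
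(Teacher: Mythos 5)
Your proposal is correct and follows essentially the same route as the paper: TM1, TM2, TM4 come straight from the defining equations $\alpha(\varphi(a))=\psi(a)$, $\alpha(e\varphi(a))=e\psi(a)$ of Lemma \ref{lem5.2}, TM3 is verified case by case using $U=\varphi(R)\cup e\varphi(R)$ together with SV3 and \eqref{3.2}, and TM5 is exactly the paper's argument that D2 (in full strength) makes the ghost part order preserving, hence additive on the bipotent semiring $eU$. Your write-up merely spells out the cases the paper leaves as an ``easy check''.
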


 \begin{proof}

 TM1, TM2, and TM4 are obtained from the construction of $\alpha$
in the proof of  \lemref{lem5.2}. This construction tells us also
that $\alpha$ sends  $e U$ to $eV$. Using (again) that $U =
\varphi (R) \cup e\varphi (R)$, we check  easily that $TM3$ holds.
The rule D2 (in its full strength) tells us  that the map $\gamma:
eU \to eV$, obtained from $\alpha$ by restriction, is order
preserving. This is TM5.
\end{proof}

\begin{defn}\label{defn5.5a} If $U$ and $V$ are supertropical
semirings, we call any map $\alpha: U\to V$ which the rules
TM1--TM5, a \bfem{transmissive map} from $U$ to $V.$
\end{defn}


The axioms TM1-TM5 tell us that a transmissive map $\alpha: U \to
V$ is the same thing as a homomorphism from the monoid $(U, \cdot
\ )$ to $(V, \cdot \ )$ which restricts to a semiring homomorphism
from $eU$ to $eV$. It is evident that every homomorphism from the
semiring $U$ to $V$ is a transmissive map, but there exist quite a
few transmissive maps, which are not homomorphisms; cf. \S9 below
and \cite{IKR}.

As a converse to Lemma \ref{lem5.2} we have the following fact.

\begin{prop}\label{prop5.6}
Assume that $\varphi: R\to U$ is a supervaluation and $\alpha:
U\to V$ is a transmissive map from $U$ to a supertropical semiring
$V.$ Then $\alpha\circ \varphi: R\to V$ is again a supervaluation.
If $e \varphi$ is either ``strong" or ``strict", then
$e(\alpha\circ\varphi)$ has the same property.

\end{prop}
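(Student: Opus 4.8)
The plan is to verify that $\alpha \circ \varphi: R \to V$ satisfies the four supervaluation axioms SV1--SV4, and then to handle the ``strong'' and ``strict'' clauses separately. The axioms SV1, SV2, SV3 are immediate: from TM1, TM2, TM3 we get $\alpha(\varphi(0)) = \alpha(0) = 0$, $\alpha(\varphi(1)) = \alpha(1) = 1$, and $\alpha(\varphi(ab)) = \alpha(\varphi(a)\varphi(b)) = \alpha(\varphi(a))\alpha(\varphi(b))$. For SV4 I would first observe that, writing $e = e_V$, the element $e\alpha(\varphi(a+b))$ equals $\alpha(e_U \varphi(a+b))$ by the key property $\alpha \circ \nu_U = \nu_V \circ \alpha$ (equivalently TM4 together with TM3: $e_V \alpha(x) = \alpha(e_U)\alpha(x) = \alpha(e_U x)$). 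So SV4 for $\alpha \circ \varphi$ reduces to showing
$$\alpha(e_U\varphi(a+b)) \le \alpha(e_U\varphi(a)) + \alpha(e_U\varphi(b)).$$
Now SV4 for $\varphi$ gives $e_U\varphi(a+b) \le e_U\varphi(a) + e_U\varphi(b)$ inside the bipotent semiring $e_U U$, and since $e_U U$ is bipotent the right-hand side is $\max(e_U\varphi(a), e_U\varphi(b))$, say equal to $e_U\varphi(b)$. Then $e_U\varphi(a+b) \le e_U\varphi(b)$, i.e. $e_U\varphi(a+b) + e_U\varphi(b) = e_U\varphi(b)$; applying $\alpha$ and using TM5 (additivity of $\alpha$ on $eU$) yields $\alpha(e_U\varphi(a+b)) + \alpha(e_U\varphi(b)) = \alpha(e_U\varphi(b))$, which is exactly $\alpha(e_U\varphi(a+b)) \le \alpha(e_U\varphi(b)) \le \alpha(e_U\varphi(a)) + \alpha(e_U\varphi(b))$. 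Hence SV4 holds and $\alpha \circ \varphi$ is a supervaluation.

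For the final clause, write $v := e_U\varphi: R \to e_U U$ and $w := e_V(\alpha \circ \varphi): R \to e_V V$; the computation above shows $w = \gamma \circ v$ where $\gamma := \alpha^\nu: e_U U \to e_V V$ is the ghost part of $\alpha$, which is a semiring homomorphism (TM4, TM5, and the restriction of TM3). If $v$ is strict, then by Proposition \ref{prop2.12}.b (or directly, since a composite of semiring homomorphisms is a semiring homomorphism) $w = \gamma \circ v$ is again strict. If $v$ is strong, the same proposition shows $w$ is strong, provided $v$ dominates $w$; but $w = \gamma \circ v$ with $\gamma$ order-preserving makes this dominance clear: $v(a) \le v(b)$ implies $\gamma(v(a)) \le \gamma(v(b))$, i.e. $w(a) \le w(b)$. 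So $w$ inherits whichever of the two properties $v$ has.

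The only mild subtlety — and the step I would write out most carefully — is the reduction of SV4 to a statement about the ghost ideal via the identity $e_V\alpha(x) = \alpha(e_U x)$; once that is in place everything else is a routine transcription of the axioms TM1--TM5 and an appeal to bipotency of $e_U U$. There is no serious obstacle here: the proposition is essentially the assertion that the axioms were set up to make it true, so the proof is a matter of bookkeeping. I would keep it short, explicitly checking SV1--SV3 in one line, SV4 in a short paragraph, and disposing of the ``strong''/``strict'' addendum by citing \propref{prop2.12}.b after noting $w = \gamma \circ v$ with $\gamma$ a homomorphism.
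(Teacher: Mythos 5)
Your proposal is correct and follows essentially the same route as the paper: SV1--SV3 come from TM1--TM3, and the key point is the identity $e\,\alpha(\varphi(a))=\alpha(e\varphi(a))$, which shows that the ghost part of $\alpha\circ\varphi$ is $\alpha^\nu\circ(e\varphi)$ with $\alpha^\nu$ an order-preserving semiring homomorphism, from which SV4 and the inheritance of ``strict''/``strong'' follow (the paper cites this as immediate, in the spirit of Proposition \ref{prop2.12}; your explicit unfolding via bipotency and TM5 is just the same argument written out). The only cosmetic remark is that Proposition \ref{prop2.12} is stated for surjective $v$, but since you already have $w=\gamma\circ v$ with $\gamma$ a homomorphism, the inheritance is direct and no surjectivity is needed.
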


\begin{proof} Let $\psi: =\alpha\circ\varphi,$ $v := e \vrp$, $w:= e
\psi$.  Clearly $\psi$ inherits the properties SV1--SV3 from
$\varphi,$ since $\alpha$ obeys TM1--TM3. If $a\in R,$ then, by
TM4,
$$w(a) = e\psi(a)=e(\alpha(\varphi(a)))=\alpha(e\varphi(a)) = \al(v(a));$$
hence $w = \alpha^\nu \circ v.$ Now $\al^\nu : N \to N$ is a
semiring homomorphism, hence order preserving. Thus it is
immediate that $w$ is an \m-valuation,  and $w$ is strict or
strong if $v$  is strict or strong, respectively.
\end{proof}

\begin{remark}\label{rem5.7}
If $\varphi: R\to U$ is a surjective supervaluation
(cf.~Definition \ref{defn4.3}) and $\alpha: U\to V$ is a
surjective transmissive map, then the supervaluation
$\alpha\circ\varphi$ is again surjective. Conversely, if $\varphi:
R\to U$ and $\psi: R\to V$ are surjective supervaluations, and
$\varphi$ dominates $\psi$, then the transmission
$\alpha_{\psi,\varphi}:U\to V$ is a surjective map.\end{remark}

Combining \thmref{thm5.4}, \propref{prop5.6} and this remark, we
read off  the following facts.

\begin{schol}\label{schol5.8} Let $U,V$ be supertropical semirings
and $\varphi: R\to U$ a surjective supervaluation.
\begin{enumerate}
\item[a)] The supervaluations $\psi: R\to V$ dominated by
$\varphi$ correspond uniquely with the transmissive maps $\alpha:
U\to V$ via $\psi=\alpha\circ\varphi,$
$\alpha=\alpha_{\psi,\varphi}.$ \item[b)] If $P$ is one of the
properties ``strict" or  ``strong" and $e \varphi$ has property
$P,$ then $e \psi$ has property $P.$
\item[c)] The supervaluation $\psi$ is surjective iff the map
$\alpha$ is surjective. \item[d)] Given a semiring homomorphism
$\gamma: eU\to eV$, the supervaluation $\psi$ covers the
\m-valuation $\gamma\circ (e\varphi)$ iff $\alpha^\nu=\gamma.$
\end{enumerate}
\end{schol}
 $$\xymatrix{
    & R      \ar@{-->}[dr]^\psi  \ar[dl]_\varphi\\
           U %
            \ar[d]_{\nu_U}     \ar@{-->}[rr]_\alpha    &&      V
       \ar[d]^{\nu_V}
       \\
      eU    \ar[rr]_\gamma   &&        eV
 }$$
 \hfill\quad\qed

 \begin{examp}\label{examp5.9}
 Let $U$ be a supertropical semiring with ghost ideal $M:=eU.$
 Then, as we know, the ghost map $\nu_U:U\to M,$ $x\mapsto
 ex,$ is a strict \m-valuation on the semiring $U$ (Remark
 \ref{rem3.12}). Clearly, the identity map $\id_U:U\to U$ is a
 supervaluation covering $\nu_U.$ Assume now that $\alpha: U\to
 V$ is a transmissive map. Let $\gamma: =\alpha^\nu$ denote the
 homomorphism from $M$ to $N:=eV$ covered by $\alpha.$ Then
 $v:=\gamma\circ\nu_U=\nu_V\circ\alpha$ is  a strict valuation on
 $U$ with values in $N,$ and $\alpha:=\alpha\circ\id_U$ is a
 supervaluation on $U$ covering $v.$ Thus $\alpha $ is the
 transmission from the supervaluation $\id_U: U\to U$ to the
 supervaluation $\alpha:U\to V$ covering $v.$

\end{examp}

The example tells us in particular that every transmissive map is
the transmission between some  supervaluations. \textit{Therefore
we may and will also use the shorter term ``\textbf{transmission}"
for ``transmissive map".}

In general, a transmission does not behave additively; hence is
not a homomorphism. We now record cases where nevertheless some
additivity holds.

\begin{prop}\label{prop5.10}
Let $\alpha:U\to V$ be a transmission and $\gamma:eU\to eV$ denote
the ghost part of $\alpha,$ $\gamma=\alpha^\nu$ (which is a
semiring homomorphism). \begin{enumerate}
\item[i)] If $x,y\in U$
and $ex=ey,$ then $\alpha(x)+\alpha(y)=\alpha(x+y).$

\item[ii)]
If $x,y\in U$ and $\alpha(x)+\alpha(y)$ is tangible, then again
$\alpha(x)+\alpha(y)=\alpha(x+y).$

\item[iii)] If $\gamma$ is
injective, then $\alpha$ is a semiring homomorphism.
\end{enumerate}
\end{prop}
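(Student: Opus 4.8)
The plan is to work directly from the structure theorem for supertropical semirings (Theorem 3.10), which reduces every addition to a comparison of ghosts, together with the transmission axioms TM1--TM5. Throughout, write $\nu_U(x) = ex$ and $\nu_V = \al \circ \nu_U$ on the image, and recall from TM4--TM5 that $\gm = \al^\nu : eU \to eV$ is a semiring homomorphism, hence order-preserving; the crucial extra fact I will keep using is that $\al$ \emph{covers} $\gm$, i.e.\ $\al(ex) = \gm(ex) = e\al(x)$ for all $x \in U$.

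For part (i): assume $ex = ey$. By the ghost axiom \eqref{3.3} applied in $U$ we get $x + y = ex$, so $\al(x+y) = \al(ex) = e\al(x)$. On the other side, since $\al$ covers $\gm$ we have $e\al(x) = \gm(ex) = \gm(ey) = e\al(y)$, so $\al(x)$ and $\al(y)$ have equal ghosts in $V$; applying \eqref{3.3} in $V$ gives $\al(x) + \al(y) = e\al(x)$. Comparing the two computations yields $\al(x) + \al(y) = \al(x+y)$.

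For part (ii): suppose $\al(x) + \al(y)$ is tangible. First dispose of the case $ex = ey$ by part (i). Otherwise, by axiom \eqref{3.4} in $U$ we may assume WLOG $ex > ey$, whence Theorem 3.10 gives $x + y = x$, so $\al(x+y) = \al(x)$; it remains to show $\al(x) + \al(y) = \al(x)$. Apply $\gm$, which is order-preserving, to $ex > ey$: we get $e\al(x) = \gm(ex) \ge \gm(ey) = e\al(y)$. If the inequality were strict, Theorem 3.10 in $V$ would force $\al(x) + \al(y) = \al(x)$, and we are done. The remaining possibility is $e\al(x) = e\al(y)$, in which case \eqref{3.3} in $V$ gives $\al(x) + \al(y) = e\al(x)$, a \emph{ghost} element; but this contradicts the hypothesis that $\al(x) + \al(y)$ is tangible (recall $\nu_V(x)=0 \Leftrightarrow x=0$ and a nonzero ghost is not tangible, while $0$ would force $\al(x)=\al(y)=0$, again giving equality $\al(x)+\al(y)=0=\al(x)$). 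So in all surviving cases $\al(x)+\al(y)=\al(x)=\al(x+y)$.

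For part (iii): assume $\gm$ is injective. We must check $\al(x+y) = \al(x) + \al(y)$ for all $x, y \in U$. Again split on the comparison of $ex$ and $ey$. If $ex = ey$, this is part (i). If, say, $ex > ey$, then injectivity of $\gm$ upgrades $\gm(ex) \ge \gm(ey)$ to a \emph{strict} inequality $e\al(x) > e\al(y)$; Theorem 3.10 in $U$ gives $x+y = x$ and Theorem 3.10 in $V$ gives $\al(x) + \al(y) = \al(x)$, so both sides equal $\al(x)$. Hence $\al$ is additive, and combined with TM1--TM3 this makes $\al$ a semiring homomorphism.

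The main obstacle, such as it is, lies in part (ii): one must be careful that ``$\al(x)+\al(y)$ tangible'' genuinely rules out the degenerate branch $e\al(x) = e\al(y)$, using Remark 3.7 to handle the zero case separately — once that case-split is organized, the rest is a mechanical application of Theorem 3.10 on both sides of $\al$.
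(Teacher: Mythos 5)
Your proposal is correct and follows essentially the same route as the paper's proof: reduce all additions to ghost comparisons via Theorem \ref{thm3.10}, use that $\gamma=\alpha^\nu$ is order preserving (and, in (iii), injective hence strictly order preserving), and in (ii) observe that tangibility of $\alpha(x)+\alpha(y)$ excludes the branch $e\alpha(x)=e\alpha(y)$. Your slight reorganization of (ii) (case-splitting on $ex$ versus $ey$ first and discharging the degenerate branch by contradiction, with the zero case handled via Remark \ref{remark3.7}) is only a cosmetic variant of the paper's argument.
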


\begin{proof} Let $x,y\in U$ be given, and assume without loss of
generality that $ex\le ey.$ Notice that this implies
$$e\alpha(x)=\alpha(ex)\le\alpha(ey)=e\alpha(y).$$

\begin{enumerate} \eroman
    \item[i):]
 If $ex=ey,$ then $e\alpha(x)=e\alpha(y),$ and we have $x+y=ex,$
$\alpha(x)+\alpha(y)=e\alpha(x)=\alpha(ex)$; hence
$\alpha(x)+\alpha(y)=\alpha(x+y).$

\item[ii):]  If $\alpha(x)+\alpha(y)$ is tangible, then certainly
$e\alpha(x)\ne e\alpha(y);$ hence $e\alpha(x)<e\alpha(y).$ This
implies $ex<ey.$ Thus $x+y=y,$ $\alpha(x)+\alpha(y)=\alpha(y);$
hence $\alpha(x)+\alpha(y)=\alpha(x+y).$

\item[iii):] From i) we know that $\alpha(x+y)=\alpha(x)+\alpha(y) $
holds if $ex=ey.$ Assume now that $ex<ey.$ Since $\gamma$ is
injective this implies $e\alpha(x)<e\alpha(y).$ Thus $x+y=y,$
$\alpha(x)+\alpha(y)=\alpha(y);$ hence again
$\alpha(x+y)=\alpha(x)+\alpha(y).$ \end{enumerate}
\end{proof}

 Given an \m-valuation $v: R\to M$, we now focus on the
supervaluations $\varphi: R\to U$ which cover $v,$ i.e., with
$eU=M$ and $e\varphi=\nu_U\circ\varphi=v.$ We single out a class
of supervaluations which will play a special role.

\begin{defn}\label{defn5.11} A supervaluation $\varphi: R\to U$ is
called \bfem{tangibly injective} if the map $\varphi$ is injective
on the set $\varphi^{-1}(\mathcal T(U)),$ i.e.,
$$\forall a,b\in R: \ \varphi(a)=\varphi(b)\in \mathcal T(U)\quad\Rightarrow \quad
a=b.$$
\end{defn}

\begin{example}\label{examp5.12} The supervaluation $\varphi_v:
R\to U(v)$ constructed in \S$4$ (cf.~Example \ref{examp4.5} and
Definition \ref{defn4.6}) is injective on the set $R\setminus
v^{-1}(0),$ hence certainly tangibly injective. Notice that
$\varphi^{-1}(\mathcal T(U(v)))=R\setminus v^{-1}(0),$ i.e.,
$\varphi$ is tangible. $\varphi$ is also surjective.
\end{example}

\begin{thm}\label{thm5.13}
Assume that $\varphi: R\to U$ is a tangibly injective
supervaluation covering $v: R\to M.$ Let $\psi: R\to V$ be another
supervaluation covering $v,$ in particular, $eU=eV=M.$
\begin{enumerate}\item[a)]   $\varphi$ dominates $\psi$ iff the
following holds:
\begin{equation}\label{5.5}
\forall a\in R: \ \varphi(a)=v(a)\quad\Rightarrow \quad
\psi(a)=v(a),
\end{equation}
in other terms, $\varphi(a)\in eU\quad\Rightarrow \quad \psi(a)\in
eV.$ \item[b)] If, in addition, $\varphi$ is tangibly surjective
(cf.~Definition \ref{defn4.1}.c), then $\varphi$ dominates $\psi$
iff there exists a homomorphism map $\alpha: U\to V$ covering the
identity of $M$ such that $\alpha\circ \varphi=\psi.$ The
supervaluation $\psi$ is tangibly surjective iff $\alpha$ is
surjective.
\end{enumerate}
\end{thm}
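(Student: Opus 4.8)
The plan is to prove part (a) first, and then derive part (b) as a fairly quick consequence using the machinery already developed in Lemma 5.2 and Proposition 5.10. For part (a), the implication $(\Rightarrow)$ is trivial: if $\varphi$ dominates $\psi$, then condition D3 of Definition 5.1 reads exactly $\varphi(a)\in eU \Rightarrow \psi(a)\in eV$, and since both $\varphi$ and $\psi$ cover $v$, having $\varphi(a)\in eU$ forces $\varphi(a)=e\varphi(a)=v(a)$ and likewise $\psi(a)\in eV$ forces $\psi(a)=v(a)$; this is precisely \eqref{5.5}. The substance is the converse. So assume \eqref{5.5}; I must verify D1, D2, D3. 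Condition D3 is immediate from \eqref{5.5} (it is just a restatement, as the theorem notes). Condition D2 is automatic because $e\varphi(a)=v(a)=e\psi(a)$ for \emph{all} $a$, so $e\varphi(a)\le e\varphi(b)$ and $e\psi(a)\le e\psi(b)$ are the identical inequality $v(a)\le v(b)$.

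The only real work is D1: if $\varphi(a)=\varphi(b)$ then $\psi(a)=\psi(b)$. Here I would split on whether the common value $\varphi(a)=\varphi(b)$ is tangible or ghost. If it is ghost, then $\varphi(a)=e\varphi(a)=v(a)$ and $\varphi(b)=v(b)$, so $v(a)=v(b)$; by \eqref{5.5} we get $\psi(a)=v(a)=v(b)=\psi(b)$, done. If it is tangible, then I use tangible injectivity of $\varphi$ (Definition 5.11): $\varphi(a)=\varphi(b)\in\mathcal T(U)$ forces $a=b$, whence trivially $\psi(a)=\psi(b)$. (If $\varphi(a)=\varphi(b)=0$, both are ghost and the first case applies, or one argues directly.) This completes D1 and hence part (a). The main obstacle, such as it is, is recognizing that tangible injectivity is exactly the hypothesis that lets the tangible case collapse to $a=b$; once that is seen, the case analysis is routine.

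For part (b): assuming additionally that $\varphi$ is tangibly surjective, I must upgrade the dominance to the existence of a transmission $\alpha:U\to V$ covering $\id_M$ with $\alpha\circ\varphi=\psi$, and moreover show $\alpha$ is a genuine semiring homomorphism (the theorem says ``homomorphism map''), not merely a transmissive map. The $(\Leftarrow)$ direction is given by Proposition 5.6: if such an $\alpha$ exists then $\alpha\circ\varphi$ is a supervaluation dominated by $\varphi$, and it equals $\psi$. For $(\Rightarrow)$: by part (a), $\varphi$ dominates $\psi$; replacing $U$ by $\varphi(R)\cup e\varphi(R)$ if necessary — but note that by tangible surjectivity $\varphi(R)\supset\mathcal T(U)$, and $e\varphi(R)=v(R)$ together with $\mathcal T(U)$ already exhausts the surjective hull, so I should first arrange $\varphi$ surjective (Remark 4.4 shows this loses nothing, and tangible surjectivity plus covering $v$ surjectively does the job) — Lemma 5.2 produces a unique transmission $\alpha=\alpha_{\psi,\varphi}:U\to V$ with $\alpha\circ\varphi=\psi$ and $\alpha^\nu=\gamma_{w,v}$; since $w=v$, we have $\gamma_{w,v}=\id_M$, so $\alpha$ covers $\id_M$. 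Finally, to see $\alpha$ is a homomorphism and not just transmissive, I invoke Proposition 5.10(iii): its ghost part $\gamma=\alpha^\nu=\id_M$ is injective, hence $\alpha$ is a semiring homomorphism. The last sentence, that $\psi$ is tangibly surjective iff $\alpha$ is surjective, follows from the analogue of Scholium 5.8(c) / Remark 5.7 adapted to tangible surjectivity: $\psi(R)=\alpha(\varphi(R))\supset\alpha(\mathcal T(U))$, and since $\alpha$ covers $\id_M$ it maps $\mathcal T(U)$ into $\mathcal T(V)\cup\{0\}$ and is surjective onto $\mathcal T(V)$ exactly when $\alpha$ is surjective on all of $U$ (as $eV=M$ is already hit). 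The one point needing a little care is this bookkeeping about ``surjective'' versus ``tangibly surjective'' when invoking Lemma 5.2, which strictly requires $\varphi$ surjective; I would handle it by the standard reduction of Remark 4.4.
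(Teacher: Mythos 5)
Your proposal is correct and follows essentially the same route as the paper: in (a) you verify D1--D3 directly, with D2 trivial since both maps cover $v$, D3 being \eqref{5.5}, and tangible injectivity disposing of the tangible case of D1 (the paper leaves the ghost case implicit, as you spell out); in (b) you reduce to $\varphi$ surjective, apply Lemma~\ref{lem5.2} to get a transmission covering $\id_M$, and Proposition~\ref{prop5.10}.iii to upgrade it to a semiring homomorphism, exactly as in the paper. One parenthetical slip in your last step: a transmission covering $\id_M$ need \emph{not} map $\mathcal T(U)$ into $\mathcal T(V)\cup\{0\}$ (see Proposition~\ref{prop5.17} or the terminal cover $\nu_U$), but this claim is dispensable -- the criterion actually needed, and used by the paper, is that $\alpha(U)\supset eV$, so $\alpha$ is surjective iff $\alpha(\mathcal T(U))\supset\mathcal T(V)$, which is equivalent to $\psi(R)\supset\mathcal T(V)$ because $\mathcal T(U)\subset\varphi(R)$ and, by D3, any $a$ with $\psi(a)$ tangible has $\varphi(a)\in\mathcal T(U)$.
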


\begin{proof} a): In the definition of dominance in Definition
\ref{defn5.1}, the axiom D2 holds trivially since
$e\varphi(a)=e\psi(a)=v(a).$ Axiom D3 is our present condition
\eqref{5.5}. Axiom D1 needs only to be checked in the case
$\varphi(a)=\varphi(b)\in \mathcal T(U),$ and then holds trivially
since this implies $a=b$ by the tangible injectivity of $\varphi.$

b): Replacing $U$ by the subsemiring $\mathcal T(U)\cup v(R)$ we
assume without loss of generality that the supervaluation
$\varphi$ is surjective. A transmission $\alpha$ from $\varphi$ to
$\psi$ is forced to cover the identity of $M; $ hence is a
semiring homomorphism, cf.~Proposition \ref{prop5.10}.iii.  We
have $\alpha(U)\supset eV.$ Thus $\alpha$ is surjective iff
$\alpha(\mathcal T(U))=\mathcal T (V).$ This gives us the last
claim.
\end{proof}

\begin{cor}\label{cor5.14}
Assume that $v:R\to M$  is a valuation. The supervaluation
$\varphi_v: R~\to~U(v)$ dominates every supervaluation $\psi: R\to
U$ covering $v.$ Thus these supervaluations $\psi$ correspond
uniquely with the transmissive maps $\alpha: U(v)\to U$ covering
$\id_M.$ They are semiring homomorphisms.
\end{cor}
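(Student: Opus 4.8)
The plan is to deduce the Corollary from Theorem \ref{thm5.13} applied with $\varphi := \varphi_v$. Write $\mathfrak q := v^{-1}(0)$. Recall from Examples \ref{examp4.5} and \ref{examp5.12} that $\varphi_v : R \to U(v)$ covers $v$, is tangibly injective, and is tangibly surjective, with $\varphi_v^{-1}(\mathcal T(U(v))) = R \setminus \mathfrak q$; in particular $\varphi_v(a)$ is a ghost element of $U(v)$ exactly when $a \in \mathfrak q$, and then $\varphi_v(a) = 0$.

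First I would show that $\varphi_v$ dominates every supervaluation $\psi : R \to U$ covering $v$. Since $\varphi_v$ is tangibly injective and covers $v$, part (a) of Theorem \ref{thm5.13} says that $\varphi_v \ge \psi$ is equivalent to condition \eqref{5.5}, namely $\varphi_v(a) \in eU(v) \Rightarrow \psi(a) \in eU$. But by the description above, $\varphi_v(a) \in eU(v)$ forces $a \in \mathfrak q$, hence $v(a) = 0$, hence $e\psi(a) = v(a) = 0$; by Remark \ref{remark3.7} this gives $\psi(a) = 0 \in eU$. So \eqref{5.5} holds automatically, and $\varphi_v$ dominates $\psi$.

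It then remains to read off the correspondence. Because $\varphi_v$ is moreover tangibly surjective, part (b) of Theorem \ref{thm5.13} (together with Scholium \ref{schol5.8}) yields that the supervaluations $\psi : R \to U$ covering $v$ are in bijection, via $\psi = \alpha \circ \varphi_v$ and $\alpha = \alpha_{\psi,\varphi_v}$, with the transmissions $\alpha : U(v) \to U$ whose ghost part $\alpha^\nu$ is $\id_M$; and since $\id_M$ is injective, Proposition \ref{prop5.10}(iii) shows each such $\alpha$ is a semiring homomorphism. The verifications are all routine; the only delicate point is that $\varphi_v$ need not be surjective (it is so only when $v$ is surjective), so Scholium \ref{schol5.8}(a) cannot be invoked verbatim -- one must pass to the surjective restriction of $\varphi_v$ and then extend the resulting transmission over the ``useless'' ghosts of $M$ by the forced rule $x \mapsto x$. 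This is precisely why the relevant tool is Theorem \ref{thm5.13}(b), whose only surjectivity hypothesis is tangible surjectivity, which $\varphi_v$ always enjoys.
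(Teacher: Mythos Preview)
Your proof is correct and follows essentially the same route as the paper's: invoke the tangible injectivity of $\varphi_v$ to reduce dominance to condition~\eqref{5.5} via Theorem~\ref{thm5.13}(a), observe that \eqref{5.5} holds trivially because $\varphi_v(a)$ is ghost only when $v(a)=0$, and then appeal to Theorem~\ref{thm5.13}(b) and Proposition~\ref{prop5.10}(iii). You supply more detail than the paper (in particular the explicit use of Remark~\ref{remark3.7}, and your careful remark about extending $\alpha$ over the ghosts in $M\setminus v(R)$ when $v$ is not surjective), but the underlying argument is identical.
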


\begin{proof} $\varphi_v$ is tangibly injective, and \eqref{5.5}
holds trivially, since $\varphi_v(a)\in eU$ only if $v(a)=0.$
Theorem \ref{thm5.13} and \propref{prop5.10}.iii apply.
\end{proof}

\begin{defn}\label{defn5.15}
Due to this property of $\varphi_v$ we call $\varphi_v$ the
\bfem{initial supervaluation} covering $v$ (or \bfem{initial
cover} of  $v$ for short).
\end{defn}

\begin{remark}\label{rem5.16}
We may also regard $v: R\to M$ as a cover of $v,$ viewing $M$ as a
ghost supertropical semiring. Clearly every supervaluation $\psi:
R\to U$ covering $v$ dominates $v$ with transmission $\nu_U.$ Thus
we may view $v: R\to M$ as the  \bfem{terminal supervaluation}
covering~$v$ (or \bfem{terminal cover} of $v$ for short).
\end{remark}

The following proposition gives examples of dominance
$\varphi\ge\psi$ where $\varphi$ is not assumed to be tangibly
injective.

\begin{prop}\label{prop5.17}
Let $U$ be a supertropical semiring with ghost ideal $M:=eU.$
Assume that $L$ is a submonoid of $(M,\cdot)$ with
$M\cdot(M\setminus L)\subset M\setminus L.$ \begin{enumerate}
\item[a)] The map $\alpha: U\to U,$ defined by
$$\alpha(x)=\begin{cases} x &\ \text{if}\quad ex\in L,\\
ex&\ \text{if}\quad ex\in M\setminus L,\end{cases}$$ is an
endomorphism of the semiring $U.$ \item[b)] If $\varphi: R\to U$
is any supervaluation, then the map $\varphi_L:=\alpha\circ
\varphi$ from $R$ to $U$ is a supervaluation dominated by
$\varphi$ and covering the same \m-valuation as $\varphi,$ i.e.
$e\varphi_L=e\varphi.$
\end{enumerate}
\end{prop}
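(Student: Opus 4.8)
The plan is to reduce everything to one elementary observation about the map $\alpha$ of part a): it fixes the ghost ideal pointwise and, more generally, satisfies $e\alpha(x)=ex$ for every $x\in U$, i.e. $\nu_U\circ\alpha=\nu_U$. This is immediate, since by definition $\alpha(x)\in\{x,ex\}$ and $e\cdot x=ex=e\cdot(ex)$; restricting to $x\in M=eU$ gives $\alpha|_M=\id_M$. Granting this, part a) reduces to checking the semiring-homomorphism axioms for $\alpha$, and part b) follows from \propref{prop5.6} together with a direct check of the dominance axioms D1--D3 of Definition \ref{defn5.1}.

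For part a): $\alpha(0)=0$ holds (both branches give $0$), and $\alpha(1)=1$ because $e\cdot 1=e=1_M$ lies in $L$, $L$ being a submonoid of $(M,\cdot)$. For additivity, note that by \thmref{thm3.10} the sum $x+y$ is determined by the comparison of $ex$ and $ey$ in $M$, and likewise $\alpha(x)+\alpha(y)$ by the comparison of $e\alpha(x)=ex$ and $e\alpha(y)=ey$; running through the three cases $ex>ey$, $ex<ey$, $ex=ey$ (using $\alpha|_M=\id_M$ in the last) gives $\alpha(x+y)=\alpha(x)+\alpha(y)$. The only step where the full hypothesis on $L$ is needed is multiplicativity: from $e(xy)=(ex)(ey)$, if $ex,ey\in L$ then $e(xy)\in L$ and $\alpha(xy)=xy=\alpha(x)\alpha(y)$; if instead, say, $ex\in M\setminus L$, then $e(xy)=(ey)(ex)\in M\cdot(M\setminus L)\subset M\setminus L$, so $\alpha(xy)=e(xy)$, while $\alpha(x)\alpha(y)=(ex)\alpha(y)$, and by the rule $a\nu(b)=\nu(a)b=\nu(ab)$ from \eqref{3.2} this equals $e(xy)$ whether $\alpha(y)=y$ or $\alpha(y)=ey$. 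Thus $\alpha$ is a semiring endomorphism of $U$. I regard this multiplicativity verification as the only subtle point: the two defining conditions on $L$ are precisely what makes the two-branch definition of $\alpha$ respect products, and everything else is formal bookkeeping with the identities $e\alpha(x)=ex$ and \eqref{3.2}.

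For part b): a semiring endomorphism of the supertropical semiring $U$ is in particular a transmission $U\to U$, so by \propref{prop5.6} the composite $\varphi_L=\alpha\circ\varphi$ is a supervaluation on $R$. Since $e\varphi_L(a)=e\alpha(\varphi(a))=e\varphi(a)$ for all $a\in R$, we have $e\varphi_L=e\varphi$, so $\varphi_L$ covers the same \m-valuation as $\varphi$. Finally $\varphi\ge\varphi_L$: D1 holds because $\varphi_L$ factors through $\varphi$; D2 is vacuous since $e\varphi_L=e\varphi$; and D3 holds because $\varphi(a)\in eU$ forces $\varphi(a)\in M$, whence $\varphi_L(a)=\alpha(\varphi(a))\in\{\varphi(a),e\varphi(a)\}\subset eU$. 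This completes the proof.
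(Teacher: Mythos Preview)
Your proof is correct and follows essentially the same approach as the paper's: establish $e\alpha(x)=ex$ and $\alpha|_M=\id_M$, then verify the semiring-homomorphism axioms, with part b) falling out immediately. You are in fact more explicit than the paper, which dismisses multiplicativity as ``straightforward'' and part b) as ``now obvious''; your detailed verification of multiplicativity (where the two hypotheses on $L$ actually enter) and of D1--D3 is a welcome addition.
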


\begin{proof} a):  We have $e\alpha(x)=ex$ for every $x\in U,$ and
$\alpha(x)=x$ for every $x\in M.$ One checks in a straightforward
way that $\alpha$ is multiplicative, $\alpha(0)=0,$ $\alpha(1)=1.$

We verify additivity. Let $x,y\in U$ be given, and assume without
loss of generality that $ex\le ey.$ We have
$e\alpha(x)=\alpha(e)\alpha(x)=\alpha(ex)=ex$ and $e\alpha(y)=ey.$
If $ex=ey$ then $x+y=ex,$ and
$\alpha(x)+\alpha(y)=e\alpha(x)=ex=\alpha(x+y).$ If $ex<ey$ then
$x+y=y$ and $\alpha(x)+\alpha(y)=\alpha(y);$ hence again
$\alpha(x)+\alpha(y)=\alpha(x+y).$

b): Now obvious.
\end{proof}

Notice that $\varphi_L=\alpha\circ\varphi$ with a map $\alpha:
U\to U$ given by $\alpha(x)=x$ if $ex\in L,$ and $\alpha(x)=ex$ if
$ex\in M\setminus L.$ Thus  if $\varphi$ is surjective, $\alpha$
is the transmission from $\varphi$ to $\varphi_L$.

It is not difficult to find instances where \propref{prop5.17}
applies.

\begin{example}\label{examp5.18}
  Assume  that $M$
is a submonoid of $\Gamma\cup\{0\}$ for $\Gamma$ an ordered
abelian group. Let $H$ be a  subgroup of $\Gamma$ containing the
set $\{x\in M\bigm | x>1\}.$ Then
$$L=\{x\in M\mid \exists h\in H\quad\text{with}\quad x\ge h\} $$
is a  submonoid of $M\setminus\{0\}.$ We claim that $M \cdot
(M\setminus L)\subset M\setminus L.$
\end{example}

\begin{proof}
Let $x\in M,$\ $y\in M\setminus L$ be given. If $x\le 1,$ then
$xy\le y;$ hence, clearly, $xy\in M\setminus L.$ Assume now that
$x>1.$ Then $x\in H.$ Suppose that $xy\in L;$ hence $h\le xy$ for
some $h\in H.$ Then $x^{-1}\le y$ and $x^{-1}h\in H;$ hence $y\in
L,$ a contradiction. Thus $xy\in M\setminus L$ again.
\end{proof}

In \cite{IKR} we will meet many transmissions which are not
semiring homomorphisms.

\section{Fiber contractions}\label{sec:6}

Before we come to the main theme of this section, we write down
functorial  properties of the class of  transmissive maps.

\begin{prop}\label{prop6.1}
Let $\alpha: U\to V$ and $\beta:V\to W$ be maps between
supertropical semirings.
\begin{enumerate}
\item[i)] If $\alpha$ and $\beta$ are  transmissive, then
$\beta\alpha$ is   transmissive. \item[ii)] If $\alpha$ and
$\beta\alpha$ are   transmissive and $\alpha$ is surjective, then
$\beta$ is   transmissive.
\end{enumerate}

\end{prop}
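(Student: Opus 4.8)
The plan is to verify the five defining properties TM1--TM5 of a transmissive map (Definition \ref{defn5.5a}) for the composites in question, using the characterization noted right after Theorem \ref{thm5.4}: a transmissive map $\alpha: U \to V$ is precisely a monoid homomorphism $(U,\cdot\,) \to (V,\cdot\,)$ whose restriction to $eU$ is a semiring homomorphism $eU \to eV$. So for part (i) I would argue as follows. Composition of monoid homomorphisms is a monoid homomorphism, which immediately gives TM1, TM2, TM3 for $\beta\alpha$. For TM4, $\alpha(e_U) = e_V$ by TM4 for $\alpha$, and $\beta(e_V) = e_W$ by TM4 for $\beta$, hence $\beta\alpha(e_U) = e_W$. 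Consequently $\beta\alpha(e_U x) = \beta(e_V \alpha(x))$ lies in $e_W W$, i.e. $\beta\alpha$ maps $eU$ into $eW$; and on $eU$ it is the composite of the semiring homomorphism $\alpha|_{eU}: eU \to eV$ with the semiring homomorphism $\beta|_{eV}: eV \to eW$, hence a semiring homomorphism, which gives additivity TM5 on $eU$. Thus $\beta\alpha$ is transmissive.

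For part (ii), the hypotheses are that $\alpha$ and $\beta\alpha$ are transmissive and $\alpha$ is surjective; I want to conclude $\beta$ is transmissive. Since $\alpha$ is a surjective monoid homomorphism and $\beta\alpha$ is a monoid homomorphism, a standard diagram chase shows $\beta$ is a monoid homomorphism: for $y_1, y_2 \in V$ pick $x_i \in U$ with $\alpha(x_i) = y_i$, then $\beta(y_1 y_2) = \beta\alpha(x_1 x_2) = \beta\alpha(x_1)\beta\alpha(x_2) = \beta(y_1)\beta(y_2)$, and similarly $\beta(1_V) = \beta\alpha(1_U) = 1_W$, $\beta(0_V) = 0_W$. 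This gives TM1--TM3 for $\beta$. For TM4: $\alpha$ surjective and $\alpha(eU) \subseteq eV$ forces $\alpha(eU) = eV$ (every element of $eV$ is $e_V\alpha(x) = \alpha(e_U x)$ for some $x$), so $\beta(e_V) = \beta\alpha(e_U) = e_W$, which is TM4; moreover $\beta(eV) = \beta\alpha(eU) = (\beta\alpha)^\nu(eU) \subseteq eW$, so $\beta$ maps $eV$ into $eW$. Finally, for TM5, restrict to $eV$: given $y_1, y_2 \in eV$, write $y_i = \alpha(x_i)$ with $x_i \in eU$ (possible since $\alpha(eU) = eV$), and compute $\beta(y_1 + y_2) = \beta(\alpha(x_1) + \alpha(x_2)) = \beta(\alpha(x_1 + x_2))$ --- here I use that $\alpha|_{eU}$ is additive, TM5 for $\alpha$ --- which equals $\beta\alpha(x_1 + x_2) = \beta\alpha(x_1) + \beta\alpha(x_2) = \beta(y_1) + \beta(y_2)$, using TM5 for $\beta\alpha$ on $eU$. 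Hence $\beta|_{eV}$ is a semiring homomorphism and $\beta$ is transmissive.

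The only point requiring a little care --- and the closest thing to an obstacle --- is the surjectivity bookkeeping in part (ii): one must check that $\alpha$ surjective on $U$ actually yields $\alpha(eU) = eV$ (not merely $\subseteq$), and that preimages of ghost elements can be chosen inside $eU$, so that the additivity computation for $\beta$ takes place over $eU$ where TM5 is available. Both follow at once from $e_V\alpha(x) = \alpha(e_U x)$ and $e_U x \in eU$. Everything else is routine transport of homomorphism identities along the surjection $\alpha$, so no further difficulty is expected.
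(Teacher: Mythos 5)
Your proof is correct and follows essentially the same route as the paper: the paper also disposes of TM1--TM4 by routine composition and transport along the surjection (declaring this ``evident'') and then concentrates on TM5, using exactly your observation that $\alpha(e_Ux)=e_V\alpha(x)$ makes $\alpha|eU:eU\to eV$ surjective so that the additivity computation for $\beta$ can be pulled back to $eU$. Your write-up merely makes the paper's telegraphic steps explicit; no gap.
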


\begin{proof}
a) It is evident that analogous statements hold for the class of
maps between supertropical semirings obeying the axioms TM1--TM4
in \S5. Thus we may assume from the beginning that $\alpha,\beta$
and (hence) $\beta\alpha$ obey TM1--TM4, and have only to deal
with the axiom TM5   (cf. \thmref{thm5.4}, Definition
\ref{defn5.5a}).

b) We conclude from TM3 and TM4 that $\alpha$ maps $eU$ to $eV$
and $\beta$ maps $eV$ to $eW.$ TM5 demands that these restricted
maps are semiring homomorphisms. Thus it is evident that
$\beta\alpha$ obeys TM5 if $\alpha$ and $\beta$ do. If $\alpha$ is
surjective, then also the restriction $\alpha | eU: eU\to eV$ is
surjective, since for $x\in U,$ \ $y\in eV$ with $\alpha(x)=y$ we
also have $\alpha(ex)=y.$ Clearly, TM5 for $\alpha$ and
$\beta\alpha$ implies TM5 for $\beta$ in this case.
\end{proof}

Often we will only need the following special case of
\propref{prop6.1}.

\begin{cor}\label{cor6.2}
Let $U,V,W$ be supertropical semirings. Assume that $\alpha: U\to
V$ is a surjective semiring homomorphism. Then a map $\beta: V\to
W$ is transmissive  iff $\beta\alpha$ has this
property.\hfill\quad\qed
\end{cor}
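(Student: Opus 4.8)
The plan is to deduce the corollary directly from \propref{prop6.1}, rather than to re-verify the axioms TM1--TM5 by hand. The one preliminary fact needed is the observation recorded just after Definition~\ref{defn5.5a}, that every semiring homomorphism between supertropical semirings is automatically a transmissive map. Hence the given map $\alpha\colon U\to V$ is itself a \emph{surjective transmission}, and both implications of the corollary become instances of \propref{prop6.1} applied to the pair $\alpha,\beta$.

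Concretely, for the forward direction I would argue as follows: if $\beta\colon V\to W$ is transmissive, then, since $\alpha$ is transmissive, part (i) of \propref{prop6.1} gives at once that the composite $\beta\alpha$ is transmissive. For the converse, suppose $\beta\alpha$ is transmissive; now $\alpha$ is transmissive \emph{and surjective}, so part (ii) of \propref{prop6.1}, applied with this $\alpha$ and this $\beta$, yields that $\beta$ is transmissive. Combining the two implications gives the stated equivalence.

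I do not expect any real obstacle here, as the corollary is precisely the specialization of \propref{prop6.1} to the case in which $\alpha$ happens to be a semiring homomorphism. The only point worth flagging in the write-up is that the surjectivity hypothesis on $\alpha$ is used — and is only needed — for the reverse implication (so that part (ii) of \propref{prop6.1} is applicable), which is exactly why it appears among the standing hypotheses of the corollary.
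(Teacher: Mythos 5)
Your argument is correct and is exactly the paper's intended justification: the corollary is stated with no separate proof precisely because a surjective semiring homomorphism is in particular a surjective transmissive map (TM1--TM5 are immediate, e.g. $\alpha(e_U)=\alpha(1+1)=e_V$), so both directions follow from Proposition~\ref{prop6.1}(i) and (ii) as you say. Your remark that surjectivity is needed only for the reverse implication is also accurate.
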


In the entire section $U$ is a \textit{supertropical semiring}. We
look for equivalence relations on the set $U$ that respect the
multiplication on $U$ and the fibers of the ghost map $\gamma_U:
U\to eU.$

\begin{defn}\label{defn6.3} Let $E$ be an equivalence relation on
the set $U$. We say that $E$ is \bfem{multiplicative} if for any
$x_1,x_2,y\in U,$
\begin{equation}\label{6.3}
x_1\sim_E x_2\quad\Rightarrow\quad x_1y\sim_E x_2 y.\end{equation}
We say that $E$ is \bfem{fiber conserving} if for any $x_1,x_2\in
U,$
\begin{equation}\label{6.4}
x_1\sim_E x_2\quad\Rightarrow\quad ex_1=ex_2.\end{equation} If $E$
is both multiplicative and fiber conserving, we call $E$ an
\bfem{MFCE-relation} (multiplicative fiber conserving equivalence
relation) for short.
\end{defn}

\begin{examples}\label{examps6.4} $ $
\begin{enumerate} \eroman
    \item
 Assume that $\alpha: U\to V$ is a multiplicative map from $U$
to a supertropical semiring $V.$ Then the equivalence $E(\alpha)$,
given by
$$x_1\sim x_2\quad\text{iff}\quad \alpha(x_1)=\alpha(x_2),$$
is clearly multiplicative. If in addition $\alpha(e_U)=e_V,$ and
 if the induced map $\gamma: eU\to eV,$ \
$\gamma(ex)=e\alpha(x),$ is injective, then $E(\alpha)$ is also
fiber conserving; hence an MFCE-relation. We usually denote this
equivalence $\sim$ by $\sim_\alpha.$

In particular, we have an MFCE-relation $E(\alpha)$ on $U$ for any
semiring homomorphism $\alpha: U\to V$ which is injective on $eU.$

\item  The ghost map $\nu=\nu_U: U\to U$ gives us an MFCE-relation
$E(\nu)$ on $U.$ Clearly
$$x_1\sim_\nu x_2\quad\text{iff}\quad ex_1=ex_2.$$
$E(\nu)$ is the coarsest MFCE-relation on $U.$

\item  If $E_1$ and $E_2$ are equivalence relations on the set $U$,
then $E_1\cap E_2$ is again an equivalence relation on $U.$ $\{$As
usual, we regard an equivalence relation on $U$ as a subset of
$U\times U \}.$ We have
$$x_1\sim_{E_1\cap E_2} x_2\quad\text{iff}\quad
x_1\sim_{E_1}x_2\quad \text{and}\quad x_1\sim_{E_2}x_2.$$ If $E_1$
is multiplicative and $E_2$ is  an MFCE, then $E_1\cap E_2$ is an
MFCE.

\item In particular, every multiplicative equivalence relation $E$
on $U$ gives us an MFCE-relation $E\cap E(\nu)$ on $U.$ This is
the coarsest MFCE-relation on $U$ which is finer than $E.$ We have
$$x_1\sim_{E\cap E(\nu)}x_2\quad\text{iff}\quad x_1\sim_E
x_2\quad\text{and}\quad ex_1=ex_2.$$

\item We define an equivalence relation $\tE$ (the ``$t$'' alludes
to ``tangible'') on $U$ as follows, writing $\sim_t$ for
$\sim_{\tE}:$ \begin{align*} x_1\sim_t x_2\quad \text{iff either}&
\quad
x_1=x_2\quad\\
\text{or} & \quad   x_1,x_2\in \mathcal T(U)\quad \text{and}\quad
ex_1=ex_2.\end{align*} Clearly, this is an MFCE-relation iff for
any tangible $x_1,x_2,y\in E$ with $ex_1=ex_2$ both $x_1y$ and
$x_2y$ are tangible or equal. In particular, $\tE$ is an MFCE if
$\mathcal T(U)$ is closed under multiplication. \end{enumerate}
\end{examples}

Let $F$ denote the equivalence relation on $U$ which has the
equivalence classes $\mathcal T(U)$ and $eU$. It is readily
checked that $\tE=F\cap E(\nu).$

\medskip

The equivalence classes of $\tE$ contained in $\mathcal T(U)$ are
the sets $\mathcal T(U)\cap\nu_U^{-1}(z)$ with $z\in M,$ which are
not empty. We call them the \bfem{tangible fibers} of $\nu_U.$

\medskip

Our next goal is to prove that, given an MFCE-relation $E$ on $U,$
the set $U/E$ of all $E$-equivalence classes inherits from $U$ the
structure of a supertropical semiring.

\begin{lem}\label{lem6.5} If $E$ is a fiber conserving equivalence
relation on $U$, then for any $x_1,x_2,y\in U$ $$x_1\sim_E
x_2\quad\Rightarrow\quad x_1+y\sim_E x_2+y.$$
\end{lem}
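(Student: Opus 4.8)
The plan is to reduce this additive statement to multiplicative data by invoking \thmref{thm3.10}, which expresses the sum of any two elements of $U$ purely in terms of the multiplication and the ordering on the ghost monoid $\mathcal{G}(U)$. The only feature of $E$ that is actually needed is that it is fiber conserving: from $x_1 \sim_E x_2$ one gets $ex_1 = ex_2$. (Multiplicativity of $E$ plays no role here.) So I would begin by recording this single consequence of the hypothesis.

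Next I would dispose of the degenerate case. If $\mathcal{G}(U) = \emptyset$, then \thmref{thm3.10} gives $x_1 + y = 0 = x_2 + y$, so $x_1 + y \sim_E x_2 + y$ by reflexivity of $E$. Assume then $\mathcal{G}(U) \neq \emptyset$. By \thmref{thm3.10}, the element $x_i + y$ is determined by comparing $ex_i$ with $ey$ in the totally ordered semiring $eU$: it equals $x_i$ if $ex_i > ey$, equals $y$ if $ex_i < ey$, and equals $ex_i$ if $ex_i = ey$. Since $ex_1 = ex_2$, each of the three alternatives occurs for $i = 1$ exactly when it occurs for $i = 2$. If $ex_1 = ex_2 > ey$, then $x_1 + y = x_1$ and $x_2 + y = x_2$, which are $E$-equivalent by hypothesis. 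If $ex_1 = ex_2 < ey$, then $x_1 + y = y = x_2 + y$. If $ex_1 = ex_2 = ey$, then $x_1 + y = ex_1 = ex_2 = x_2 + y$. In the latter two cases the two sums literally coincide, hence are trivially $E$-equivalent. This exhausts all possibilities.

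The point worth isolating — and really the whole content of the argument — is that fiber conservation is precisely the hypothesis guaranteeing that $x_1 + y$ and $x_2 + y$ fall into the same branch of the case distinction in \thmref{thm3.10}; once that observation is made, there is no genuine obstacle and nothing further to compute.
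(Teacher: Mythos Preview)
Your proof is correct and is essentially identical to the paper's own argument: both deduce $ex_1 = ex_2$ from fiber conservation and then perform the same three-way case split on the relation between $ex_1 = ex_2$ and $ey$ via \thmref{thm3.10}. The only differences are cosmetic --- you cite \thmref{thm3.10} explicitly and separate out the degenerate case $\mathcal{G}(U) = \emptyset$, whereas the paper leaves these implicit.
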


\begin{proof} $ex_1=ex_2.$ If $ey<e x_1,$ we have $x_1+y=x_1,$\
$x_2+y=x_2.$ If $ey=ex_1,$ we have $x_1+y=ey=x_2+y .$ If
$ey>ex_1,$ we have $x_1+y=y=x_2+y.$ Thus, in all three cases,
$x_1+y\sim_E x_2+y.$
\end{proof}
Notice that, as a formal consequence of the lemma, more generally
$$ x_1 \sim_E x_2, \   y _1 \sim_E y_2  \quad \Rightarrow  \quad  x_1 + y_1 \sim_E x_2 + y_2. $$

\begin{thm}\label{thm6.6}
Let $E$ be an MFCE-relation on a supertropical semiring $U.$ On
the set $\overline U: = U/E$ of equivalence classes $[x]_E,$ $x\in
U,$ we have a unique semiring structure such that the projection
map $\pi_E: U\to \overline U,$\ $x\mapsto [x]_E$ is a semiring
homomorphism. This semiring $\overline U$ is supertropical, and
$\pi_E$ covers a semiring isomorphism $eU\osr \bar e\overline U.$
(Here $\bar e:=e_{\overline U}=\pi_E(e).$)
\end{thm}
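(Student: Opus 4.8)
The plan is to transport the semiring operations from $U$ to $\bU = U/E$ in the obvious way and verify everything is well-defined, using the two compatibility lemmas already available. Concretely, I would first \emph{define} the operations: set $[x]_E + [y]_E := [x+y]_E$ and $[x]_E \cdot [y]_E := [xy]_E$. Multiplicativity of $E$ (axiom \eqref{6.3}) makes the product well-defined on one argument at a time, hence (by commutativity of $U$) on both; and Lemma \ref{lem6.5} together with the remark following it — $x_1 \sim_E x_2,\ y_1 \sim_E y_2 \Rightarrow x_1 + y_1 \sim_E x_2 + y_2$ — makes the sum well-defined. Once the operations are well-defined, $\pi_E$ is by construction a surjective homomorphism (it preserves $0 = [0]_E$, $1 = [1]_E$, sums and products), and all the semiring axioms (associativity, commutativity, distributivity, neutral elements) for $\bU$ are immediate images under $\pi_E$ of the corresponding identities in $U$. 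Uniqueness of the semiring structure making $\pi_E$ a homomorphism is clear since $\pi_E$ is surjective: the values of $[x]_E \star [y]_E$ are forced.

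Next I would show $\bU$ is supertropical. One must check axioms (3.3$'$), (3.3$''$), and \eqref{3.4}. Since $\bU$ is a homomorphic image of $U$, the identities (3.3$'$) ($1+1 = 1+1+1+1$) pass over verbatim. For (3.3$''$) and \eqref{3.4} the relevant point is to control $\bar e := e_{\bU} = 1_{\bU} + 1_{\bU} = \pi_E(e_U)$, i.e. $\bar e\, [x]_E = [ex]_E$. Here is where fiber conservation \eqref{6.4} does the real work: I claim $\bar e\,[x]_E = \bar e\,[y]_E$ iff $ex = ey$ in $U$. Indeed $[ex]_E = [ey]_E$ certainly holds if $ex = ey$; conversely if $[ex]_E = [ey]_E$ then by \eqref{6.4} applied to the representatives $ex, ey$ we get $e(ex) = e(ey)$, i.e. $ex = ey$ (using $e^2 = e$). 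This is exactly the statement that $\pi_E$ restricted to $eU$ is injective onto $\bar e\bU$; combined with surjectivity of $\pi_E$ (so $\bar e\bU = \pi_E(eU)$) and the fact that $\pi_E|_{eU}$ is a homomorphism, it gives the claimed isomorphism $eU \osr \bar e\,\bU$, which is the last sentence of the theorem. Using this identification, (3.3$''$) and \eqref{3.4} for $\bU$ reduce to the same axioms in $U$: e.g. if $[x]_E + [x]_E = [y]_E + [y]_E$ in $\bU$, then $[ex]_E = [ey]_E$ (since $x + x = ex$ etc.\ hold in $U$), so $ex = ey$ by the claim, so $x + y = ex$ in $U$, so $[x]_E + [y]_E = [ex]_E = [x]_E + [x]_E$; and similarly for bipotency, if $[x]_E + [x]_E \ne [y]_E + [y]_E$ then $ex \ne ey$, so $x + y \in \{x, y\}$ in $U$ by \eqref{3.4}, whence $[x]_E + [y]_E \in \{[x]_E, [y]_E\}$.

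The main obstacle, such as it is, is precisely the equivalence ``$[ex]_E = [ey]_E \iff ex = ey$'': this is the one place where fiber conservation is genuinely needed rather than just multiplicativity, and it is what simultaneously delivers the supertropicality of $\bU$ and the isomorphism $eU \cong \bar e\bU$. Everything else is routine transport of structure along the surjection $\pi_E$. I would organize the write-up so that this equivalence is isolated as the key observation, then deduce the semiring axioms, then (3.3$'$), then (3.3$''$) and \eqref{3.4}, then the final isomorphism statement.
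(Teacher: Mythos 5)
Your proposal is correct and follows essentially the same route as the paper: well-definedness of the quotient operations via Lemma \ref{lem6.5} and multiplicativity, inheritance of the semiring axioms and of (3.3$'$), (3.4) along the surjection $\pi_E$, and the key use of fiber conservation to show $\pi_E|_{eU}$ is injective (your equivalence $[ex]_E=[ey]_E \iff ex=ey$), which yields both the isomorphism $eU\osr\bar e\,\overline U$ and the reduction of (3.3$''$) for $\overline U$ to the same axiom in $U$. No gaps.
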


\begin{proof}
We write $\bar x: =[x]_E$ for $x\in U$ and $\pi:=\pi_E.$ Thus
$\pi(x)=\bar x.$ Due to \lemref{lem6.5} and condition \eqref{6.3},
we have a well-defined addition and multiplication on $\overline
U,$ given by the rules $(x,y\in U)$
$$\bar x+\bar y:=\overline{x+y},\qquad \bar x\cdot \bar
y:=\overline{xy}.$$

The axioms of a commutative semiring are valid for these
operations, since they hold in $U,$ and the map $\pi$ is a
homomorphism from $U$ onto the semiring $\overline U$.

We have $\bar 1+ \bar 1 = \bar e$ and $\bar e \overline U = \pi
(eU)$. If $x,y \in eU$ and $x \sim_E y$  then $x =ex = ey =y$,
since $E$ is fiber conserving. Thus the restriction $\pi | eU$ is
an isomorphism from the bipotent semiring $eU$  onto the semiring
$\bar e \overline U$ (which thus is again bipotent).

We are ready to prove that $\bU$ is supertropical, i.e. that
axioms $(3.3')$, $(3.3'')$, $(3.4)$ from \S\ref{sec:3} are valid.
It is obvious that $\bU$ inherit  properties $(3.3')$ and $(3.4)$
from $U$. Let $x,y \in E$ be given with $\be \bx = \be \by$, i.e.
$\overline{ex} = \overline{ey}$. Then $ex = ey$; hence $x + y =
ex$ by axiom $(3.3'')$ for $U$. Applying the homomorphism $\pi$ we
obtain $\bx + \by = \be \bx$. Thus $\bU$ also obeys $(3.3'')$.
\end{proof}

\begin{remark}\label{rem6.7}
\thmref{thm6.6} tells us, in particular, that every MFCE-relation
$E$ on $U$ is of the form $E(\alpha)$ for some semiring
homomorphism $\alpha: U\to V$ with $\alpha| eU$ bijective, namely,
$E=E(\pi_E).$\end{remark}

\begin{thm}\label{thm6.8}
Assume that $\alpha: U\to V$ is a multiplicative map. Let $E$ be
an MFCE-relation on $U,$ which is respected by $\alpha,$ i.e.,
$x\sim_E y$ implies $\alpha(x)=\alpha(y).$ Clearly, we have a
unique multiplicative map $\bar\alpha: U/E\to V$ with
$\bar\alpha\circ\pi_E=\alpha.$

 Then, if $\alpha$ is a
transmission (a semiring homomorphism), the map $\bar\alpha$ is of
the same kind.
\end{thm}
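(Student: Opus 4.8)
The plan is to deduce everything from the single fact, established in Theorem~\ref{thm6.6}, that $\pi_E\colon U\to U/E$ is a \emph{surjective} semiring homomorphism, and then to feed this into the functoriality results of Proposition~\ref{prop6.1} and Corollary~\ref{cor6.2}. The map $\bar\alpha$ itself, together with its multiplicativity and the identity $\bar\alpha\circ\pi_E=\alpha$, is already granted in the statement, so there is nothing to do there.

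First I would dispose of the transmission case. Write $\beta:=\bar\alpha\colon U/E\to V$, so that $\beta\circ\pi_E=\alpha$. Since $\pi_E$ is a surjective semiring homomorphism, Corollary~\ref{cor6.2} says that $\beta$ is transmissive if and only if $\beta\circ\pi_E$ is transmissive. By hypothesis $\beta\circ\pi_E=\alpha$ is a transmission, hence $\beta=\bar\alpha$ is a transmission. That is the entire argument for this half; no verification of TM1--TM5 by hand is needed.

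For the semiring-homomorphism case I would argue directly from surjectivity of $\pi_E$. The map $\bar\alpha$ is already multiplicative by its construction, and $\bar\alpha(\pi_E(0))=\alpha(0)=0$, $\bar\alpha(\pi_E(1))=\alpha(1)=1$; since $\pi_E$ is a semiring homomorphism, $\pi_E(0)=0_{U/E}$ and $\pi_E(1)=1_{U/E}$, so $\bar\alpha$ sends $0$ to $0$ and $1$ to $1$. It remains to check additivity: given $\xi,\eta\in U/E$, pick $x,y\in U$ with $\pi_E(x)=\xi$, $\pi_E(y)=\eta$ (possible since $\pi_E$ is onto); then, using that $\pi_E$ and $\alpha$ are both additive,
$$\bar\alpha(\xi+\eta)=\bar\alpha(\pi_E(x+y))=\alpha(x+y)=\alpha(x)+\alpha(y)=\bar\alpha(\xi)+\bar\alpha(\eta).$$
Hence $\bar\alpha$ is a semiring homomorphism.

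I do not foresee a genuine obstacle here: the statement is essentially a \emph{descent along a surjective homomorphism} fact, and the hypotheses on $E$ (that it be an MFCE-relation respected by $\alpha$) enter only through the prior existence and multiplicativity of $\bar\alpha$, which are taken as given. The one point to be slightly careful about is to invoke Corollary~\ref{cor6.2} for the transmission case rather than re-checking the axioms, since that corollary is exactly tailored to the situation of a surjective semiring homomorphism in the first slot.
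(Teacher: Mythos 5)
Your proposal is correct and follows essentially the same route as the paper, whose entire proof is to invoke Corollary~\ref{cor6.2} on the surjective semiring homomorphism $\pi_E$ furnished by Theorem~\ref{thm6.6}. Your explicit verification of additivity for the homomorphism case is just the spelled-out version of the same descent argument the paper leaves implicit.
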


\begin{proof}
Corollary \ref{cor6.2} gives us all the claims, since $\pi_E$ is a
surjective homomorphism.
\end{proof}

\begin{defn}\label{defn6.10}
We call a map $\alpha:U\to V$ between supertropical semirings a
\bfem{fiber contraction}, if $\alpha$ is   transmissive and
surjective, and the map $\gamma: eU\to eV$ covered by $\alpha$ is
strictly order preserving.
\end{defn}

Notice that then $\alpha$ is a semiring homomorphism
(cf.~\propref{prop5.10}.iii) (hence $\alpha$ is a transmission),
and $\gamma$ is an isomorphism from $eU$ to   $eV.$

\begin{schol}\label{schol6.11}$ $ \begin{enumerate}\item[i)]  If $E$ is an
MFCE-relation on $U$, by \thmref{thm6.6}, the map $\pi_E: U\to
U/E$ is a  fiber contraction. On the other hand, if a surjective
fiber contraction $\alpha: U\twoheadrightarrow V$ is given, then
clearly $E(\alpha)$ is an MFCE-relation, and, as \thmref{thm6.8}
tells us, $\alpha$ induces a semiring isomorphism $\bar{\alpha}:
U/E(\alpha)\osr V$ with $\alpha=\bar\alpha\circ\pi_{E(\alpha)}.$
In short, every   fiber contraction $\alpha$ on $U$ is a map
$\pi_E$ with $E$ an MFCE-relation on $U$ uniquely determined by
$\alpha,$ followed by a semiring isomorphism. \item[ii)] If the
semiring isomorphism $\bar\alpha$ is the identity $\id_M$ of
$M:=eU$ (in particular $eU=eV$), we say $\alpha$ is a \bfem{fiber
contraction over} $M$. \end{enumerate}
\end{schol}

If $E$ is an equivalence relation on a set $X$, and $Y$  is a
subset of $X$, we denote the set of all equivalence classes
$[x]_E$ with $ x\in Y\} $ by $Y/E.$

\begin{examp}\label{examp6.12} Assume that $U$ is a supertropical
domain (cf. \ref{defn3.13}). Then the equivalence relation $\tE$
introduced in Example \ref{examps6.4}.v is MFCE, and $\mathcal
T(U)$ is a union of $\tE$-equivalence classes.  The ring
$\overline U=U/\tE$ is a supertropical domain with $\mathcal
T(\overline U)=\mathcal T(U)/\tE$ and $\mathcal G(\overline
U)=\mathcal G(U).$ The ghost map of $\overline U$ maps $\mathcal
T(\overline U)$ bijectively to $\mathcal G(U);$ hence gives us a
monoid isomorphism $v: \mathcal T(\overline U)\osr \mathcal G(U).$
Thus (in notation of Examples \ref{examps3.16})
$$ U/\tE=D(\mathcal G(U)).$$
The map $\pi_{\tE}$ is a fiber contraction over
$eU=eU/\tE.$\end{examp}

\begin{examp}\label{examp6.13} (cf.~\propref{prop5.17}) Let $U$ be
a supertropical semiring, $M:=eU,$ and let $L$ be a submonoid of
$(M,\cdot)$ with $M\cdot (M\setminus L)\subset M\setminus L.$ Then
the map $\alpha: U\to U$ with $\alpha(x)=x$ if $ex\in L$,
$\alpha(x)=ex$ if $ex\in M\setminus L$, is a fiber contraction
over $M.$ The image of $\alpha$ is the subsemiring
$\nu_U^{-1}(L)\cup(M\setminus L)$ of $U.$\end{examp}

\begin{examp}\label{examp6.14}
Let again $U$ be a supertropical semiring and $M:=eU.$ But now
assume only that $L$ is a \bfem{subset} of $M$ with
$M\cdot(M\setminus L)\subset M\setminus L.$ We define an
equivalence relation $E(L)$ on $U$ as follows:
$$x\sim_{E(L)}y\quad \Leftrightarrow \quad\text{either}\quad x=y
\quad\text{or}\quad ex=ey\in M\setminus L.$$ One checks easily
that $E(L)$ is MFCE. But if $L$ is not a submonoid of $(M,\cdot)$,
then in the supertropical semiring $\overline U:=U/E(L)$ the set
$\mathcal T(\overline U)$ of tangible elements is not closed under
multiplication. In particular, $\overline U$ is not isomorphic to
a subsemiring of $U.$\end{examp}

For later use we introduce one more notation.

\begin{notation}\label{not6.14} If $\vrp:R \to U$ is a
supervaluation and $E$ is an \MFCE-relation on $U$, let $\vrp / E$
denote the supervaluation  $\pi_E \circ \vrp : R \to U/ E$. Thus,
for any $a \in R$,
$$ (\vrp/E)(a) \ := \ [\vrp(a)]_E \ . $$
\end{notation}

\section{The lattices $C(\varphi)$ and $\Cov(v)$}\label{sec:7}

Given an \m-valuation $v: R\to M$ on a semiring $R,$ we now can
say more about the class of all supervaluations $\varphi$ covering
$v.$ Recall that these are the supervaluations $\varphi: R\to U$
with $eU=M$ and $\nu_U\circ\varphi=v,$ in other words,
$e\varphi=v.$ For short, we call these supervaluations $\varphi$
the \bfem{covers} of the \m-valuation $v.$ It suffices to focus on
covers of $v$ which are tangibly surjective, cf.~Remark
\ref{rem4.4}. (N.B. Without loss of generality, we could even
assume that $v$ is surjective. Then a cover $\varphi$ of $v$ is
tangibly surjective iff $\varphi$ is surjective.)

\begin{defn}\label{defn7.1}\end{defn}

 \begin{enumerate}
 \item[a)] We call two covers $\varphi_1:R\to U_1,$\ $\varphi_2:
 R\to U_2$ of $ v$
\bfem{equivalent}, if $\varphi_1\ge \varphi_2$ and
$\varphi_2\ge~\varphi_1,$ i.e., $\varphi_1$ dominates $\varphi_2$,
and $\varphi_2$ dominates $\varphi_1.$ If $\varphi_1$ and
$\varphi_2$ are tangibly surjective (without essential loss of
generality, cf.~Remark \ref{rem4.4}), this means that
$\varphi_2=\alpha\circ\varphi_1$ with $\alpha: U_1\to U_2$ a
semiring isomorphism over $M$ (i.e., $e\alpha(x)=ex$ for all $x\in
U_1). $ \item[b)] We denote the equivalence class of a cover
$\varphi: R\to U$ of $v$ by $[\varphi]$, and we denote the set of
all these equivalence classes by $\Cov(v)$.
 $\{$Notice that
$\Cov(v)$ is really a set, not just a class, since for any
tangibly surjective cover $\varphi: R\to U$, we have
$U=\varphi(R)\cup M$; hence the cardinality of $U$ is bounded by
$\Card R+\Card M.\}$
 On $\Cov(v)$ we have a partial
ordering: $[\varphi_1]\ge[\varphi_2]$ iff $\varphi_1$ dominates
$\varphi_2.$ We always regard $\Cov(v)$ as a poset\footnote{\ =
partially ordered set} in this way. \item[c)] If a covering
$\varphi: R\to U$ of $v$ is given, we denote the subposet of
$\Cov(v)$ consisting of all $[\psi]\in\Cov(v)$ with
$[\varphi]\ge[\psi]$ by $C(\varphi).$ $\{$Notice that this poset
is determined by $\varphi$ alone, since $v=e\varphi.\}$
\end{enumerate}


\medskip

In \S5 we have seen that, given a tangibly surjective cover
$\varphi: R\to U$ of $v,$ the tangibly surjective covers $\psi:
R\to V$ dominated by $\varphi$ correspond uniquely to the
transmissive surjective maps $\alpha: U\to V$ which restrict to
the identity on $M=eU=eV.$ Scholium \ref{schol6.11} from the
preceding section tells us,  in particular, the following.

\begin{thm}\label{thm7.2} Assume that $\varphi: R\to U$ is a tangibly
surjective covering of the \m-valuation $v: R\to M.$
\begin{enumerate}\item[a)] The elements $[\psi]$ of $C(\varphi)$
correspond uniquely to the MFCE-relations $E$ on $U$ via
$[\psi]=[\varphi / E]$.

\item[b)] Let $\MFC(U)$ denote the set of
all \MFCE-relations on $U,$ ordered by the coarsening relation:
$E_1\le E_2$ iff $E_2$ is coarser than $E_1,$ i.e., $E_1\subset
E_2$, if the $E_i$ are viewed -- as customary -- as subsets of
$U\times U.$ The map $E\mapsto [\varphi / E]$ is an
anti-isomorphism (i.e., an order reversing bijection) from the
poset $\MFC(U)$ to the poset $C(\varphi).$\end{enumerate}
\end{thm}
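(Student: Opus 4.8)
The plan is to combine Theorem~\ref{thm7.2}'s part (a) with the bookkeeping already set up in \S6, so the real content is the order-reversing claim and the bijectivity in part (b), since part (a) is essentially a restatement of Scholium~\ref{schol6.11} applied to the surjective cover $\varphi$. First I would prove (a): given $\varphi:R\to U$ tangibly surjective covering $v$, Remark~\ref{rem5.7} (together with Scholium~\ref{schol5.8}) says the tangibly surjective covers $\psi$ dominated by $\varphi$ correspond uniquely to surjective transmissive maps $\alpha:U\to V$ restricting to $\id_M$ on $eU=M$. By Proposition~\ref{prop5.10}.iii such an $\alpha$ is automatically a semiring homomorphism, hence a fiber contraction over $M$ in the sense of Scholium~\ref{schol6.11}.ii, and conversely. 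Scholium~\ref{schol6.11}.i then identifies each such $\alpha$, up to a semiring isomorphism over $M$, with $\pi_E$ for a unique MFCE-relation $E=E(\alpha)$ on $U$; the induced isomorphism $\bar\alpha:U/E(\alpha)\osr V$ is over $M$, so $[\psi]=[\alpha\circ\varphi]=[\pi_E\circ\varphi]=[\varphi/E]$ (Notation~\ref{not6.14}). Thus $E\mapsto[\varphi/E]$ is a well-defined surjection $\MFC(U)\to C(\varphi)$.

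Next I would check this map is injective, which is where a small argument is needed. Suppose $E_1,E_2\in\MFC(U)$ with $[\varphi/E_1]=[\varphi/E_2]$, i.e.\ $\varphi/E_1$ and $\varphi/E_2$ are equivalent covers of $v$. By Definition~\ref{defn7.1}.a (using that both are tangibly surjective, since $\varphi$ is and $\pi_{E_i}$ is surjective) there is a semiring isomorphism $\theta:U/E_1\osr U/E_2$ over $M$ with $\theta\circ\pi_{E_1}\circ\varphi=\pi_{E_2}\circ\varphi$. The point is to promote the equality on $\varphi(R)$ to an equality on all of $U$: since $\varphi$ is tangibly surjective, $U=\varphi(R)\cup M$, and both $\theta\circ\pi_{E_1}$ and $\pi_{E_2}$ are semiring homomorphisms agreeing on $\varphi(R)$ and (being over $M$) agreeing on $M$; hence they agree on $U$. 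Therefore $x\sim_{E_1}y \iff \pi_{E_1}(x)=\pi_{E_1}(y) \iff \theta\pi_{E_1}(x)=\theta\pi_{E_1}(y) \iff \pi_{E_2}(x)=\pi_{E_2}(y) \iff x\sim_{E_2}y$, so $E_1=E_2$.

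Finally I would verify the order-reversing property. If $E_1\subset E_2$ (so $E_2$ is coarser), the projection $\pi_{E_1}:U\to U/E_1$ respects $E_2$ in the sense of Theorem~\ref{thm6.8}, wait---more directly: the natural map $U/E_1\to U/E_2$ sending $[x]_{E_1}\mapsto[x]_{E_2}$ is well-defined precisely because $E_1\subset E_2$, it is a surjective semiring homomorphism over $M$, hence a fiber contraction over $M$, and composing with $\varphi/E_1$ gives $\varphi/E_2$; thus $\varphi/E_1$ dominates $\varphi/E_2$, i.e.\ $[\varphi/E_1]\ge[\varphi/E_2]$. Conversely, if $[\varphi/E_1]\ge[\varphi/E_2]$, a transmission (semiring homomorphism over $M$) $U/E_1\to U/E_2$ carries $\varphi/E_1$ to $\varphi/E_2$; pulling back via $\pi_{E_1}$ and using the surjectivity of $\pi_{E_1}$ together with $U=\varphi(R)\cup M$ as above shows $x\sim_{E_1}y$ forces $x\sim_{E_2}y$, so $E_1\subset E_2$. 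Combined with injectivity and surjectivity this makes $E\mapsto[\varphi/E]$ an anti-isomorphism of posets.

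I expect the main obstacle to be the recurring "agreement on $\varphi(R)$ and on $M$ implies agreement on $U$" step: it rests on $U=\varphi(R)\cup M$, which holds because $\varphi$ is tangibly surjective, and on the maps being semiring homomorphisms (not merely multiplicative) over $M$ so that they are pinned down on the ghost part. None of these calculations is deep, but one must be careful that "tangibly surjective" is preserved under the operations $\psi\mapsto\varphi/E$ and that equivalence of covers in Definition~\ref{defn7.1}.a really does supply an isomorphism over $M$; both were arranged in \S4 and \S7 and may be invoked directly.
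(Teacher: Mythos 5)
Your proposal is correct and follows exactly the route the paper intends: the paper gives no separate proof, deriving the theorem directly from the \S5 correspondence between dominated tangibly surjective covers and transmissions over $M$ together with Scholium~\ref{schol6.11}'s identification of fiber contractions with \MFCE-relations. You merely make explicit the injectivity and order-reversal checks (via $U=\varphi(R)\cup M$ and homomorphisms agreeing on $\varphi(R)$ and on $M$) that the paper leaves implicit, and these are carried out correctly.
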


If $(E_i\bigm| i\in I)$ is a family in $\MFC(U)$ then the
intersection $E:=\bigcap_{i\in I}E_i$ is again an MFCE-relation on
$U,$ and is the infimum of the family $(E_i\bigm|i\in I)$ in
$\MFC(U)$. Since $\MFC(U)$ has a biggest and smallest element,
namely $E(\nu_U)$  and the diagonal of $U$ in $U\times U,$ it is
now clear that the poset $\MFC(U)$ is a complete lattice. Thus,
for any cover $\varphi: R\to U$ of the \m-valuation $v: R\to M$,
also the poset $C(\varphi)$ is a complete lattice. $\{$We easily
retreat to the case that $\varphi$ is tangibly surjective.$\}$

The supremum of a family $(E_i\bigm|i\in I)$ in $\MFC(U)$ is the
following equivalence relation $F$ on $U.$ Two elements $x,y$ of
$U$ are $F$-equivalent iff there exists a finite sequence
$x_0=x,x_1,\dots,x_m=y$ in $U$ such that for each
$j\in\{1,\dots,m\}$ the element $x_{j-1}$ is $E_k$-equivalent to
$x_j$ for some $k\in I.$

\begin{construction}\label{constr7.3}
Assume again that $\varphi$ is tangibly surjective. The supremum
$\bigvee_{i\in I}\xi_i$ of a family $(\xi_i\bigm| i\in I)$ in
$C(\varphi)$ can be described as follows. Choose for each $i\in I$
a tangibly surjective representative $\psi_i: R\to V_i$ of
$\xi_i.$ Thus $eV_i=M,$ and $\psi_i$ is a cover of $v$ dominated
by $\varphi.$ Let $e_i:=e_{V_i}$ $(=1_M)$, and let $V$ denote the
set of all elements $x=(x_i\bigm| i\in I)$ in the semiring
$\prod_{i\in I}V_i$ with $e_ix_i=e_jx_j$ for $i\ne j.$ This is a
subsemiring of $\prod_{i\in I} V_i$ containing the image $M'$ of
$M$ in $\prod V_i$ under the diagonal embedding of $M$ into $\prod
V_i.$ We identify $M'=M,$ and then have
$$e_U=1_M=(e_i\bigm| i\in I)=1_V+1_V.$$ It is now a trivial matter
to verify that $V$ is a supertropical semiring by checking the
axioms in \S3. We have $e_VV=eV=M'=M.$ The supervaluations
$\psi_i:R\to U_i$ combine to a map $\psi: R\to V,$ given by
$$\psi(a):=(\psi_i(a)\bigm|i\in I)\in V$$ for $a\in R.$ It is a
supervaluation covering $v,$ and $\varphi: R\to U$ dominates
$\psi$ (e.g., check the axioms D1--D3 in \S$5$). The class
$[\psi]$ is the supremum of the family $(\xi_i\bigm|i\in I)$ in
$C(\varphi).$
\end{construction}

Given again a family $(\xi_i\bigm|i\in I)$ in $C(\varphi)$ with
representatives $\psi_i: R\to V_i$ of the $\xi_i,$ we indicate how
the infimum $ \bigwedge \xi_i$ in $C(\varphi)$ can be built,
without being as detailed as above for the supremum.

We assume that each supervaluation $\psi_i$ is surjective. The
transmission $\delta_i: U\to V_i$ from $\varphi$ to $\psi_i$ is a
surjective semiring homomorphism. We form the categorical direct
limit (= colimit) of the family $(\delta_i\bigm|i\in I)$ in the
category of semirings (cf. \cite[Chap.~II]{Mit}, \cite[III,
\S3]{ML}). Thus we have a semiring $V$ together with a family of
semiring homomorphisms $(\alpha_i: V_i\to V\bigm|i\in I)$ such
that $\alpha_i\circ\delta_i=\alpha_j\circ\delta_j$ for $i\ne j$,
which is universal. This means that, given a family $(\beta_i:
V_i\to W\bigm|i\in I)$ of homomorphisms with
$\beta_i\circ\delta_i=\beta_j\circ\delta_j$ for $i\ne j, $ there
exists a unique homomorphism $\beta: V\to W$ with $\beta\circ
\alpha_i=\beta_i$ for every $i\in I.$ Choosing some $i\in I$ let
$$\varepsilon:=\alpha_i\circ\delta_i:U\to V.$$
This homomorphism, which is independent of the choice of $i,$ is
surjective, due to universality, since all maps $\delta_j: U\to
V_j$ are surjective.  It turns out that the restriction
$\varepsilon|eU$ maps $eU=M$ isomorphically onto $eV.$ We identify
$M$ with $eV$ by this isomorphism and then have $\varepsilon
|eU=1_M.$

  This can be  seen as follows. Let
$\nu:=\nu_U$ and $\nu_i:=\nu_{V_i}$ denote the ghost maps of $U$
and $V_i.$ For every $i\in I$ we have $\nu_i\circ\delta_i=\nu.$ By
universality we obtain a homomorphism $\mu:  V\to M$ with
$\mu\circ\alpha_i=\nu_i$ for every $i.$ Let $j_i$ denote the
inclusion map from $M$ to $V_i.$ We have $\nu_i\circ j_i=\id_M;$
hence
$$\mu\circ\alpha_i\circ j_i=\nu_i\circ j_i=\id_M.$$
The surjective homomorphism $\alpha_i$ maps $M=eV_i$ onto $eV.$ We
conclude that the restriction $\alpha_i | M$ gives an isomorphism
from $M$ onto $eV,$ the inverse map being given by $\mu.$

We identify $M$ with $eV$ via $\alpha_i | M.$ Now $\alpha_i:
V_i\to V$ has become a surjective semiring homomorphism over $M$
(for every $i).$ Thus also $\varepsilon: U\to V$ is a surjective
homomorphism over $M.$ We conclude, that $\ep$ gives an
\MFCE-relation $E(\ep)$ and the semiring  $V$ is supertropical.
The supervaluation $$\psi:=\varepsilon \varphi=\alpha_i\circ
\psi_i \quad \text{is dominated by every}\quad
\psi_i\quad\text{and}\quad [\psi]=\bigwedge_{i}\xi_i.$$ Since
$V_i=\psi_i(R)\cup M$ for every $i,$ the semiring $V$ and the
$\alpha_i$ can be described completely in terms of the $\psi_i$
without mentioning $U$ and the $\delta_i.$ We leave this to the
interested reader.

\begin{defn}\label{defn7.4}
We call a supervaluation $\varphi$ \bfem{initial} if $\varphi$
dominates every other supervaluation $\psi$ with $e\varphi=e\psi.$
We then also say that $\varphi$ is an \bfem{initial cover} of
$v:=e\varphi.$\end{defn}

If an \m-valuation $v: R\to M$ is given, a supervaluation
$\varphi: R\to U$ is an initial cover of $v$ iff $e\varphi=v$ and
$[\varphi]$ is the biggest element of the poset $\Cov(v).$

Such an initial cover had been constructed explicitly in \S$4$ in
the case that $v$ is a   valuation, namely, the supervaluation
$\varphi_v: R\to U(v),$ cf.~Definition \ref{defn4.6} and Corollary
\ref{cor5.14}. We now prove that an initial cover always exists,
although in general we do not have an explicit description.

\begin{prop}\label{prop7.5} Every \m-valuation $v: R\to M$ has an
initial cover. The poset $\Cov(v)$ is a complete
lattice.\end{prop}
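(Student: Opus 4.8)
The plan is to first produce an \emph{initial} cover of $v$ explicitly, as a ``fibred product over $M$'' of all tangibly surjective covers of $v$, and then to read off the complete-lattice property from the fact, recorded in the text just before Construction \ref{constr7.3}, that $C(\varphi)$ is a complete lattice for every cover $\varphi$ of $v$. So the whole proposition reduces to constructing a cover of $v$ that dominates all the others.

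To build it, let $I:=\Cov(v)$, which is a genuine set by the cardinality bound in Definition \ref{defn7.1}.b, and for each $i\in I$ choose a tangibly surjective representative $\psi_i\colon R\to V_i$, so that $eV_i=M$. I would now simply re-run Construction \ref{constr7.3} with this index set: form the sub-semiring
$$ V \ := \ \Big\{\,(x_i)_{i\in I}\in \prod_{i\in I} V_i \ \Big|\ ex_i=ex_j \ \text{ for all } i,j\,\Big\} $$
of $\prod_{i\in I}V_i$, together with the map $\psi\colon R\to V$, $\psi(a):=(\psi_i(a))_{i\in I}$. The key observation is that the arguments of Construction \ref{constr7.3} showing that $V$ is supertropical with $eV=M$ and that $\psi$ is a supervaluation covering $v$ never use a dominating cover $\varphi$: they use only that every $\psi_i$ covers $v$, so that $e\psi_i(a)=v(a)$ is independent of $i$ (which makes $\psi$ well defined). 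The single genuinely delicate point, verifying axiom \eqref{3.4} for $V$, is exactly what the constraint ``$ex_i=ex_j$'' buys: every $(x_i)\in V$ has all of its ghost coordinates $ex_i$ equal to one fixed element of $M$, so Theorem \ref{thm3.10} can be applied coordinatewise with a comparison that is uniform in $i$. Finally each projection $p_i\colon V\to V_i$ is a semiring homomorphism (hence a transmission) with $p_i\circ\psi=\psi_i$ and $p_i|eV=\id_M$, so $\psi$ dominates $\psi_i$ for every $i$.

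Next I would check that $\psi$ is initial in the sense of Definition \ref{defn7.4}, i.e.\ dominates \emph{every} cover of $v$, not merely the tangibly surjective ones. Given an arbitrary cover $\chi\colon R\to W$, set $W':=\chi(R)\cup eW$; this is a supertropical sub-semiring of $W$ with $eW'=M$, and $\chi$ corestricts to a tangibly surjective cover $\chi'\colon R\to W'$ of $v$. The inclusion $W'\hookrightarrow W$ is a semiring homomorphism, hence a transmission, so $\chi'$ dominates $\chi$; and $\psi$ dominates $\chi'$ by the previous step. As dominance is transitive (it chains directly through the implications D1--D3 of Definition \ref{defn5.1}), $\psi$ dominates $\chi$. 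Thus $\psi$ is an initial cover of $v$ and $[\psi]$ is the top element of $\Cov(v)$.

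Since $\psi$ is initial, every $[\chi]\in\Cov(v)$ satisfies $[\psi]\ge[\chi]$, so $\Cov(v)=C(\psi)$ in the notation of Definition \ref{defn7.1}.c. By the discussion immediately preceding Construction \ref{constr7.3} (concretely: pass to a tangibly surjective representative of $[\psi]$ and apply Theorem \ref{thm7.2}.b, using that the lattice of MFCE-relations is complete), $C(\psi)$ is a complete lattice; hence so is $\Cov(v)$. The only step in this whole argument that is more than bookkeeping is confirming that the fibred product $V$ is again supertropical --- in particular axiom \eqref{3.4} --- and, as indicated, that is handled precisely by the defining constraint of $V$; everything else reuses Construction \ref{constr7.3} and the machinery of \S\ref{sec:5}--\S\ref{sec:7} verbatim.
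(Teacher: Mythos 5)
Your proof is correct and is essentially the paper's own argument: the paper likewise chooses a family of covers representing every element of $\Cov(v)$, reruns Construction \ref{constr7.3} with it to obtain a cover dominating all of them (hence initial), and concludes $\Cov(v)=C(\psi)$ is a complete lattice via the MFCE-lattice discussion. You merely make explicit two points the paper leaves implicit --- that Construction \ref{constr7.3} never uses the dominating cover $\varphi$, and that initiality extends from tangibly surjective covers to arbitrary ones via Remark \ref{rem4.4} and transitivity of dominance --- which is fine.
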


\begin{proof} Let $(\psi_i\bigm| i\in I)$ be a family of coverings
of $v$ which represents every element of the set $\Cov(v).$ Now
repeat Construction \ref{constr7.3} with this family. It gives us
a covering $\psi: R\to V$ of $v$ which dominates all $\psi_i;$
hence is an initial covering of $v.$ Of course, $C(\psi)=\Cov(v),$
and thus $\Cov(v)$ is a complete lattice.\end{proof}

\begin{notation}\label{not7.6} If $v: R\to M$ is any
\m-valuation, let $\varphi_v: R\to U(v)$, denote a fixed tangibly
surjective initial supervaluation covering $v.$ If $v$ is a
valuation, we choose for $\varphi_v$ the supervaluation
constructed in Example  \ref{examp4.5}.\end{notation}

Notice that $\varphi_v$ is unique up to unique isomorphism over
$M,$ i.e., if $\psi: R\to V$ is another surjective initial cover
of  $v$, there exists a unique semiring isomorphism $\alpha:
U(v)\osr V$ which restricts to the identity on $M.$ We call
$\varphi_v$ ``\bfem{the}'' \bfem{initial cover} of $v.$ The
lattice $\Cov(v)$ coincides with $C(\varphi_v).$

Given a supervaluation $\varphi: R\to U$ or an \m-valuation $v:
R\to M$, we view the lattice $C(\varphi)$ and $\Cov(v)$ as a
measure of complexity of $\varphi$ and $v$, respectively, and thus
make the following formal definition.

\begin{defn}\label{defn7.7} We call the isomorphism class of the
lattice $C(\varphi)$ the \bfem{lattice complexity} of the
supervaluation $\varphi$ and denote it by $\lc(\varphi).$ In the
same vein we call the isomorphism class of the lattice $\Cov(v)$
the \bfem{tropical  complexity} of the \m-valuation $v$ and denote
it by $\trc(v).$ We have $\trc(v)=\lc(\varphi_v).$\end{defn}

The word ``complexity" in Definition \ref{defn7.7} should not be
taken too seriously. Usually a ``measure of complexity" has values
in natural numbers or, more generally, in some well understood
fixed ordered set. The isomorphism classes of lattices are not
values of this kind. Our idea behind the definition  is that, if
you are given a function $m$ on the class of lattices which
measures (part of) their complexity in some way, then $m\circ
\lc,$ resp. $m\circ \trc,$ is such a function on the class of
supervaluations, resp. \m-valuations.

\begin{thm}\label{thm7.8} If $\varphi: R\to U$ and $\varphi':
R'\to U$ are tangibly surjective supervaluations with values in
the same supertropical semiring $U$, then
$\lc(\varphi)=\lc(\varphi').$\end{thm}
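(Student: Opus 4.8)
The plan is to reduce the statement entirely to Theorem \ref{thm7.2}(b), which exhibits the target semiring $U$ as the only datum that governs $C(\varphi)$ up to isomorphism.

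First I would note that, being a tangibly surjective supervaluation, $\varphi$ is a tangibly surjective covering of the \m-valuation $v := e\varphi : R \to eU$, and likewise $\varphi'$ is a tangibly surjective covering of $v' := e\varphi' : R' \to eU$; the relevant target bipotent semirings coincide, both being the ghost ideal $eU$. Theorem \ref{thm7.2}(b), applied once to $\varphi$ and once to $\varphi'$, then yields anti-isomorphisms of posets
$$ \MFC(U) \longrightarrow C(\varphi), \qquad E \longmapsto [\varphi/E], $$
$$ \MFC(U) \longrightarrow C(\varphi'), \qquad E \longmapsto [\varphi'/E]. $$
The crucial point is that the source poset $\MFC(U)$ is literally the same in both cases, since an MFCE-relation on $U$ is defined purely in terms of the multiplication of $U$ and the idempotent $e_U$, with no reference to $R$, $R'$, or the supervaluations themselves.

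Next I would compose the inverse of the first anti-isomorphism with the second, obtaining a bijection $C(\varphi) \to C(\varphi')$. Being a composite of two order-reversing maps, it is order-preserving, and the same argument applied to its inverse shows that the inverse is order-preserving as well; hence it is an isomorphism of posets, and therefore of lattices, since $C(\varphi)$ and $C(\varphi')$ are complete lattices by the remarks following Theorem \ref{thm7.2}. Thus $C(\varphi)$ and $C(\varphi')$ lie in the same isomorphism class, which is exactly the assertion $\lc(\varphi) = \lc(\varphi')$.

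I do not anticipate any genuine obstacle: all the substantive work sits inside Theorem \ref{thm7.2}(b), whose content is precisely that the lattice of MFCE-relations is an invariant of $U$ alone. The only new ingredient is the elementary observation that the composite of two anti-isomorphisms of ordered sets is an isomorphism.
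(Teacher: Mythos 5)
Your argument is correct and is exactly the paper's own proof: both $C(\varphi)$ and $C(\varphi')$ are anti-isomorphic to $\MFC(U)$ via Theorem \ref{thm7.2}, hence isomorphic to each other. You have merely spelled out the composition of the two anti-isomorphisms, which the paper leaves implicit.
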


\begin{proof} Both lattices $C(\varphi)$ and $C(\varphi')$ are
anti-isomorphic to $\MFC(U);$ hence are isomorphic.\end{proof}

This result is quite remarkable, since it says that the lattice
complexity of a surjective supervaluation $\vrp: R \to U$ depends
only on the isomorphism class of the target semiring $U$.

\begin{examp}\label{examp7.9} Let $\varphi: R\to U$ be a tangibly
surjective supervaluation. The identity \\ $\id_U: U\to U$ is also
a supervaluation. It is the initial cover of the ghost map $\nu_U:
U\to eU.$ We have $\lc(\varphi)=\trc(\nu_U).$\end{examp}

\section{Orbital equivalence relations}\label{sec:8}

Our main goal in this section is to introduce and study a special
kind of MFCE-relations on supertropical semirings, which seems to
be more accessible than MFCE-relations in general. But for use in
later sections, we will define  more generally ``orbital"
equivalence relations on supertropical semirings. They are
multiplicative but not necessarily fiber conserving. The relations
we are looking  for here then will be the orbital \MFCE-relations.

In the following  $U$ is a supertropical semiring, and $M:=eU$
denotes its ghost ideal. We always assume that $\mathcal T(U)$ is
not empty, i.e., $e\ne1.$ We introduce the set
$$S(U):=\{x\in U\bigm| x\mathcal T(U)\subset \mathcal T(U)\}.$$
This is a subset of $\mathcal T(U)$ closed under multiplication
and containing the unit element $1_U;$ hence is a monoid.

The monoid $S(U)$ operates on the sets $U$ and $\mathcal T(U)$ by
multiplication. If $\mathcal T(U)$ itself is closed under
multiplication then $S(U)=\mathcal T(U).$

Let $G$ be a submonoid of $S(U).$ Then also $G$ operates on $U$
and on $\mathcal T(U).$ For any $x\in U$ we call the set $Gx$ the
\bfem{orbit} of $x$ under $G$ (as common at least for $G$ a
group). We define a binary relation $\sim_G$ on $U$ as follows:
$$x\sim_Gy\quad\Leftrightarrow\quad \exists g,h\in G: gx=hy.$$
Thus $x\sim_G y$ iff the orbits $Gx$ and $Gy$ intersect. Clearly
this is an equivalence relation on $U,$ which is multiplicative,
i.e., obeys the rule \eqref{6.3} from \S$6$. We denote this
equivalence relation by $E(G).$

The relation $E(G)$ on $U$ is MFCE, i.e., obeys also the rule
\eqref{6.4} from \S6, iff $G$ is contained in the ``unit-fiber''
$$\mathcal T_e(U):=\{x\in \mathcal T(U)|ex=e\}$$ of $\mathcal T(U).$ The biggest such monoid
is the unit fiber
$$S_e(U):=\{g\in S(U)\bigm| eg=e\}=\mathcal T_e(U)\cap S(U)$$
of $S(U)$.

\begin{examp}\label{examp8.1} Assume that $R$ is a field and $v:
R\to \Gamma\cup\{0\}$ is  a surjective valuation on $R.$ $\{$In
classical terms, $v$ is a Krull valuation on $R$ with value group
$\Gamma.\}$ Let
$$U:=U(v)=(R\setminus\{0\})\ \dot\cup\ \Gamma\ \dot\cup\ \{0\},$$
cf.~Definition \ref{defn4.6}. Then $S(U)$ is the multiplicative
group $R^*=R\setminus\{0\}$ of the field $R,$ and $S_e(U)$ is the
group $\mathfrak o_v^*$ of units of the valuation domain
$$\mathfrak o_v:=\{x\in R\bigm| v(x)\le 1\}.$$
\end{examp}

\begin{defn}\label{defn8.2} We call an equivalence relation  $E$ on
the supertropical semiring $U$ \bfem{orbital} if $E=E(G)$ for some
submonoid $G$ of $S(U).$ We denote the set of all orbital
equivalence relations on $U$ by $\Orb(U)$ and the subset
$\Orb(U)\cap \MFC(U),$ consisting of the orbital MFCE-relations on
$U,$ by $\OFC(U).$ $\{$``OFC" alludes to ``orbital fiber
conserving".$\}$ Consequently, we call the elements of $\OFC(U)$
the \bfem{orbital fiber conserving equivalence relations} on $U$,
or \bfem{OFCE-relations} for short.\end{defn}

\begin{examp}\label{examp8.3}
It is evident that $E(S(U))$ is the coarsest orbital equivalence
relation and $F:=E(S_e(U))$ is the coarsest OFCE-relation on $U.$
Assume now that $U$ is a supertropical domain. Then $S(U)=\mathcal
T(U),$\ $S_e(U)=\mathcal T_e(U),$ and $\mathcal G(U)=e\mathcal
T(U).$ \ $E(S(U))$ has just $3$ equivalence classes, namely,
$\mathcal T(U),$ $\mathcal G(U)$ and $\{0\}.$ On the other hand,
$F$ is finer than the MFCE-relation $\tE$ introduced in Example
\ref{examps6.4}.v, whose equivalence classes in $\mathcal T(U)$
are the tangible fibers of the ghost map $\nu_U.$ Very often $\tE$
is not orbital; hence $F\subsetneqq \tE.$
\end{examp}

\begin{subexamp}\label{subexamp8.4}
Let $R=k[x]$ be the polynomial ring in one variable $x$ over a
field $k.$ Choose a real number $\vartheta$ with $0<\vartheta<1,$
and let $v$ be the surjective valuation on $R$ defined by
$$v(f)=\vartheta^{\deg f}.$$
Thus, $v: R\twoheadrightarrow G\cup\{0\}$ with $G$ the monoid
$\{\vartheta^n\bigm| n\in\mathbb N_0\}\subset\mathbb R.$ Finally,
take
$$U:=U(v)=(R\setminus\{0\})\cup G\cup\{0\},$$
cf.~Definition \ref{defn4.6}. We have $S(U)=R\setminus\{0\}$ and
$$S_e(U)=\{f\in R\bigm| \deg f=0\}=k\setminus\{0\},$$
the set of nonzero constant polynomials. If $f,g\in \mathcal T(U)$
are given with $ef=eg,$ i.e., $\deg f=\deg g,$ then $f\sim_F g$
iff $g=cf$ with $c$ a constant $\ne0.$ Thus, the set of
$F$-equivalence classes in $\mathcal T(U)$ can be identified with
the set of monic polynomials in $k[x],$ while the
$\tE$-equivalence classes are the sets $\{f\in k[x]\bigm|\deg
f=n\}$ with $n$ running  thorough $\mathbb N_0.$ For $n=0$ this
$\tE$-equivalence class is also an $F$-equivalence class, while
for $n>0$ it decomposes into infinitely many $F$-equivalence
classes if the field $k$ is infinite, and into $|k|^n$
$F$-equivalence classes if $k$ is finite.

The semiring $U/F$ (cf. \S6) can be identified with the
subsemiring $V$ of $U$, which has as tangible elements the monic
polynomials in $k[x]$ and has the same ghost ideal $eV=eU$ as~$U.$
\hfill\quad \qed
\end{subexamp}

Different submonoids $G,H$ of $S(U)$ may yield the same orbital
equivalence relation $E(G)=E(H).$ But this ambiguity can be tamed.

\begin{prop}\label{prop8.5} If $G$ is a submonoid of $S(U)$, then
$$G':=\{x\in S(U)\bigm| \exists g\in G: gx\in G\}$$ is a submonoid
of $S(U)$ containing $G,$ and $E(G)=E(G').$ If $G\subset S_e(U)$
then $G'\subset S_e(U).$\end{prop}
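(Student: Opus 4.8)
The plan is to verify each of the four assertions about $G'$ in turn, all of which are essentially bookkeeping with the definition $x \sim_G y \Leftrightarrow \exists g,h \in G:\ gx = hy$. First I would check that $G \subseteq G'$: for any $g \in G$ we have $g \cdot g = g^2 \in G$ (since $G$ is a submonoid, hence closed under multiplication), so $g$ qualifies for membership in $G'$ with witness $g$ itself; and $1_U \in G \subseteq G'$. Note also every element of $G'$ lies in $S(U)$ by construction, so $G'$ is a set of elements of $S(U)$.

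Next I would show $G'$ is a submonoid of $S(U)$. We already have $1_U \in G'$. For closure under multiplication, take $x, y \in G'$, so there exist $g, h \in G$ with $gx \in G$ and $hy \in G$. Then $(gh)(xy) = (gx)(hy) \in G$ since $G$ is multiplicatively closed, and $gh \in G$, so $xy \in G'$. (Commutativity of $U$, in force from \S3 on, is used to rearrange the product.) This is routine.

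The substantive point is $E(G) = E(G')$. The inclusion $E(G) \subseteq E(G')$ is immediate from $G \subseteq G'$. For the reverse, suppose $x \sim_{G'} y$, i.e.\ there are $a, b \in G'$ with $ax = by$. By definition of $G'$ there exist $g, h \in G$ with $ga \in G$ and $hb \in G$. Multiply the equation $ax = by$ by $gh$: we get $(gh a) x = (gh b) y$, i.e.\ $(h \cdot ga)x = (g \cdot hb) y$, and both $h \cdot ga$ and $g \cdot hb$ lie in $G$. Hence $x \sim_G y$. So $E(G') \subseteq E(G)$, giving equality. I expect this step to be the only place requiring a moment's thought, namely choosing to multiply through by $gh$ so as to land both sides back inside $G$ simultaneously.

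Finally, the claim that $G \subseteq S_e(U)$ implies $G' \subseteq S_e(U)$: recall $S_e(U) = \{x \in S(U) \mid ex = e\}$. Let $x \in G'$, with $g \in G$ and $gx \in G$. Since $G \subseteq S_e(U)$ we have $eg = e$ and $e(gx) = e$. Applying the ghost map (a multiplicative map, by Remark~\ref{rem3.12}), $e(gx) = (eg)(ex) = e \cdot (ex) = e(ex)$; but $x \in S(U) \subseteq \mathcal T(U)$, and one checks $e(ex) = ex$ since $ex \in M$ and $e$ is the unit of the bipotent semiring $M = eU$. Thus $ex = e(gx) = e$, so $x \in S_e(U)$. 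This completes the proof. None of the four parts presents a real obstacle; the mild subtlety is simply keeping track, in the $E(G') \subseteq E(G)$ direction, that a single multiplier $gh$ simultaneously pulls both $a$ and $b$ back into $G$.
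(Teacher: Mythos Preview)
Your proof is correct and follows essentially the same approach as the paper: both argue $G\subset G'$ and submonoid closure directly, both handle $E(G')\subset E(G)$ by multiplying the equation $ax=by$ (with $a,b\in G'$) through by the product of the two witnesses from $G$ so as to pull both coefficients back into $G$, and both deduce $G'\subset S_e(U)$ from $e(gx)=(eg)(ex)=e\cdot ex=ex$. The only cosmetic differences are that the paper uses the simpler witness $1\cdot g=g\in G$ for $G\subset G'$, and computes $ex=e$ in one line without the (harmless) detour through $e(ex)=ex$.
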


\begin{proof}
a) It is immediate that $G'$ is a submonoid of $S(U)$ and that
$G\subset G'.$ Given $x\in G'$ we have elements $g,h\in G$ with
$gx=h.$ If in addition $G\subset S_e(U)$, then $e=eh=(eg)(ex)=ex;$
hence $x\in S_e(U).$ Thus $G'\subset S_e(U).$ It follows from
$G\subset G'$ that $E(G)\subset E(G').$

b) Let $x,y\in U$ be given with $x\sim_{G'}y.$ We have elements
$g_1',g_2'$ in $G'$ with $g_1'x=g_2'y.$ We furthermore have
elements $h_1,h_2$ in $G$ with $h_1g_1'=g_1\in G$ and
$h_2g_2'=g_2\in G.$ Now
$$g_1h_2x=h_1h_2g_1'x=h_1h_2g_2'y=h_1g_2y.$$
Thus $x\sim_G y.$ This proves $E(G')\subset E(G);$ hence
$E(G)=E(G').$
\end{proof}

\begin{defn}\label{defn8.6} We call $G'$ the \bfem{saturation} of
the monoid $G$ (in $U),$ and we say that $G$ is saturated if
$G=G'.$\end{defn}

It is immediate that $(G')'=G'.$ Thus $G'$ is always saturated.

\begin{examp}\label{examp8.7} If $S(U)$ happens to be a group,
then the saturation of a submonoid $G$ of $S(U)$ is just the
subgroup of $S(U)$ generated by $G.$ Indeed, the elements of $G'$
are the $x\in S(U)$ with $g_1x=g_2$ for some $g_1,g_2\in G,$ i.e.,
the elements $g_1^{-1}g_2$ with $g_1,g_2\in G.$\end{examp}

\begin{prop}\label{prop8.8}
Let $E$ be a multiplicative equivalence relation on $U.$
\begin{enumerate}\item[a)] The set
$$G_E:=\{x\in S(U)\bigm|x\sim_E1\}$$ is a saturated submonoid of
$S(U).$

\item[b)] If $E=E(H)$ for some submonoid $H$ of $S(U)$,
then $G_E$ is the saturation $H'$ of $H.$

\item[c)] In general,
$E(G_E)$ is the coarsest orbital equivalence relation on $U$ which
is finer than~$E.$

\item[d)] If $E$ is MFCE then $G_E\subset
S_e(U),$ and $E(G_E)$ is the coarsest OFCE-relation on $U$ which
is finer than $E.$
\end{enumerate}
\end{prop}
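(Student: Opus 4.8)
The plan is to establish (a)--(d) in turn, using only the multiplicativity of $E$ (rule \eqref{6.3}), the fiber-conserving property for (d), and the trivial monotonicity $G_1\subset G_2\Rightarrow E(G_1)\subset E(G_2)$ for submonoids of $S(U)$. For (a) I would first check that $G_E$ is a submonoid of $S(U)$: it contains $1_U$ by reflexivity of $E$ and lies in $S(U)$ by definition, and if $x,y\in G_E$ then multiplying $x\sim_E 1$ by $y$ gives $xy\sim_E y\sim_E 1$, while $xy\in S(U)$ because $S(U)$ is closed under multiplication, so $xy\in G_E$. To see that $G_E$ is saturated, note that $G_E\subset(G_E)'$ always holds; conversely, if $x\in S(U)$ and $gx\in G_E$ for some $g\in G_E$, then from $g\sim_E 1$ and multiplicativity one gets $gx\sim_E x$, hence $x\sim_E gx\sim_E 1$, i.e.\ $x\in G_E$.

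Part (b) is a mechanical unwinding of definitions: $x\sim_{E(H)}1$ means $gx=h\cdot1=h$ for some $g,h\in H$, which is precisely the condition ``$gx\in H$ for some $g\in H$'' defining $H'$; hence $G_{E(H)}=H'$. For (c) I would show first that $E(G_E)$ is finer than $E$: if $x\sim_{G_E}y$, choose $g,h\in G_E$ with $gx=hy$, then multiplying $g\sim_E 1$ by $x$ and $h\sim_E 1$ by $y$ gives $x\sim_E gx=hy\sim_E y$. Next, given any orbital relation $E(G)$ (with $G$ a submonoid of $S(U)$) that is finer than $E$, for each $g\in G$ we have $1\cdot g=g\in G$, so $g\sim_G 1$, hence $g\sim_E 1$, hence $g\in G_E$; thus $G\subset G_E$ and therefore $E(G)\subset E(G_E)$. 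This makes $E(G_E)$ the coarsest orbital relation finer than $E$.

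Finally, for (d), assuming $E$ is fiber conserving, I would note that $x\in G_E$ implies $x\sim_E 1$, hence $ex=e$, hence $x\in S_e(U)$; so $G_E\subset S_e(U)$, and by the criterion recalled just before Definition~\ref{defn8.2} the relation $E(G_E)$ is then MFCE, i.e.\ an OFCE-relation. Since (c) already identifies $E(G_E)$ as the coarsest \emph{orbital} relation finer than $E$, and it lies in the smaller class of OFCE-relations, it is a fortiori the coarsest OFCE-relation finer than $E$. I expect no real obstacle here; the only step that is not purely formal is the saturatedness in (a), where $g$ has to be ``cancelled'' by multiplying the relation $g\sim_E 1$ through by $x$ and appealing to transitivity, since no cancellation law is available in $U$.
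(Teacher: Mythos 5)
Your proof is correct and follows essentially the same route as the paper: the same multiplication-and-transitivity arguments for the submonoid, saturation, and fineness claims in (a)--(c), and the same reduction of (d) to (c) after observing $G_E\subset S_e(U)$. The only cosmetic differences (proving $G\subset G_E$ directly in (c) rather than via $H\subset G_{E(H)}\subset G_E$, and checking MFCE-ness of $E(G_E)$ from $G_E\subset S_e(U)$ rather than noting that multiplicative refinements of an MFCE-relation are MFCE) do not change the substance.
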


\begin{proof} a): If $x,y\in G_E$ then $x\sim_E1,$ $y\sim_E1;$
hence $xy\sim_Ey\sim_E1,$ thus $xy\in G_E.$ This proves that $G_E$
is a submonoid of $S(U).$ Let $x\in G_E'$ be given. We have
elements $g,h\in G_E$ with $hx=g.$ It follows from $g\sim_E1,$
$h\sim_E1$ that
$$x\sim_Ehx=g\sim_E1.$$
Thus $x\in G_E.$ This proves that $G_E'=G_E.$

b): Assume that $E=E(H)$ with $H$ a submonoid of $S(U).$ For $x\in
S(U)$ we have
$$x\sim_E1\quad\Leftrightarrow\quad \exists h_1,h_2\in H:
h_1x=h_2\quad\Leftrightarrow\quad x\in H'.$$ Thus $G_E=H'.$

c): Let $G:=G_E.$ If $x\sim_Gy$ then $g_1x=g_2y$ with some
$g_1,g_2\in G.$ From $g_1\sim_E1,$ $g_2\sim_E1,$ we conclude that
$$x\sim_E g_1x=g_2y\sim_Ey.$$
Thus $E(G)\subset E.$ If $H$ is any submonoid of $S(U)$ with
$E(H)\subset E,$ then
$$H\subset G_{E(H)}\subset G_E=G.$$
Thus $E(H)\subset E(G).$

d): Assume that $E$ is MFCE. If $x\in G_E$ then we conclude from
$x\sim_E1$ that $ex=e.$ Thus $G_E\subset S_e(U).$ Every
multiplicative equivalence relation on $U$ which is finer than $E$
is MFCE. In particular, this holds for orbital relations. We learn
from c) that $E(G_E)$ is the coarsest OFCE-relation on $U$ finer
than $E.$
\end{proof}

We denote the set of saturated submonoids of $S(U)$ by Sat$(S(U))$
and the set of saturated submonoids of $S_e(U)$ by Sat$(S_e(U)).$

\begin{schol}\label{schol8.9} Propositions \ref{prop8.5} and
\ref{prop8.8}
 imply that we have an isomorphism of posets $H~\mapsto~E(H)$ from  ${\rm Sat}(S(U))$ to
 $\Orb(U),$
 mapping ${\rm Sat}(S_e(U))$ onto $\OFC(U),$ with inverse map $E\mapsto G_E.$ $\{$Here, of course, both sets
${\rm Sat}(S(U))$ and $\Orb(U)$ are ordered by
inclusion.$\}$\end{schol}

It is fairly obvious that Sat$(S(U))$ is a complete lattice.
Indeed, the supremum of a family $(H_i\bigm|i\in I)$ of saturated
submonoids of $S(U)$ is the saturation $H'$ of the submonoid of
$S(U)$ generated by the $H_i,$ while the infimum of this family is
the saturation $(\bigcap_i H_i)'$ of the intersection of the
family. Thus also $\Orb(U)$ is a complete lattice. It follows that
Sat$(S_e(U))$ and $\OFC(U)$ are complete sublattices of
Sat$(S(U))$ and $\Orb(U)$, respectively.

Let Mult$(U)$ denote the set of all multiplicative equivalence
relations on $U,$ partially ordered by inclusion.  In \S7 we have
seen that the subposet $\MFC(U)$ of Mult$(U),$ consisting of the
MFCE-relations on $U,$ is a complete lattice. In the same way one
proves that Mult$(U)$ itself is a complete lattice, the supremum
and infimum of a family in Mult$(U)$ being given in exactly the
same way as in \S7 for MFCE-relations. This makes it also evident
that $\MFC(U)$ is a complete sublattice of Mult$(U).$

We doubt whether $\Orb(U)$ and $\OFC(U)$ are always sublattices of
Mult$(U)$ and  $\MFC(U)$, respectively. But we have the following
partial result.

\begin{prop}\label{prop8.10} Let $(G_i\bigm|i\in I)$ be a family
of submonoids of $S(U)$, and let $G$ denote the monoid generated
by this family in $S(U).$ Then, in the lattice ${\rm Mult}(U),$
$$E(G)=\bigvee_{i\in I} E(G_i).$$
$\{$N.B. Thus the same holds in $\MFC(U)$, if every $G_i\subset
S_e(U).\}$
\end{prop}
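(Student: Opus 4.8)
The plan is to show that $E(G)$ is the least upper bound of the family $(E(G_i))_{i \in I}$ in $\mathrm{Mult}(U)$, by verifying the two defining properties of a supremum. First I would show that $E(G)$ is an \emph{upper bound}: since $G_i \subset G$ for each $i$, and enlarging the monoid can only identify more elements, the inclusion $E(G_i) \subset E(G)$ is immediate from the definition $x \sim_{G_i} y \Leftrightarrow \exists g,h \in G_i : gx = hy$. (It should be checked along the way that $E(G)$ is indeed a multiplicative equivalence relation, but this is exactly the observation already made in \S8 that $E(H)$ lies in $\mathrm{Mult}(U)$ for any submonoid $H$ of $S(U)$; here $G$ is such a submonoid by construction.)

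Next I would show that $E(G)$ is the \emph{least} upper bound. Let $E$ be any multiplicative equivalence relation on $U$ with $E(G_i) \subset E$ for all $i$; I must prove $E(G) \subset E$. Suppose $x \sim_G y$, so $gx = hy$ for some $g, h \in G$. Since $G$ is the monoid generated by the $G_i$, both $g$ and $h$ are finite products of elements drawn from the various $G_i$. The key step is then to relate membership of such a product to $E$-equivalence with $1$: because each factor $g_k$ lies in some $G_{i(k)}$, we have $g_k \sim_{G_{i(k)}} 1$, hence $g_k \sim_E 1$; multiplicativity of $E$ then gives, by induction on the number of factors, that $g \sim_E 1$ and likewise $h \sim_E 1$ (using that $a \sim_E 1$ and $b \sim_E 1$ imply $ab \sim_E b \sim_E 1$, exactly as in the proof of Proposition \ref{prop8.8}.a). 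Consequently $x \sim_E gx = hy \sim_E y$, so $x \sim_E y$. This gives $E(G) \subset E$, completing the argument.

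The routine but slightly delicate point — the step I expect to require the most care — is the inductive passage from ``each generator $g_k$ of $G$ satisfies $g_k \sim_E 1$'' to ``every element $g$ of $G$ satisfies $g \sim_E 1$.'' One must be careful that $G$ is the monoid generated by $\bigcup_i G_i$ (so its elements are \emph{finite products} of generators, including the empty product $1$), and that only the multiplicativity rule \eqref{6.3} for $E$ — not fiber conservation — is used, so that the statement is valid in all of $\mathrm{Mult}(U)$ and not merely in $\MFC(U)$. The parenthetical remark in the statement, that the same identity $E(G) = \bigvee_i E(G_i)$ then holds in $\MFC(U)$ when every $G_i \subset S_e(U)$, follows at once: in that case $G \subset S_e(U)$ as well, so $E(G)$ and all the $E(G_i)$ lie in the complete sublattice $\MFC(U)$ of $\mathrm{Mult}(U)$ (by Scholium \ref{schol8.9} and the remarks following it), and a supremum computed in a sublattice agrees with the one computed in the ambient lattice once it is known to lie in the sublattice.
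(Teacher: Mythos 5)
Your proposal is correct and follows essentially the same route as the paper: the easy inclusion $E(G_i)\subset E(G)$, and for the reverse direction the decomposition of $g,h\in G$ into finitely many generators from $\bigcup_i G_i$, using multiplicativity and transitivity to absorb each factor. The only cosmetic difference is that you verify the universal property against an arbitrary upper bound $E$, whereas the paper argues directly with the supremum $F=\bigvee_i E(G_i)$ (showing $z\sim_F g'z$ for each generator $g'$), which amounts to the same computation.
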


\begin{proof} Let $F:=\bigvee_i E(G_i)$ in Mult$(U).$ Of course,
$F\subset E(G)$ since each $E(G_i)\subset E(G).$ Let $x,y\in U$ be
given with $x\sim_Gy.$ We want to conclude that $x\sim_Fy,$ and
then will be done.

We have $gx=hy$ with elements $g,h$ of $G.$ Now $g$ and $h$ are
products of elements in $\bigcup_i G_i,$ and for any $g'\in
\bigcup_i G_i$ and $z\in U$, we have $z\sim_Fg'z.$ It follows that
$x\sim_Fgx$ and $y\sim_Fhy;$ hence $x\sim_Fy.$
\end{proof}

We present an important case where $\OFC(U)$ and $\MFC(U)$ nearly
coincide.

\begin{thm}\label{thm8.11} Assume that every $x\in \mathcal T(U)$ is
invertible; hence $\mathcal T(U)$ is a group under multiplication.
$\{$The main case is that $U$ is a supertropical semifield.\} Let
$E$ be an MFCE-relation on $U.$ Then either $E=E(\nu),$ i.e., $E$
is the top element of $\MFC(U)$ (cf.~Example~\ref{examps6.4}.ii),
or $E$ is orbital.
\end{thm}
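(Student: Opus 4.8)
The plan is to prove that if $E$ is an MFCE-relation on $U$ with $E\ne E(\nu)$, then $E$ coincides with the orbital relation $E(G_E)$ attached to the saturated submonoid $G_E=\{x\in S(U)\mid x\sim_E 1\}$ of \propref{prop8.8}. Since $\mathcal T(U)$ is a group it is in particular closed under multiplication, so $S(U)=\mathcal T(U)$ and $S_e(U)=\mathcal T_e(U)$ is a subgroup of $\mathcal T(U)$. By \propref{prop8.8}.d we already know $G_E\subseteq S_e(U)$, that $E(G_E)\in\OFC(U)$, and that $E(G_E)\subseteq E$; so the whole task reduces to proving the reverse inclusion $E\subseteq E(G_E)$, which will exhibit $E$ as orbital.

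The first step I would carry out is the dichotomy $e\sim_E 1\Rightarrow E=E(\nu)$. Granting $e\sim_E 1$, multiplicativity \eqref{6.3} applied with an arbitrary $x\in U$ gives $x\sim_E ex$; hence $ex=ey$ implies $x\sim_E ex=ey\sim_E y$, so $E(\nu)\subseteq E$, and since $E(\nu)$ is the coarsest MFCE-relation (\exampref{examps6.4}.ii) equality follows. Consequently, under the standing hypothesis $E\ne E(\nu)$ I may and will use $e\not\sim_E 1$ freely in what follows.

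Next I would run a case analysis, for a pair $x\sim_E z$, on the type of $x$. If $x=0$, fiber conservation \eqref{6.4} forces $ez=0$, hence $z=0$ by Remark~\ref{remark3.7}. If $x\in\mathcal G(U)$, then \eqref{6.4} gives $ez=ex=x$; if $z$ is ghost this says $z=x$, while if $z$ were tangible, multiplying $x\sim_E z$ by $z^{-1}$ would give $1\sim_E xz^{-1}=e$ (using $x=ez$), contradicting $e\not\sim_E 1$. Finally, if $x\in\mathcal T(U)$, the same considerations rule out $z=0$ and $z\in\mathcal G(U)$, so $z$ is tangible; then \eqref{6.3} applied with $x^{-1}$ gives $zx^{-1}\sim_E 1$, i.e.\ $zx^{-1}\in G_E$, and since $(zx^{-1})\cdot x=z$ together with $1\in G_E$ this says precisely $x\sim_{G_E}z$. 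Assembling the three cases yields $E\subseteq E(G_E)$, hence $E=E(G_E)$ is orbital, finishing the proof.

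I expect the delicate point to be the ghost/tangible part of the case analysis, i.e.\ the assertion that once $e\not\sim_E 1$ no tangible element is $E$-equivalent to a ghost element; this is where both the invertibility of every element of $\mathcal T(U)$ and the non-degeneracy $e\not\sim_E 1$ are genuinely used, whereas the remaining verifications are routine bookkeeping with \eqref{6.3} and \eqref{6.4} (plus the elementary fact that the elements of $S_e(U)$ act as the identity on $eU$).
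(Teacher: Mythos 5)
Your proof is correct and follows essentially the same route as the paper: the dichotomy on whether $1\sim_E e$ (the paper states it as some tangible $x_0\sim_E ex_0$ and reduces to $1\sim_E e$ by invertibility), and in the nondegenerate case showing $E=E(G_E)$ by ruling out tangible--ghost equivalences and translating $x\sim_E z$ for tangibles into $zx^{-1}\sim_E 1$. The explicit appeal to \propref{prop8.8} and the separate treatment of $x=0$ are only cosmetic differences from the paper's argument.
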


\begin{proof} a) Assume that there exists some $x_0\in \mathcal T(U)$ with
$x_0\sim_Eex_0.$ Multiplying by $x_0^{-1}$ we obtain $1\sim_Ee,$
and then obtain $x\sim_Eex$ for every $x\in U.$ Thus $E=E(\nu).$

b) Assume now that $x\not\sim_E ex$ for every $x\in \mathcal T(U)$
(i.e., $E\subset \tE). $ Clearly $S_e(U)=\mathcal T_e(U)$. Let
$$H:=G(E)=\{x\in \mathcal T(U)\bigm| x\sim_E1\}.$$
Then $E(H)\subset E.$ Given $x,y\in U$ with $x\sim_Ey,$ we want to
prove that $x\sim_Hy.$ We have $ex=ey.$ If $x\in eU$ or $y\in eU$,
we conclude that $x=y,$ due to our assumption on $E.$ There
remains the case that both $x$ and $y$ are tangible. Then we infer
from $x\sim_Ey$ that
$$1=x^{-1}x\sim_Ex^{-1}y.$$
Thus $x^{-1}y\in H,$ which implies $x\sim_Hy.$ This completes the
proof that $E=E(H).$
\end{proof}

\begin{cor}\label{cor8.12}
If every element of $\mathcal T(U)$ is invertible, then the poset
$\MFC(U)\setminus\{E(\nu)\}$ is isomorphic to the lattice of
subgroups of $\mathcal T_e(U).$\end{cor}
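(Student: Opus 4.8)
The plan is to assemble this from \thmref{thm8.11}, Scholium~\ref{schol8.9} and \exampref{examp8.7}, so that the argument is essentially a piece of bookkeeping. First I would establish the set-theoretic identity $\MFC(U)\setminus\{E(\nu)\}=\OFC(U)$. The inclusion ``$\subseteq$'' is immediate from \thmref{thm8.11}: an MFCE-relation $E\ne E(\nu)$ is orbital, hence lies in $\Orb(U)\cap\MFC(U)=\OFC(U)$. For the reverse inclusion I must rule out that $E(\nu)$ itself is orbital. The point is that $E(\nu)$ relates $1_U$ --- which is tangible because $e\ne1$ --- to $\nu_U(1_U)=e$, which lies in the ghost ideal $eU$ and hence is not tangible; but for every submonoid $G$ of $S(U)$ the orbit $G\cdot 1_U=G$ is contained in $\mathcal T(U)$ since $S(U)\,\mathcal T(U)\subseteq\mathcal T(U)$, so $1_U$ can be $E(G)$-related only to tangible elements. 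Thus $E(\nu)\ne E(G)$ for every $G$, i.e. $E(\nu)\notin\Orb(U)\supseteq\OFC(U)$, which gives $\OFC(U)\subseteq\MFC(U)\setminus\{E(\nu)\}$.

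Next I would invoke Scholium~\ref{schol8.9}, which already furnishes an isomorphism of posets $H\mapsto E(H)$ from ${\rm Sat}(S_e(U))$ onto $\OFC(U)$, with inverse $E\mapsto G_E$, both sides ordered by inclusion. So everything reduces to recognizing ${\rm Sat}(S_e(U))$ as the lattice of subgroups of $\mathcal T_e(U)$. Here I would first note that the hypothesis forces $S(U)=\mathcal T(U)$ (a product $x\,\mathcal T(U)$ can land in $\mathcal T(U)$ only when $x$ is tangible, since it sits in $\mathcal G(U)$ for $x$ a ghost and equals $\{0\}$ for $x=0$), hence $S_e(U)=\mathcal T_e(U)$; and $\mathcal T_e(U)$ is a subgroup of $\mathcal T(U)$, because applying $\nu_U$ to $xx^{-1}=1$ shows $ex^{-1}=e$ whenever $ex=e$. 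Then \exampref{examp8.7} applies: since $S(U)$ is a group, the saturation of a submonoid is the subgroup it generates, so a submonoid of $S_e(U)$ is saturated exactly when it is a subgroup of $\mathcal T_e(U)$, while every subgroup of $\mathcal T_e(U)$ is visibly a saturated submonoid of $S_e(U)$. Hence ${\rm Sat}(S_e(U))$, ordered by inclusion, \emph{is} the lattice of subgroups of $\mathcal T_e(U)$.

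Combining the two steps gives $\MFC(U)\setminus\{E(\nu)\}=\OFC(U)\cong{\rm Sat}(S_e(U))=\{\text{subgroups of }\mathcal T_e(U)\}$ as posets, and an order isomorphism between lattices is automatically a lattice isomorphism. I expect no real difficulty; the one step that needs genuine attention is the verification that $E(\nu)$ is \emph{not} orbital --- \thmref{thm8.11} only delivers one of the two inclusions in $\MFC(U)\setminus\{E(\nu)\}=\OFC(U)$, and one has to check by hand that the coarsest MFCE-relation does not happen to coincide with an orbital relation (equivalently, that $E(\nu)$ is the unique element of $\MFC(U)$ that must be deleted).
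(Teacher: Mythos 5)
Your argument is correct and is essentially the proof the paper intends: Corollary~\ref{cor8.12} is meant to follow directly from Theorem~\ref{thm8.11}, Scholium~\ref{schol8.9} and Example~\ref{examp8.7}, and your extra verification that $E(\nu)$ is not orbital merely makes explicit a point the paper leaves silent. One small repair: your parenthetical for $S(U)=\mathcal T(U)$ only establishes the unconditional inclusion $S(U)\subseteq\mathcal T(U)$, whereas the inclusion that actually uses the invertibility hypothesis is $\mathcal T(U)\subseteq S(U)$; it holds because if $x,y\in\mathcal T(U)$ and $xy\in eU$, then $y=x^{-1}(xy)\in eU$, a contradiction.
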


\begin{prop}\label{prop8.13} If $R$ is a semifield, then every supervaluation $\vrp: R \to
U$ with $U \neq e U$ is tangible.

\end{prop}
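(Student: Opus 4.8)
The plan is to isolate a one-line lemma about supertropical semirings and then feed in the semifield hypothesis. The lemma I will use is: if $U$ is a supertropical semiring with $U \neq eU$ (equivalently $e := e_U \neq 1_U$, equivalently $\mathcal{T}(U) \neq \emptyset$), then every multiplicatively invertible element of $U$ is tangible. To prove it, suppose $x \in U$ is invertible with $xy = 1$ and, arguing by contradiction, suppose $x$ is a ghost, i.e. $x \in eU$, so that $ex = x$. Multiplying $xy = 1$ by $e$ and using $ex = x$ yields $e = e \cdot 1 = (ex)y = xy = 1$; but $e = 1$ forces $\mathcal{T}(U) = U \setminus eU = \emptyset$, i.e. $U = eU$, contradicting the hypothesis. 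Hence $x \notin eU$, i.e. $x \in \mathcal{T}(U)$.

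Granting the lemma, the proposition follows at once. First, $\varphi(0) = 0$ by SV1, and $0 \in \mathcal{T}(U) \cup \{0\}$. Next, let $a \in R$ with $a \neq 0$; since $R$ is a semifield, $a$ has an inverse $a^{-1} \in R$, and then SV2 and SV3 give $\varphi(a)\varphi(a^{-1}) = \varphi(aa^{-1}) = \varphi(1) = 1$, so $\varphi(a)$ is invertible in $U$. By the lemma (applicable because $U \neq eU$ is part of the hypothesis), $\varphi(a) \in \mathcal{T}(U)$. Therefore $\varphi(R) \subseteq \mathcal{T}(U) \cup \{0\}$, which is precisely the statement that $\varphi$ is tangible.

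I do not expect any real obstacle; the only points requiring a little care are the standard identifications $x \in eU \Leftrightarrow ex = x$ and $e = 1 \Leftrightarrow U = eU \Leftrightarrow \mathcal{T}(U) = \emptyset$, together with the convention that $0$ is allowed among the values of a tangible supervaluation (so that $\varphi(0) = 0$ causes no trouble). If one prefers, the invertibility of $\varphi(a)$ can instead be exploited through the strict \m-valuation $\nu_U$ of Remark~\ref{rem3.12} via $\nu_U(\varphi(a))\,\nu_U(\varphi(a^{-1})) = \nu_U(1) = e$, but the direct computation above is shorter and sidesteps the degenerate case $e = 0$.
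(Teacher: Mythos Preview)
Your proof is correct and is essentially the paper's own ``more direct'' proof: for $a\neq 0$ you use $\varphi(a)\varphi(a^{-1})=1$ together with $1_U\neq e_U$ to force $\varphi(a)\in\mathcal T(U)$, merely spelling out the easy lemma that invertible elements in a supertropical semiring with $e\neq 1$ are tangible. The paper also notes an alternative derivation via Theorem~\ref{thm8.11} and the initial cover $\varphi_v$, but your argument coincides with its preferred short route.
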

This follows from Theorem \ref{thm8.11} applied to the target
$U(v)$ of the initial supervaluation  $\vrp_v$ of $v := e \vrp $,
since for any orbital equivalence relation $E$ on $U(v)$ the
transmission $\pi_E$ sends tangibles to tangibles. A more direct
proof runs as follows.

\begin{proof} Let $a \in R $, $a \neq 0$. Then
$$ \vrp(a) \vrp(a^{-1}) = \vrp (1) = 1.$$
Since $1_U \neq e_U$ this forces $\vrp(a)$ to be tangible.
\end{proof}
N.B. The argument shows more generally that any supervaluation on
a semiring sends units to tangible elements,provided not the whole
target is ghost. \vskip 3mm

In the case that $R$ is a field the following in now amply clear.
\begin{schol}\label{schol8.13}
If $v$ is a Krull valuation on a field $R$ with value group
$\Gamma$, then the lattice $\Cov(v)$ of equivalence classes of
supervaluations covering $v$ is anti-isomorphic to the lattice of
subgroups of the unit group $\mathfrak o_v^*$ of the valuation
domain $\mathfrak o_v:=\{x\in R\bigm| v(x)\le 1\},$ augmented by
one element at the top.
\end{schol}

\section{The \UTGR; strong supervaluations}\label{sec:10}

Let $U$ be any supertropical semiring. If $x,y \in U$, it has
become customary to write
$$ x = y + \text{ghost}$$
if $x$ equals $y$ plus an unspecified ghost element (including
zero). In more formal terms we have a binary relation $\lmodg$ on
$U$ defined as follows:

\begin{defn}\label{defn10.1}
$$x \lmodg y \ \Leftrightarrow \ \exists z \in e U \ \text{with} \ x = y
+z.$$ We call $\lmodg$ the \textbf{ghost surpassing relation} on
$U$ or \textbf{\UTG-relation}, for short.

\end{defn}

The \UTG-relation seems to be at the heart of many supertropical
arguments. Intuitively $x \lmodg y$  means that $x$ coincides with
$y$ up to some ``negligible'' or ``near-zero'' element,  namely a
ghost element. But we have to handle the \UTG-relation with care,
since it is not symmetric. In fact it is antisymmetric, see below.

The \UTG-relation is clearly transitive:
$$ x \lmodg y, \ y \lmodg z \ \ \Rightarrow  \  x \lmodg z.$$
It is also compatible with addition and multiplication: For any $z
\in U$, $x \lmodg y$ implies $x + z \lmodg y + z$, and $x  z
\lmodg y z$.

We observe the following further properties of this subtle binary
relation.

\begin{remark}\label{rem10.2} Let $x,y \in U$.
\begin{enumerate} \eroman
    \item  $x = y  \ \Rightarrow  \ x \lmodg y  \ \Rightarrow \ \nu(x ) \geq
    \nu(y)$.
    \item  If $x \in \tT(U) \cup \{ 0 \}$, then
$x \lmodg y  \ \Leftrightarrow \ x = y.
             $

\item If $x \in \tG(U) \cup \{ 0 \}$, then
$ x \lmodg y  \ \Leftrightarrow \ \nu(x) \geq \nu(y).
$

\item $x \lmodg 0  \ \text{iff } \ x = eU$.

\end{enumerate}

\end{remark}

\begin{lemma}\label{lem10.3}
The \UTG-relation is antisymmetric, i.e.;
$$ x \lmodg y,  \ y \lmodg x \ \Rightarrow  \ x  =  y.
$$
\end{lemma}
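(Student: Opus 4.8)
I want to show that if $x \lmodg y$ and $y \lmodg x$, then $x = y$. By Definition \ref{defn10.1} this gives ghost elements $z, w \in eU$ with $x = y + z$ and $y = x + w$. Substituting one into the other, $x = x + w + z$, so adding $w+z$ to $x$ changes nothing; similarly $y = y + z + w$. The strategy is a case analysis on whether $x$ (equivalently $y$) is tangible or ghost, reducing everything to Remark \ref{rem10.2}.

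First I would dispose of the easy cases using Remark \ref{rem10.2}. If $x \in \tT(U) \cup \{0\}$, then part (ii) of that remark says $x \lmodg y \Leftrightarrow x = y$, and we are done immediately. By symmetry, the same applies if $y \in \tT(U) \cup \{0\}$. So I may assume both $x$ and $y$ are ghost, i.e. $x, y \in eU$ (and nonzero, though that is not needed). Now I invoke part (iii) of Remark \ref{rem10.2}: for $x \in \tG(U) \cup \{0\}$, $x \lmodg y \Leftrightarrow \nu(x) \geq \nu(y)$. Applying this to $x \lmodg y$ gives $\nu(x) \geq \nu(y)$, and applying it to $y \lmodg x$ gives $\nu(y) \geq \nu(x)$. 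Since $eU$ is a bipotent semidomain (Remarks \ref{remark3.9}.i), its ordering is total and antisymmetric, so $\nu(x) = \nu(y)$. But $x, y \in eU$ means $x = ex = \nu(x)$ and $y = ey = \nu(y)$, hence $x = y$.

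Alternatively, and perhaps more cleanly, one can avoid splitting into "both ghost" by noting that $x \lmodg y$ always forces $\nu(x) \geq \nu(y)$ (Remark \ref{rem10.2}.i), and symmetrically $\nu(y) \geq \nu(x)$, so $\nu(x) = \nu(y)$ in all cases. Then from $x = y + z$ with $z \in eU$ and $\nu(z) \leq \nu(y)$ (since $\nu(z) = ez = e(x - \text{nothing})$... more carefully: $\nu(x) = \nu(y) + \nu(z) = \max(\nu(y), \nu(z))$ in the bipotent semiring $eU$, forcing $\nu(z) \leq \nu(y) = \nu(x)$), one reads off via Theorem \ref{thm3.10} that $y + z = y$ because $e z \le e y$. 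Hence $x = y$. I would present whichever of these is shortest; the second avoids any casework.

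**Anticipated obstacle.** There is essentially no obstacle here: the lemma is a formal consequence of Remark \ref{rem10.2} together with the total ordering on the bipotent semidomain $eU$. The only point requiring a moment's care is the addition rule — making sure that from $x = y + z$ with $z$ ghost and $ez \leq ey$ one genuinely gets $x = y$, which is exactly the first or third case of Theorem \ref{thm3.10} ($a + b = a$ when $ea \ge eb$, here with $a = y$, $b = z$). Everything else is bookkeeping. I would write the proof in two or three lines along the lines of: reduce to $\nu(x) = \nu(y)$ via Remark \ref{rem10.2}.i applied both ways, then show the ghost summand in $x = y + z$ is absorbed by $y$, concluding $x = y$.
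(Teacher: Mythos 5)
Your first argument is precisely the paper's proof: handle $x\in\tT(U)\cup\{0\}$ (or $y\in\tT(U)\cup\{0\}$) by Remark \ref{rem10.2}.ii, then for $x,y\in eU$ apply Remark \ref{rem10.2}.iii in both directions to get $\nu(x)\ge\nu(y)$ and $\nu(y)\ge\nu(x)$, hence $x=\nu(x)=\nu(y)=y$; this is correct as written. One caution about your ``cleaner'' alternative: from $x=y+z$ with $ez\le ey$, Theorem \ref{thm3.10} gives $y+z=y$ only when $ez<ey$ strictly, whereas $ez=ey$ yields $y+z=ey$, which need not equal $y$ when $y$ is tangible, so that route quietly reintroduces the case distinction it was meant to avoid and you should present the first version.
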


\begin{proof}

If $x \in \tT(U)$ or  $y \in \tT(U)$ this is clear by Remark
\ref{rem10.2}.ii. Assume now that both $x,y \in eU$. Then $\nu(x)
\geq \nu(y)$ and $\nu(y) \geq \nu(x)$ by Remark \ref{rem10.2}.iii;
hence $\nu(x) =  \nu (y)$, i.e., $x = y$.
\end{proof}

\begin{prop}\label{prop10.4} $ $

\begin{enumerate} \eroman
    \item  Assume that $\al:U \to V$ is a transmission. Then, for any $x,y \in U$,
    $$ x \lmodg y \ \Rightarrow \ \al(x) \lmodg \al(y).$$

    \item Assume that $\vrp :R  \to U$  and $\psi: R \to V$  are
    supervaluations with $\vrp \geq \psi$. Then for any $a,b \in R$
    $$ \vrp(a) \lmodg \vrp(b) \ \Rightarrow \ \psi(a) \lmodg \psi(b).$$
\end{enumerate}

\end{prop}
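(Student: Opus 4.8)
The plan is to prove (i) first and then deduce (ii), in both cases exploiting the structural dichotomy: if $x \lmodg y$ then either $x = y$ or $x$ is a ghost element (i.e.\ $x \in eU$). This dichotomy is immediate from Remark~\ref{rem10.2}, since $U = \bigl(\tT(U)\cup\{0\}\bigr)\cup eU$: part (ii) of that remark handles the case $x \in \tT(U)\cup\{0\}$ (it forces $x = y$), and part (iii) applies when $x \in eU = \tG(U)\cup\{0\}$.

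For (i): if $x \in \tT(U)\cup\{0\}$ then $x = y$ by Remark~\ref{rem10.2}.ii, so $\al(x) = \al(y)$ and hence $\al(x)\lmodg\al(y)$ by Remark~\ref{rem10.2}.i. The substantive case is $x \in eU$, where Remark~\ref{rem10.2}.iii gives $\nu_U(x)\ge\nu_U(y)$. Since $\al$ is a transmission, TM3 and TM4 give $\al(eU)\subseteq eV$ and $\al\circ\nu_U = \nu_V\circ\al$; in particular $\al(x)\in eV = \tG(V)\cup\{0\}$, and the restriction $\al^\nu := \al|_{eU}\colon eU\to eV$ is a semiring homomorphism (by TM1--TM5), hence order preserving for the bipotent orderings. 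Applying $\al^\nu$ to $\nu_U(x)\ge\nu_U(y)$ yields $\nu_V(\al(x)) = \al^\nu(\nu_U(x)) \ge \al^\nu(\nu_U(y)) = \nu_V(\al(y))$, and since $\al(x)\in\tG(V)\cup\{0\}$, Remark~\ref{rem10.2}.iii read in $V$ gives $\al(x)\lmodg\al(y)$.

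For (ii) I would run the same case split directly with the dominance axioms D1--D3, which avoids passing to a surjective supervaluation. If $\vrp(a)\in\tT(U)\cup\{0\}$, then $\vrp(a) = \vrp(b)$ by Remark~\ref{rem10.2}.ii, so $\psi(a) = \psi(b)$ by D1 and $\psi(a)\lmodg\psi(b)$ by Remark~\ref{rem10.2}.i. If $\vrp(a)\in eU$, then $\vrp(a) = e\vrp(a)$, so D3 gives $\psi(a) = e\psi(a)\in eV$; moreover Remark~\ref{rem10.2}.iii gives $e\vrp(b)\le e\vrp(a)$, so D2 (applied with $a$ and $b$ interchanged) gives $e\psi(b)\le e\psi(a)$, i.e.\ $\nu_V(\psi(b))\le\nu_V(\psi(a))$; since $\psi(a)\in\tG(V)\cup\{0\}$, Remark~\ref{rem10.2}.iii gives $\psi(a)\lmodg\psi(b)$. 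Alternatively, one may reduce without essential loss of generality (Remark~\ref{rem4.4}) to $\vrp$ surjective, take the transmission $\al = \al_{\psi,\vrp}$ with $\psi = \al\circ\vrp$ from Lemma~\ref{lem5.2}, and quote part (i).

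I do not expect a genuine obstacle: the statement is bookkeeping built on Remark~\ref{rem10.2} together with order preservation of the ghost part of a transmission (respectively axiom D2). The only point needing a little care is, in the alternative proof of (ii), checking that replacing $U$ by the subsemiring $\vrp(R)\cup e\vrp(R)$ does not change the relation $\vrp(a)\lmodg\vrp(b)$; this again follows from the dichotomy above, and is sidestepped entirely by the direct D1--D3 argument, which is the one I would present.
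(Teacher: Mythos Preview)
Your proof is correct and follows essentially the same approach as the paper: the same tangible/ghost dichotomy from Remark~\ref{rem10.2} for part (i), and for part (ii) you give both of the paper's arguments (the direct D1--D3 proof appears in the paper as a ``second proof'', while the reduction via surjectivity and part (i) is the paper's primary proof).
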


\begin{proof} i): Let $x \lmodg y $. If $x$ is tangible or zero,
then $x= y$; hence $\al(x) = \al(y)$. If $x$   is ghost, then
$\nu(x) \geq \nu(y)$; hence  $$\nu(\al(x)) = \al(\nu(x)) \geq
\al(\nu(y)) = \nu(\al(y))$$ by rule TM5 in \S5. Since $\al(x)$  is
ghost, this means $\al(x) \lmodg \al(y)$, cf.~Remark
\ref{rem10.2}.iii above.

ii): We may assume that the supervaluation $\vrp$  is surjective.
By \S5 we have a (unique) transmission $\al: U \to V$ with $\al
\circ \vrp = \psi$. Thus the claim follows  from part i).
\end{proof}

We cannot resist giving  a second proof of part ii) of the
proposition relying only on Definition \ref{defn5.1} of dominance
(conditions D1-D3).

\begin{proof}[Second proof of Proposition \ref{prop10.4}.ii] Assume
that $\vrp(a) \lmodg \vrp(b)$. If $\vrp(a)$   is tangible or zero,
then $\vrp(a) = \vrp(b)$;  hence $\psi(a) = \psi(b)$ by D1; hence
$\psi(a) \lmodg \psi(b)$.
 If
$\vrp(a)$ is ghost then $e\vrp(a) \geq e\vrp(b)$; hence $e\psi(a)
\geq e\psi(b)$ by D2. By D3 the element $\psi(a)$  is ghost. Thus
$\psi(a) \lmodg \psi(b)$ again,
\end{proof}


The \UTG-relation seems to be helpful for analyzing additivity
properties of supervaluations.

\begin{lemma} \label{lem10.5} If $\vrp:R \to U$ is a
supervaluation on a semiring $R$ with $\vrp(a) + \vrp(b) \in eU$,
then
  \begin{equation}\renewcommand{\theequation}{$*$}\addtocounter{equation}{-1}\label{eq:str.1}
\vrp(a) + \vrp(b) \  \lmodg  \ \vrp(a+b).
\end{equation}
\end{lemma}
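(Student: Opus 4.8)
The plan is to exploit the hypothesis $\vrp(a) + \vrp(b) \in eU$, which forces the left side of the asserted relation $\vrp(a)+\vrp(b) \lmodg \vrp(a+b)$ to be a ghost element. Once the left side is ghost, deciding whether it ghost-surpasses $\vrp(a+b)$ reduces to a mere comparison of ghost values by Remark~\ref{rem10.2}.iii, and that comparison is precisely what property SV4 of the supervaluation $\vrp$ delivers. So the whole argument is a two-line unwinding of definitions.

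Concretely, I would set $g := \vrp(a) + \vrp(b)$ and record that $g \in eU$ by hypothesis, hence $eg = g$ since $e$ is idempotent; thus $g = eg = e(\vrp(a)+\vrp(b))$. Property SV4 then reads $e\vrp(a+b) \le e(\vrp(a)+\vrp(b)) = g = eg$, that is, $\nu(\vrp(a+b)) \le \nu(g)$. Since $g$ lies in $\tG(U) \cup \{0\}$, Remark~\ref{rem10.2}.iii applies to the pair $(g, \vrp(a+b))$ and yields $g \lmodg \vrp(a+b)$, which is exactly the claim $\vrp(a)+\vrp(b) \lmodg \vrp(a+b)$.

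For a reader who prefers not to invoke Remark~\ref{rem10.2}, I would instead note the equivalent direct computation: by Theorem~\ref{thm3.10} the sum $\vrp(a+b) + g$ can equal $\vrp(a+b)$ only when $e\vrp(a+b) > eg$, a case ruled out by SV4, while in the two remaining cases it equals $g$; hence $g = \vrp(a+b) + g$ with the witness $g \in eU$, which is the ghost surpassing relation. I do not expect any genuine obstacle here; the only point worth stating carefully is the identity $e(\vrp(a)+\vrp(b)) = \vrp(a)+\vrp(b)$, valid because the left side is assumed to be ghost, since this is what converts the inequality in SV4 into the inequality required by the definition of $\lmodg$.
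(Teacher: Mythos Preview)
Your proof is correct and follows essentially the same approach as the paper: use SV4 to obtain $e\vrp(a+b)\le e(\vrp(a)+\vrp(b))$, then invoke the hypothesis $\vrp(a)+\vrp(b)\in eU$ together with Remark~\ref{rem10.2}.iii to conclude the ghost surpassing relation. Your write-up is merely more explicit (and the alternative computation via Theorem~\ref{thm3.10} is a harmless elaboration).
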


\begin{proof}
Let $v: R \to eU$ denote the \m-valuation covered by $\vrp$, $v =
e\vrp$. We have $v(a+b) \leq v(a) + v(b)$; hence   $e\vrp(a+b)
\leq e(\vrp(a) + \vrp(b))$. If $\vrp(a) + \vrp(b) \in eU$, this
shows that $\vrp(a) + \vrp(b) \lmodg \vrp(a+b)$.
\end{proof}

It will turn out to be desirable to have supervaluations on $R$ at
hand, where the property $(*)$ holds for \textbf{all} elements
$a,b$ of $R$.

\begin{defn} \label{defn10.6}
We call a supervaluation $\vrp:R \to U$  \textbf{tangibly
additive}, if in addition to the rules SV1-SV4 from \S4 the
following axiom holds:

 \begin{alignat*}{2}
& SV5: \ && \text{If } a,b \in R \ \text{and } \vrp(a) + \vrp(b)
\in \tT(U), \ \text{then } \vrp(a) + \vrp(b) = \vrp(a+b).
\end{alignat*}
\end{defn}

\begin{prop}\label{prop10.7} A supervaluation $\vrp: R \to U$ is
tangibly additive iff for any $a,b \in R$
$$ \vrp(a) + \vrp(b)  \ \lmodg \ \vrp(a+b).$$
\end{prop}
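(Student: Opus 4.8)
The plan is to prove the two implications separately. The key observation is that in a supertropical semiring every element of $U$ lies in $\tT(U)$ or in $eU$ (the zero element lying in both), so that the content of the statement beyond Lemma~\ref{lem10.5} is entirely about the case in which $\vrp(a)+\vrp(b)$ is tangible.

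For the forward implication I would assume that $\vrp$ is tangibly additive and fix $a,b\in R$; set $s:=\vrp(a)+\vrp(b)$. First, if $s\in eU$, then Lemma~\ref{lem10.5} applies verbatim and gives $s\lmodg\vrp(a+b)$. Second, if $s\in\tT(U)$, then axiom SV5 of Definition~\ref{defn10.6} yields the stronger conclusion $s=\vrp(a+b)$, and equality implies $s\lmodg\vrp(a+b)$ (one takes the ghost summand to be $0$; this is the first implication of Remark~\ref{rem10.2}.i). Since the two cases are exhaustive, $\vrp(a)+\vrp(b)\lmodg\vrp(a+b)$ holds for all $a,b\in R$.

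For the converse I would assume that $\vrp(a)+\vrp(b)\lmodg\vrp(a+b)$ holds for all $a,b\in R$ and verify axiom SV5. So let $a,b\in R$ with $\vrp(a)+\vrp(b)\in\tT(U)$. Since $\vrp(a)+\vrp(b)\in\tT(U)\cup\{0\}$, Remark~\ref{rem10.2}.ii forces the relation $\vrp(a)+\vrp(b)\lmodg\vrp(a+b)$ to be an equality, $\vrp(a)+\vrp(b)=\vrp(a+b)$. This is exactly SV5, so $\vrp$ — which already obeys SV1--SV4 as a supervaluation — is tangibly additive.

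I do not expect a real obstacle: both auxiliary facts (Lemma~\ref{lem10.5} and Remark~\ref{rem10.2}) are already in hand, and the argument is a short case distinction. The only points to watch are bookkeeping ones — that the dichotomy tangible/ghost exhausts $U$, and that one may invoke Remark~\ref{rem10.2}.ii only because the relevant element lies in $\tT(U)\cup\{0\}$, which is precisely the hypothesis of SV5, and not merely because it is non-ghost.
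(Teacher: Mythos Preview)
Your argument is correct and is exactly the paper's approach: the paper's proof reads ``This is clear by Lemma~\ref{lem10.5} and Remark~\ref{rem10.2}.ii above,'' which is precisely your case split (Lemma~\ref{lem10.5} handles the ghost case, Remark~\ref{rem10.2}.ii converts the \UTG-relation to equality in the tangible case, and Remark~\ref{rem10.2}.i covers the trivial implication from equality to $\lmodg$).
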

\begin{proof}
This is clear by Lemma \ref{lem10.5} and Remark \ref{rem10.2}.ii
above.
\end{proof}

\begin{cor}\label{cor10.8}
If $\vrp:R \to U$ is tangibly additive, then for every finite
sequence $a_1, \dots, a_m$ of elements of $R$
$$ \sum_{i=1}^m \vrp(a_i) \lmodg \vrp \bigg(\sum_{i=1}^m a_i\bigg).$$
\end{cor}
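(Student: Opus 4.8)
The plan is to prove the statement by induction on $m$, using Proposition \ref{prop10.7} (which reformulates tangible additivity as $\vrp(a) + \vrp(b) \lmodg \vrp(a+b)$ for all $a,b \in R$) together with the two elementary properties of the ghost surpassing relation recorded just before Remark \ref{rem10.2}: transitivity, and compatibility with addition, i.e.\ $x \lmodg y \Rightarrow x + z \lmodg y + z$ for every $z \in U$.

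For $m = 1$ the assertion is $\vrp(a_1) \lmodg \vrp(a_1)$, which holds since equality implies $\lmodg$ (Remark \ref{rem10.2}.i); the case $m = 2$ is exactly Proposition \ref{prop10.7}. For the inductive step, let $m \geq 2$, assume the claim for sequences of length $m-1$, and set $s := \sum_{i=1}^{m-1} a_i$. The induction hypothesis gives $\sum_{i=1}^{m-1} \vrp(a_i) \lmodg \vrp(s)$; adding $\vrp(a_m)$ to both sides and using compatibility of $\lmodg$ with addition yields
$$\sum_{i=1}^{m} \vrp(a_i) \ = \ \Big(\sum_{i=1}^{m-1} \vrp(a_i)\Big) + \vrp(a_m) \ \lmodg \ \vrp(s) + \vrp(a_m).$$
By Proposition \ref{prop10.7} applied to the pair $(s, a_m)$ we have $\vrp(s) + \vrp(a_m) \lmodg \vrp(s + a_m) = \vrp\big(\sum_{i=1}^m a_i\big)$, and chaining the two instances of $\lmodg$ by transitivity completes the induction.

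There is no substantial obstacle; the only point worth stressing is that one cannot hope to reduce the whole sum to a single application of SV5, because $\lmodg$ is merely antisymmetric, not symmetric (Lemma \ref{lem10.3}), so the ghost discrepancies genuinely accumulate on the left-hand side as $i$ ranges over $1,\dots,m$. The one-sided relation $\lmodg$ is precisely the device that absorbs this accumulation, which is why the iterated statement is natural in this language.
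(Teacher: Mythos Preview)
Your proof is correct and follows essentially the same approach as the paper, which simply says the case $m=2$ is Proposition \ref{prop10.7} and the general case is an easy induction using transitivity of the \UTG-relation. You are in fact more explicit than the paper, correctly isolating the additional ingredient of compatibility of $\lmodg$ with addition (needed to pass from the induction hypothesis to $\sum_{i=1}^{m}\vrp(a_i)\lmodg \vrp(s)+\vrp(a_m)$), which the paper leaves implicit.
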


\begin{proof}
This holds for $m=2$  by Proposition \ref{prop10.7}. The general
case follows by an easy induction using the transitivity  of the
\UTG-relation.
\end{proof}

\noindent \emph{Comment:}  We elaborate  what it means that a
given supervaluation $\vrp: R \to U$  is tangibly additive in the
case that the underlying \m-valuation $v = e\vrp: R \to eU$ is
{strong}.

Let $a,b \in R $ be given with $\vrp(a)+ \vrp(b) \in \tT(U)$,
i.e., $v(a) \neq v(b)$, and assume without loss of generality that
$v(a) < v(b)$. Then $v(a+b) = v(b)$. Hence, $\vrp(a+b)$   is some
element of the fiber $\nu^{-1}_U(v(b))$; but the axioms SV1-SV4
say little about the position of $\vrp(a+b)$ in this fiber. SV5
demands that $\vrp(a+b)$ has the ``correct'' value $\vrp(a) +
\vrp(b) = \vrp(b)$. \hfill\quad\qed

Concerning   applications the strong  \m-valuations seem to be
more important than the others. (Recall that any \m-valuation on a
ring is strong.) Thus the tangibly additive supervaluations
covering strong \m-valuations  deserve a name on their own.

\begin{defn}\label{defn10.9} We call a \textbf{supervaluation $\vrp: R \to
U$ strong} if $\vrp$ is tangibly  additive and the covered
\m-valuation $e \vrp: R \to eU$ is strong.
\end{defn}

We exhibit an important case where a tangibly additive
supervaluation is automatically strong.

\begin{prop}\label{prop10.10} Assume that $\vrp:R \to U$ is a
 tangible (cf.~Definition \ref{defn4.1}) and tangibly additive
supervaluation. Then $\vrp$ is strong.
\end{prop}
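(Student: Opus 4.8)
The plan is as follows. Since $\vrp$ is tangibly additive by hypothesis, by Definition \ref{defn10.9} it suffices to prove that the covered \m-valuation $v := e\vrp : R \to eU$ is strong, i.e., that $v(a+b) = \max(v(a),v(b))$ whenever $a,b \in R$ satisfy $v(a) \neq v(b)$. So I would fix such $a,b$ and assume, without loss of generality, that $v(a) < v(b)$.

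First I would record that $v(b) > v(a) \geq 0$ (using $0 \leq x$ for every $x$ in a bipotent semiring), so $v(b) \neq 0$; hence $\vrp(b) \neq 0$ by Remark \ref{remark3.7}. Since $\vrp$ is tangible we have $\vrp(b) \in \mathcal T(U) \cup \{0\}$, and combining this with $\vrp(b) \neq 0$ gives $\vrp(b) \in \mathcal T(U)$. Next, since $e\vrp(a) = v(a) < v(b) = e\vrp(b)$, \thmref{thm3.10} computes the sum in $U$ as $\vrp(a) + \vrp(b) = \vrp(b)$, which we have just seen is tangible. Therefore axiom SV5 applies to the pair $a,b$ and yields $\vrp(a+b) = \vrp(a) + \vrp(b) = \vrp(b)$. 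Applying the ghost map $\nu_U$ gives $v(a+b) = e\vrp(a+b) = e\vrp(b) = v(b) = \max(v(a),v(b))$, which is exactly the strongness of $v$. Hence $\vrp$ is a strong supervaluation.

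I do not expect a genuine obstacle here; the argument is a short direct computation chaining together Theorem \ref{thm3.10} (to evaluate the sum in $U$) and axiom SV5 (to transport it onto $\vrp(a+b)$). The only point needing a little care is verifying that $\vrp(b)$ is genuinely tangible, i.e., lies in $\mathcal T(U)$ rather than being the zero element, since SV5 is phrased for sums landing in $\mathcal T(U)$; this is precisely why one needs $v(b) > 0$, which is forced by $v(a) < v(b)$ together with $v(a) \geq 0$.
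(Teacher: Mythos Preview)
Your proof is correct and follows essentially the same route as the paper's own argument: reduce to showing $v=e\vrp$ is strong, take $v(a)<v(b)$, use tangibility of $\vrp$ and $v(b)\neq 0$ to see $\vrp(b)\in\mathcal T(U)$, deduce $\vrp(a)+\vrp(b)\in\mathcal T(U)$, apply SV5, and multiply by $e$. Your version is slightly more explicit in justifying $\vrp(b)\neq 0$ via Remark~\ref{remark3.7} and in invoking Theorem~\ref{thm3.10} to compute the sum, but the logic is identical.
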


\begin{proof}
We have to verify that $v:= e \vrp$  is strong. Let $a,b \in R$ be
given with $v(a) \neq v(b)$. Suppose without loss of generality
that $v(a) < v(b)$. Then $\vrp(a),\vrp(b) \in U$ and $\vrp(b) \neq
0$. Since $\vrp$ is tangible, $\vrp(b) \in \tT(U)$. It follows
that $\vrp(a) + \vrp(b) \in \tT(U)$; hence
$$ \vrp(a) + \vrp(b) = \vrp (a + b),$$
because  $\vrp$ is tangibly additive. Multiplying  by $e$ we
obtain
$$ v(a) + v(b) = v (a + b).$$
\end{proof}

We  now are  ready to aim at an application of the supervaluation
theory developed so far. We start with the polynomial semiring
$R[\lm] = R [\lm_1, \dots, \lm_n]$ in  a sequence $\lm = (\lm_1,
\dots, \lm_n)$ of $n$ variables over a semiring $R$. Let $\vrp: R
\to U$  be a tangibly additive valuation with underlying
\m-valuation $v: R \to M$, $M:= eU$.

Given a polynomial \begin{equation}\label{poly1} f = \sum_i c_i
\lm^i \in R[\lm]\end{equation} in the usual multimonomial notation
($i$ runs though the multi-indices $i = (i_1, \dots, i_n) \in
\N_0^{n}$, $\lm^i = \lm_1^{i_1} \cdots \lm_n^{i_n}$, only finitely
many $c_i \neq 0$), we obtain from $f$ polynomials
$$ \tvrp(f) \ : = \ \sum _i \vrp(c_i) \lm ^i \in U[\lm],$$
$$ \tv(f) \ : = \ \sum _i v(c_i) \lm ^i \in M[\lm],$$
by applying $\vrp$ and $v$ to  the coefficients of  $f$. This
gives us maps
$$ \tvrp :  \ R[\lm] \ \to \ U[\lm], \quad \tv :  \ R[\lm] \ \to \ M[\lm].$$

Let $a = (a_1,\dots,  a_n \in R^n)$ be an $n$-tuple of elements of
$R$. It gives us $n$-tuples
$$ \vrp(a) = (\vrp(a_1), \dots, \vrp(a_n) ), \quad v(a) = (v(a_1), \dots, v(a_n) )$$
in $U^n$ and $M^n$, respectively. We have an evaluation map
$\ep_a: R[\lm] \to R,$ which sends the polynomial $f$ (notation as
in \eqref{poly1}) to
\begin{equation}\label{poly2} \ep_a(f) \  =  f(a) \ = \  \sum _i c_i a ^i
\end{equation}
and analogous evaluation maps
$$ \ep_{\vrp(a)}(f)  : \  U[\lm]  \to U,   \quad    \ep_{v(a)}(f)  : \  M[\lm] \to M. $$
These evaluation maps are semiring homomorphisms. We have a
diagram
$$\xymatrix{
           R[\lm] %
            \ar[d]_{\tvrp}     \ar[rr]^{\ep_a}    &&      R
       \ar[d]^{\vrp}
       \\
      U[\lm]    \ar[rr]^{\ep_{\vrp(a)}}   &&        U
 }$$
(and an analogous diagram with $v$ instead of $\vrp$) which
usually does not commute. But it commutes ``nearly''.

\begin{thm}\label{thm10.11} For $f \in R[\lm]$
$$ \ep_{\vrp(a)}(\tvrp(f)) \ \lmodg \ \vrp(\ep_a(f)).$$
\end{thm}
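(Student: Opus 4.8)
The plan is to reduce the statement to the tangible additivity of $\vrp$ (Proposition~\ref{prop10.7}, equivalently Corollary~\ref{cor10.8}) by writing out both sides explicitly in the multimonomial notation. Write $f = \sum_i c_i \lm^i$ as in \eqref{poly1}, so that $\tvrp(f) = \sum_i \vrp(c_i) \lm^i$ and $\ep_{\vrp(a)}(\tvrp(f)) = \sum_i \vrp(c_i) \vrp(a)^i$, where $\vrp(a)^i := \vrp(a_1)^{i_1} \cdots \vrp(a_n)^{i_n}$. Since $\vrp$ is multiplicative (SV3), $\vrp(c_i) \vrp(a)^i = \vrp(c_i a^i) = \vrp(c_i a_1^{i_1} \cdots a_n^{i_n})$ for each multi-index $i$. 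On the other side, $\ep_a(f) = f(a) = \sum_i c_i a^i$, so $\vrp(\ep_a(f)) = \vrp\bigl(\sum_i c_i a^i\bigr)$.

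First I would collect the finitely many multi-indices $i$ with $c_i \neq 0$ into a list $i^{(1)}, \dots, i^{(m)}$ and set $b_k := c_{i^{(k)}} a^{i^{(k)}} \in R$ for $k = 1, \dots, m$. Then $\ep_{\vrp(a)}(\tvrp(f)) = \sum_{k=1}^m \vrp(b_k)$ and $\vrp(\ep_a(f)) = \vrp\bigl(\sum_{k=1}^m b_k\bigr)$ (the terms with $c_i = 0$ contribute $\vrp(0) = 0$ on both sides and may be dropped). Now the desired inequality
$$ \ep_{\vrp(a)}(\tvrp(f)) \ \lmodg \ \vrp(\ep_a(f)) $$
is exactly the statement $\sum_{k=1}^m \vrp(b_k) \lmodg \vrp\bigl(\sum_{k=1}^m b_k\bigr)$, which is Corollary~\ref{cor10.8} applied to the sequence $b_1, \dots, b_m$, valid because $\vrp$ is assumed tangibly additive. (In the edge case $m = 0$, i.e.\ $f = 0$, both sides are $0$ and the relation holds trivially since $\lmodg$ is reflexive.)

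The main thing to be careful about is that the multimonomial bookkeeping genuinely matches up — i.e.\ that $\ep_{\vrp(a)}$ applied to $\tvrp(f)$ really does produce $\sum_k \vrp(c_{i^{(k)}}) \vrp(a)^{i^{(k)}}$ with no hidden extra additive identifications coming from the semiring structure of $U[\lm]$ — but since $\ep_{\vrp(a)}$ is a semiring homomorphism and $\tvrp(f)$ is the honest polynomial $\sum_i \vrp(c_i)\lm^i$, this is automatic. The only genuine input is Corollary~\ref{cor10.8}; everything else is a formal unwinding of definitions plus the multiplicativity of $\vrp$. So there is really no hard step here: the theorem is a direct corollary of tangible additivity once the notation is transcribed, which is presumably why it is placed right after Corollary~\ref{cor10.8}.
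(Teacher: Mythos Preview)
Your proof is correct and follows essentially the same approach as the paper: both unwind the two sides using multiplicativity of $\vrp$ to reduce the claim to $\sum_i \vrp(c_i a^i) \lmodg \vrp\bigl(\sum_i c_i a^i\bigr)$, then invoke Corollary~\ref{cor10.8}. Your version is a bit more explicit about the finite indexing and the trivial edge case, but there is no substantive difference.
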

\begin{proof}
Let again $f = \sum_i c_i \lm^i$. Now $\vrp(\ep_a(f)) =
\vrp(\sum_i c_i a^i)$, while $$\ep_{\vrp(a)(\tvrp(f))} = \sum_i
\vrp(c_i) \vrp(a)^i = \sum_i \vrp(c_ia^i).$$ Thus the claim is
that
  \begin{equation}\renewcommand{\theequation}{$*$}\addtocounter{equation}{-1}\label{eq:str.1}
\sum_i \vrp(c_i a^i) \  \lmodg  \ \vrp(\sum_i c_i a^i).
\end{equation}
This follows  from Corollary \ref{cor10.8} above. \end{proof}

We draw a consequence of this theorem. Let $$Z(f) \ := \ \{ a \in
R^n \ | \ f(a) = 0\}, $$ the zero set of $f$. Let further
$$ Z_0(\tvrp(f)) \ :=  \ \{ b \in U^n \ | \ \tvrp(f)(b) \in eU \}, $$
which we call the \textbf{root set} of $\tvrp(f)$. For $a \in
Z(f)$ we have $\vrp\left( \sum_i c_i a^i \right) = 0$. It follows
by Theorem \ref{thm10.11} that $\tvrp(f)(\vrp(a)) \lmodg 0$, i.e.,
$\tvrp(f)(\vrp(a))$ is ghost.

We have proved
\begin{cor}\label{cor10.12} If $\vrp:R \to U$  is tangibly
additive, then, for any $f \in R[\lm]$,
$$ \vrp(Z(f)) \subset Z_0(\tvrp(f)).$$
 \hfill\quad\qed

\end{cor}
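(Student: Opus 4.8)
The plan is simply to unwind the definitions and quote Theorem~\ref{thm10.11}. First I would fix a polynomial $f = \sum_i c_i \lm^i \in R[\lm]$ and take an arbitrary point $a = (a_1, \dots, a_n) \in Z(f)$, so that $\ep_a(f) = f(a) = \sum_i c_i a^i = 0$ in $R$. Applying $\vrp$ and using axiom SV1 gives $\vrp(\ep_a(f)) = \vrp(0) = 0$.

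Next I would invoke Theorem~\ref{thm10.11} for this $f$ and this $a$, which yields
$$ \ep_{\vrp(a)}(\tvrp(f)) \ \lmodg \ \vrp(\ep_a(f)) = 0, $$
that is, $\tvrp(f)(\vrp(a)) \lmodg 0$. By Remark~\ref{rem10.2}.iv the relation $x \lmodg 0$ holds precisely when $x \in eU$; hence $\tvrp(f)(\vrp(a)) \in eU$. By the definition of the root set $Z_0(\tvrp(f)) = \{ b \in U^n \mid \tvrp(f)(b) \in eU\}$, this says exactly that $\vrp(a) \in Z_0(\tvrp(f))$. Since $a \in Z(f)$ was arbitrary, this gives $\vrp(Z(f)) \subset Z_0(\tvrp(f))$, as claimed.

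I do not expect any genuine obstacle here: the whole content is carried by Theorem~\ref{thm10.11} (which in turn rests on Corollary~\ref{cor10.8}, and hence on the hypothesis that $\vrp$ is tangibly additive), combined with the elementary characterization of the fibre of $\lmodg$ over $0$ in Remark~\ref{rem10.2}.iv. The only point requiring a moment's care is to note that $\ep_{\vrp(a)}$ evaluated on $\tvrp(f)$ is literally the value $\tvrp(f)(\vrp(a))$ appearing in the definition of the root set, which is immediate from the evaluation-map notation set up just before the corollary.
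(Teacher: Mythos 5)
Your proof is correct and follows exactly the route the paper takes: apply Theorem~\ref{thm10.11} at a point $a \in Z(f)$, use $\vrp(\ep_a(f)) = \vrp(0) = 0$, and conclude via the characterization of $x \lmodg 0$ (Remark~\ref{rem10.2}.iv) that $\tvrp(f)(\vrp(a))$ is ghost, i.e.\ $\vrp(a) \in Z_0(\tvrp(f))$. No gaps; this matches the paper's argument.
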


Assume now that $\vrp$ is \emph{tangible} and tangibly additive;
hence strong (cf.~Proposition \ref{prop10.10}). Then,  of course,
$\vrp(Z(f)) \subset \tT(U)_0^n$ with   $\tT(U)_0 := \tT(U) \cup \{
0 \}$. Thus we have
  \begin{equation}\renewcommand{\theequation}{$**$}\addtocounter{equation}{-1}\label{eq:str.1}
\vrp(Z(f)) \ \subset \ Z_0(\tvrp(f))_{\tan} \
\end{equation}
with
$$Z_0(\tvrp(f))_{\tan} \ : = \ Z_0(\tvrp(f)) \cap \tT(U)^n_0, $$
which we call \textbf{tangible root set} of $\tvrp(f)$. We want to
translate $(**)$ into a statement about the relation between
$Z(f)$ and the so called ``corner locus'', of the polynomial
$\tv(f) \in M[\lm]$, to be defined.

We call a polynomial $g = \sum_i d_i \lm^i \in M[\lm]$  a
\textbf{tropical polynomial}, and define the \textbf{corner-locus}
$\corn(g)$ of $g$ as the set of all $b \in M^n$ such that there
exists two different multi-indices  $j,k \in \N_0^n$ with
$$ d_j b^j = d_k b^k \geq d_i b^i$$
for all $i \neq j,k$. We also say that $\corn(g)$ is the
\textbf{tropical hypersurface} defined by the tropical polynomial
$g$.

This is well established terminology at least in the ``classical
case'' that $M$ is the bipotent semiring $T(\R)$ given by the
order monoid $(\R,+ \ )$, the so called max-plus algebra of $\R$
(cf.~\S1, \cite[\S1.5]{IMS}). A small point here is, that we admit
coordinates  with value $0_M := -\infty$, which usually is not
done in tropical geometry.  On the other hand we could work as
well with Laurent polynomials. Then of course we would have to
discard the zero element.

Returning to our tangible strong supervaluation $\vrp: R \to  U$
and the \m-valuation  $$v = e \vrp : R \to M,$$  we look at the
tropical polynomial
$$ \tv(f) = \sum_i v(c_i)\lm^i$$
from above. Let $a \in R^n$. Then
$$ \tvrp(f)(\vrp(a)) \ =  \ \sum \vrp(c_i a^i),$$
and all summands are the right side are in $\tT(U)_0$. Thus the
sum is ghost iff the maximum of the $\nu$-values
$$ \nu(\vrp(c_i a^i)) = v(c_i) v(a^i) \qquad (i \in \N^n_0)$$
is attained for at least two multi-indices. This means that $v(a)
\in \corn(\tv(f))$.

Thus $(**)$ has the following consequence

\begin{cor} \label{cor10.13}
Let $v:R \to M$ be  a strong \m-valuation   on a semiring $R$.
Assume that there exists a tangible supervaluation $\vrp:R \to U$
covering $v$. Then for any polynomial $f \in R[\lm]$,
$$ v(Z(f)) \ \subset \ \corn( \tv(f)).$$
\end{cor}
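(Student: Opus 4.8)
The plan is to deduce the inclusion directly from strongness of $v$, carrying out — intrinsically in terms of $v$ — the computation made just before the statement. Fix $f=\sum_i c_i\lm^i\in R[\lm]$ and a zero $a=(a_1,\dots,a_n)\in Z(f)$, so that $\sum_i c_ia^i=f(a)=0$ in $R$ and hence $v(\sum_i c_ia^i)=0$ by V1. Write $d_i:=v(c_i)$ and $b:=v(a)=(v(a_1),\dots,v(a_n))\in M^n$, so that $\tv(f)=\sum_i d_i\lm^i$ and, by V3, $v(c_ia^i)=d_ib^i$ for every multi-index $i$ (here $b^i:=v(a_1)^{i_1}\cdots v(a_n)^{i_n}$). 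What has to be produced is a pair of distinct multi-indices $j\neq k$ with $d_jb^j=d_kb^k\geq d_ib^i$ for all $i$, i.e. a witness that $b\in\corn(\tv(f))$.

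The crux is the familiar fact that, for a strong \m-valuation, a strictly largest summand controls the value of a finite sum: if $v(x_1)>v(x_i)$ for $i=2,\dots,m$, then iterating V4 gives $v(x_2+\cdots+x_m)\leq\max_{i\geq2}v(x_i)<v(x_1)$, so a single use of strongness ($v(x+y)=\max(v(x),v(y))$ whenever $v(x)\neq v(y)$) yields $v(x_1+\cdots+x_m)=v(x_1)$. Apply this to the finitely many nonzero summands among the $c_ia^i$, and set $\mu:=\max_i d_ib^i$. If $\mu$ were attained at a single multi-index $j$, the fact just recalled — applied to the nonzero summands — together with $v(f(a))=0$ would force $\mu=0$; but $\mu=0$ means $d_ib^i=0=\mu$ for all $i$ (as $0\leq x$ in $M$), so $\mu$ would be attained at every multi-index, contradicting uniqueness as soon as $n\geq1$. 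Hence $\mu$ is attained at two distinct multi-indices $j\neq k$, and then $d_jb^j=d_kb^k=\mu\geq d_ib^i$ for all $i$, i.e. $b\in\corn(\tv(f))$. This is the asserted inclusion $v(Z(f))\subset\corn(\tv(f))$.

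This argument uses only strongness of $v$; the hypothesized tangible cover $\vrp:R\to U$ is what underlies the cover-theoretic route of the discussion preceding the statement, where for such a $\vrp$ all summands of $\tvrp(f)(\vrp(a))=\sum_i\vrp(c_ia^i)$ lie in $\tT(U)_0$, so that this sum is ghost iff $\mu=\max_i\nu(\vrp(c_ia^i))$ is attained at least twice — i.e. iff $v(a)\in\corn(\tv(f))$ — which together with \corref{cor10.12} recovers the inclusion. The only step carrying real content is the dominant-summand reduction, where strongness of $v$ is precisely what upgrades the inequality in V4 to an equality there; everything else, including the degenerate case $\mu=0$, is bookkeeping.
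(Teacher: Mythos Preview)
Your direct argument is correct. The key step --- that a strictly unique dominant term forces $v(f(a))=\mu$ by strongness, whence $\mu=0$, which in turn contradicts uniqueness --- is sound, and the edge case $\mu=0$ is handled cleanly.

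Your route is genuinely different from the paper's. The paper derives the corollary from the supervaluation machinery built up in \S9: using a tangible \emph{and tangibly additive} $\vrp$, Corollary~\ref{cor10.12} gives $\tvrp(f)(\vrp(a))\in eU$, and then tangibility of the summands $\vrp(c_ia^i)$ translates ``ghost'' into ``maximum attained at least twice''. You bypass this entirely and work directly with $v$; as you note, the tangible cover $\vrp$ is not used at all, so your argument actually proves a stronger statement (no existence hypothesis on $\vrp$ is needed). What the paper's approach buys is the conceptual picture: Kapranov's Lemma is exhibited as a shadow of the \UTG-relation $\ep_{\vrp(a)}(\tvrp(f))\lmodg\vrp(\ep_a(f))$, which is the point of the section. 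Your approach buys elementarity and a slightly sharper conclusion. Your second paragraph accurately summarizes the paper's route, so you clearly see both.
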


We have arrived at a very general version of the Lemma of Kapranov
(\cite[Lemma 2.1.4]{EKL}), as soon as we find a tangible cover
$\vrp: R \to U$ of the given \m-valuation $v: R \to M$.  This
turns out to be easy in the case that $M$ is cancellative (i.e.,
$v$ is a strong \emph{valuation}).

\begin{lemma}\label{lem10.14} Suppose there is given a \textbf{tangible
multiplicative section} of the ghost map $\nu : U \to M$, i.e., a
map $s: M \to \tT(U)_0$ with $s(0) = 0$, $s(1) = 1$, $s(xy) =
s(x)s(y)$, and $\nu(s(x))=x$ for any $x,y \in M$. Let $v:R \to M$
be a strong \m-valuation. Then $s \circ v : R \to U$ is a tangible
strong supervaluation covering $v$.
\end{lemma}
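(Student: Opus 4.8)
The plan is to put $\vrp := s \circ v : R \to U$ and check directly that $\vrp$ satisfies the axioms SV1--SV4, that it covers $v$, that it is tangible, and finally that it is strong. The first three axioms are pure bookkeeping: since $s(0)=0$, $s(1)=1$ and $s$ is multiplicative, the rules $v(0)=0$, $v(1)=1$, $v(xy)=v(x)v(y)$ for $v$ pass immediately to $\vrp$, giving SV1--SV3. For the covering property I would compute, for $a\in R$, that $e\vrp(a) = e\,s(v(a)) = \nu_U(s(v(a))) = v(a)$, using that $s$ is a section of $\nu_U$; hence $e\vrp = v$, which at once produces SV4 from axiom V4 for $v$ and shows that $\vrp$ covers $v$ in the sense of Definition~\ref{defn4.1}. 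Tangibility is immediate, since $\vrp(R) = s(v(R)) \subseteq s(M) \subseteq \tT(U)_0$.

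The only step needing a little care is tangible additivity (axiom SV5), and this is where the hypothesis that $v$ is strong is used. Suppose $a,b\in R$ with $\vrp(a)+\vrp(b)\in\tT(U)$. Since both $\vrp(a)$ and $\vrp(b)$ lie in $\tT(U)_0$, the description of addition in \thmref{thm3.10} forces $e\vrp(a)\neq e\vrp(b)$: otherwise the sum would equal $e\vrp(a)\in eU$, which is not tangible. Relabelling if necessary, assume $v(a) = e\vrp(a) < e\vrp(b) = v(b)$; then \thmref{thm3.10} gives $\vrp(a)+\vrp(b) = \vrp(b)$. Because $v$ is strong and $v(a)<v(b)$, we get $v(a+b)=v(b)$, whence $\vrp(a+b) = s(v(a+b)) = s(v(b)) = \vrp(b) = \vrp(a)+\vrp(b)$, which is exactly SV5. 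Note that strongness is genuinely needed here: $s$ is injective (as $\nu_U\circ s = \id_M$), so if $v(a+b)$ were strictly below $v(b)$ the equality would fail.

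Having shown that $\vrp$ is a tangible and tangibly additive supervaluation, I would conclude that $\vrp$ is strong by invoking \propref{prop10.10}; alternatively one simply observes that $e\vrp = v$ is strong by hypothesis and $\vrp$ is tangibly additive, which is the definition of a strong supervaluation (Definition~\ref{defn10.9}). I do not anticipate any real obstacle: the whole argument is an unwinding of the definitions, the single subtle point being the appeal to \thmref{thm3.10} in the verification of SV5, which reduces matters to the strict inequality $v(a)<v(b)$ where strongness of $v$ can be applied.
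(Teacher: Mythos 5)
Your proof is correct and follows essentially the same route as the paper's: verify SV1--SV4 and the covering property directly from $\nu_U\circ s=\id_M$, then use strongness of $v$ in the case $v(a)<v(b)$ to obtain SV5, the only case that matters since tangibility of $\vrp(a)+\vrp(b)$ forces $v(a)\neq v(b)$. Your write-up merely spells out details (tangibility, the appeal to Theorem~\ref{thm3.10}, the final invocation of Definition~\ref{defn10.9} or Proposition~\ref{prop10.10}) that the paper leaves implicit.
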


\begin{proof}
Clearly  $\vrp = sv$ obeys SV1-SV4. Let  $a,b \in R$ be given with
$v(a) <
 v(b)$.  Then $v(a+b) = v(b)$; hence $s v (a+b) = sv(b) $. Thus
 SV5 holds true. We have $e \vrp = \nu \circ \vrp = v$.
\end{proof}

\begin{example}
 If $U$ is a supertropical semifield, it is known that such a
section $s$ always exists (\cite[Proposition
1.6]{IzhakianRowen2009Equations}).
\end{example}

\begin{example}\label{examp10.5}
Assume that $M$ is a cancellative bipotent semiring, and $v: R \to
M$ is a strong valuation. We take $U:= D(M \setminus \{ 0\})$
(Example \ref{examps3.16}), for which we write more briefly
$D(M)$. For every $z\in M$  there exists a unique $x \in \tT(U)_0$
with $\nu(x) = z$. We write $x = \hat z$. Clearly $z \mapsto \hat
z $ is a tangible multiplicative section of the ghost map, in fact
the only one.  By the lemma we obtain a tangible supervaluation
$$ \hat v : R \to U, \quad \hat v (z) := \widehat{v(z)},$$
which covers $v$, in fact the only such supervaluation.
\end{example}

Looking again at Corollary \ref{cor10.13} we now know that
$$ v(Z(f)) \subset \corn(\tv(f)),$$
whenever $v: R \to M$ is a strong valuation and $f\in R[\lm]$.

\section{The tangible strong supervaluations in $\Cov(v)$} \label{sec:11}

Given an \m-valuation $v: R \to M$, recall from \S7 that the
equivalence classes  $[\vrp]$ of supervaluations $\vrp$  covering
$v$ form a complete lattice $\Cov(v)$. Abusing notation, we
usually will not distinguish  between a supervaluation $\vrp$  and
its class $[\vrp]$, thus writing $\vrp \in \Cov(v)$ if $\vrp$
covers $v$. This will cause no harm in the present section. \{N.B
If you are sceptical   about this,  you may always assume that
$\vrp$  is surjective, more specially, that $\vrp = \vrp_v / E$
with $\vrp_v$ the initial covering of $v$ and $E$ an
\MFCE-relation on $U(v)$ (cf.~Notation \ref{not6.14}). These
supervaluations $\vrp$ are canonical representatives  of their
classes $[\vrp]$.\}

\begin{lemma}\label{lem11.1}
Assume that $\vrp:R \to U$ and $\psi:R \to V $ are supervaluation
with $\vrp \geq \psi.$
\begin{enumerate}\eroman
    \item  If $\psi$ is tangible, then $\vrp$  is tangible.
    \item  It $\vrp$ is tangibly additive, then $\psi$   is
    tangibly additive.
\end{enumerate}
\end{lemma}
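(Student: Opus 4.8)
The two assertions call for rather different tools: (i) comes straight out of the dominance axioms D1--D3 of Definition \ref{defn5.1}, whereas (ii) is cleanest when phrased through the ghost surpassing relation together with the partial additivity of transmissions. Throughout I work in the situation of the present section, where $\vrp$ and $\psi$ cover a common m-valuation $v\colon R\to M$, so that $e\vrp(a)=e\psi(a)=v(a)$ for all $a\in R$; this is what keeps the argument for (i) alive, since the ghost part of the relevant transmission is then $\id_M$.

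For (i) I would argue contrapositively. If $\vrp$ is not tangible, pick $a\in R$ with $\vrp(a)\in\tG(U)$, i.e.\ $\vrp(a)=e\vrp(a)\neq 0$. Axiom D3 yields $\psi(a)\in eV$, and since $e\psi(a)=e\vrp(a)=\vrp(a)\neq 0$, Remark \ref{remark3.7} forces $\psi(a)\neq 0$. Hence $\psi(a)\in eV\setminus\{0\}=\tG(V)$, so $\psi$ is not tangible; this is exactly the contrapositive of (i). One may equally run this through the transmission $\al=\al_{\psi,\vrp}$: it restricts to $\id_M$ on $M=eU$, so it cannot send the nonzero ghost $\vrp(a)$ to $0$.

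For (ii) I would use Proposition \ref{prop10.7}: a supervaluation $\chi$ is tangibly additive iff $\chi(a)+\chi(b)\lmodg\chi(a+b)$ for all $a,b$. So, given $\vrp$ tangibly additive, the goal is to show $\psi(a)+\psi(b)\lmodg\psi(a+b)$ for arbitrary $a,b\in R$. First reduce to $\vrp$ surjective by passing from $U$ to the subsemiring $\vrp(R)\cup e\vrp(R)$ (which affects neither hypothesis nor conclusion), so that there is a transmission $\al\colon U\to V$ with $\psi=\al\circ\vrp$. Now distinguish two cases for $s:=\psi(a)+\psi(b)$. If $s\in eV$ (in particular if $s=0$), then Lemma \ref{lem10.5} applied to $\psi$ gives $s\lmodg\psi(a+b)$ directly, and the hypothesis on $\vrp$ is not even needed. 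If $s\in\tT(V)$, set $x:=\vrp(a)$, $y:=\vrp(b)$; then $\al(x)+\al(y)=s$ is tangible, so Proposition \ref{prop5.10}(ii) gives $\al(x+y)=\al(x)+\al(y)=s$. On the other hand, $\vrp$ tangibly additive gives $x+y=\vrp(a)+\vrp(b)\lmodg\vrp(a+b)$ by Proposition \ref{prop10.7}, and applying the transmission $\al$ preserves $\lmodg$ by Proposition \ref{prop10.4}(i), so $s=\al(x+y)\lmodg\al(\vrp(a+b))=\psi(a+b)$. In either case $\psi(a)+\psi(b)\lmodg\psi(a+b)$, whence $\psi$ is tangibly additive by Proposition \ref{prop10.7}.

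The step I expect to require the most care is the tangible case of (ii): it rests entirely on Proposition \ref{prop5.10}(ii), the only fragment of additivity a transmission is guaranteed to enjoy, and I would want to check cleanly that ``$\al(x)+\al(y)$ tangible'' really does put us in its hypotheses. For (i) the point to watch is simply that $e\vrp=e\psi$; this is automatic for $\vrp,\psi\in\Cov(v)$ and is exactly what prevents the transmission from collapsing a ghost value of $\vrp$ to zero.
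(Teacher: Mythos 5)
Your proof is correct. Part (i) is the paper's own route (axiom D3 of Definition \ref{defn5.1}); the paper's proof is just the phrase ``clear from D3'', and your explicit use of $e\vrp=e\psi$ to rule out $\psi(a)=0$ is precisely the tacit point that makes it work: D3 alone only yields $\psi(a)\in eV$, and it is the common covered valuation of the present section (equivalently, that the transmission restricts to $\id_M$ on the ghost ideal) that prevents a nonzero ghost value of $\vrp$ from being collapsed to $0$; this is exactly the setting in which the lemma is applied. For part (ii) the paper's proof is the bare citation of Propositions \ref{prop10.7} and \ref{prop10.4}.ii, which unwinds into an argument purely at the level of dominance, with no transmission: if $\psi(a)+\psi(b)$ is tangible, say $e\psi(a)<e\psi(b)$, then D2 and the total order on $eU$ force $e\vrp(a)<e\vrp(b)$, so $\vrp(b)=\vrp(a)+\vrp(b)\lmodg\vrp(a+b)$ by Proposition \ref{prop10.7}, hence $\psi(b)\lmodg\psi(a+b)$ by Proposition \ref{prop10.4}.ii, and Remark \ref{rem10.2}.ii (with $\psi(b)$ tangible) turns this into $\psi(a)+\psi(b)=\psi(b)=\psi(a+b)$, which is SV5 for $\psi$. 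You instead route the argument through the transmission $\al$ (after the harmless corestriction making $\vrp$ surjective, via Lemma \ref{lem5.2}) and combine Proposition \ref{prop5.10}.ii with Proposition \ref{prop10.4}.i; this is equally valid, and your care in checking that ``$\al(x)+\al(y)$ tangible'' is literally the hypothesis of Proposition \ref{prop5.10}.ii is well placed, since that is the only fragment of additivity a transmission enjoys. The paper's route buys freedom from the surjectivity reduction and from transmissions altogether; yours makes visible exactly where partial additivity of transmissions enters. Your ghost case via Lemma \ref{lem10.5} is what lets you conclude through the full equivalence of Proposition \ref{prop10.7}; alternatively you could finish the tangible case with Remark \ref{rem10.2}.ii and verify SV5 directly, making that case dispensable.
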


\begin{proof}
i): is clear from the axiom D3 in the definition of dominance
(cf.~Definition \ref{defn5.1}).

ii): follows from Propositions \ref{prop10.7} and
\ref{prop10.4}.ii.
\end{proof}

\emph{Starting from now we assume that $v$ is a strong valuation}
(which means in particular that $M$ is cancellative). Let $\gq$
denotes the support of $v$, i.e., $\gq = v^{-1}(0)$.

\begin{notation}\label{not11.2} $\Cov_{\tng}(v)$ denotes the set of
tangible supervaluations in $\Cov(v)$, and $\Cov_{\stg}(v)$
denotes the set of strong ($=$ tangibly additive) supervaluations
in $\Cov(v)$. Finally, let $$ \stCov(v) \ := \ \tCov(v ) \cap
\sCov(v),
$$
be the set of tangible strong supervaluations covering $v$.
\end{notation}

We already know by Example \ref{examp10.5} that the set
$\stCov(v)$ is not empty. Lemma \ref{lem11.1} tells us in
particular that $\tCov(v)$ is an upper set and $\sCov(v)$ is a
lower set in the poset $\Cov(v)$.

Let us study these sets more closely. We start with $\tCov(v)$.
The initial supervaluation $\vrp_v: R \to U(v)$ (cf.~Definition
\ref{defn5.15}) is the top ($=$ biggest) element of $\Cov(v)$, and
thus is also the top element of $\tCov(v)$.  This can also be read
off from the explicit description of $\vrp_v$ in Example
\ref{examp4.5}. The other elements of $\Cov(v)$  are the
supervaluations $\vrp_v / E : R \to U(v) / E$, with $E$ running
through the \MFCE-relations on $U(v)$. We have to find out which
\MFCE-relations $E$ on $U(v)$ give tangible supervaluations
$\vrp_v / E$.

Here is a definition which - for later use - is slightly more
general than what we need now:

\begin{defn}\label{defn11.3} We call an equivalence relation $E$
on a supertropical semiring $U$ \textbf{ghost separating} if for
all $x \in \tT (U)$, $y \in U$,
$$ x \sim_E y \quad \Rightarrow \quad y \in \tT(U) \ \text{or} \ x \sim_E 0.$$
\end{defn}

If $E$ is an \MFCE-relation on $U$, then $x \sim_E 0$ only if
$x=0$. Thus, $E$ is ghost separating iff $\tT(U)$  is a union of
$E$-equivalence classes. This means that $E$ is finer than the
\MFCE-relation $\tE$ introduced in Examples \ref{examps6.4}.v,
whose equivalence classes are the tangible fibers of $\nu_U$ and
the one-point sets in $eU$.

If $\vrp:R \to U$ is a surjective tangible supervaluation and $E$
is an \MFCE-relation on $U$, then it is obvious that $\vrp / E : R
\to U / E$ is again tangible iff $E$ is ghost separating. Thus we
see that $\vrp_v / \tE$ is the bottom ($=$ smallest) element of
$\tCov(v)$.

Now recall from Example \ref{examp6.12} that, in the notation at
the end of \S9 (Example \ref{examp10.5}),
$$ U(v) / \tE \ = \ D(M) ;$$
hence $\vrp_v / \tE$ coincides with the only tangible cover $\hat
v$ of $v$ with values in $D(M)$, cf.~Example~\ref{examp10.5}. We
conclude  that
$$ \tCov (\vrp) = \{ \psi \in \Cov(v) \ | \ \psi \geq \hat v\}.$$
Again by Example~\ref{examp10.5} we know that $\hat v$ is strong.
This $\hat v$ is also the bottom of the poset~$\stCov(\vrp)$.

We turn to $\sCov(v)$. We will construct a new  element of this
poset in a direct way. For that reason we introduce an equivalence
relation on $R$.

\begin{defn}\label{defn11.4} Let $S(v)$ denote the  equivalence
relation on the set $R$ defined as follows. \{We write $\sim_v$
for $\sim_{S(v)}$.\}

If $a_1,a_2 \in R$ then
$$\begin{array}{lll}
   a_1 \sim_v a_2  & \Longleftrightarrow & \text{either } v(a_1) = v(a_2) = 0 \\
     &  &  \text{or } \exists c_1, c_2 \in R, \ \text{with } v(c_1) < v(a_1), \  v(c_2) < v(a_2),\\
     & & a_1 + c_1 = a_2 + c_2.
  \end{array}$$
\end{defn}
It is easily checked that $S(v)$ is indeed an equivalence relation
on the set $R$, by making strong use if the assumption that the
valuation $v$ is strong. This is the finest equivalence relation
$E$ on $U$ such that $a \sim_E a +c $  if $v(c) < v(a)$. Observe
also that
$$ a_1 \vsim  a_2 \ \Longrightarrow \ v(a_1) = v(a_2).$$

We claim that  $S(v)$ is compatible with  multiplication, i.e.,
$$ a_1 \vsim  a_2 \ \Longrightarrow \ a_1b \vsim  a_2b$$
for every $b \in R$. This is obvious if $a_1 \in \gq$  or $a_2 \in
\gq$, or $b \in \gq$. Otherwise $v(b)> 0$, and we have elements
$c_1,c_2 \in R$ with $ v(c_1) < v(a_1)$, $v(c_2) < v(a_2)$, $a_1 +
c_1 = a_2 + c_2$. Then $a_1b + c_1b = a_2b + c_2b$ and
$$v(c_i b ) \ = \  v(c_i)v(b) \ < \ v(a_i)v(b) \ = \ v(a_ib)$$
for $i = 1,2$, since by assumption $M$ is cancellative. Thus
indeed $a_1 b \vsim a_2b$.

We denote the $S(v)$-equivalence class of an element $a$ of $R$ by
$[a]_v$. The set  $\bR := R / S(v)$ is a monoid under the well
defined multiplication $$ [a]_v \cdot [b]_v \ = \ [ab]_v$$ for
$a,b \in R$. The subset $R \setminus \gq$ of $R$ is a union of
$S(v)$-equivalence classes and the subset $\overline{R \setminus
\gq} : = (\Rmq) / S(v)$ of $\bR$ is a submonoid of $R$. We have
$$\bR = \overline{R \setminus \gq} \  \cup \  \{ \bar 0 \}$$ with
$\bar 0 = [0]_v = \gq.$

Since $a_1 \vsim a_2$ implies $v(a_1) = v(a_2)$, we have a well
defined monoid homomorphism $\bR \to M$, $[a]_v \mapsto v(a)$,
which restricts to a  monoid homomorphism $$ \bv: \overline{R
\setminus \gq} \ \to \ M \setminus \{ 0\}.$$ This map $\bv$  gives
us a supertropical semiring
$$ U := \STR(\overline{R
\setminus \gq}, M \setminus \{ 0\}, \bv),$$ cf. Construction
\ref{constr3.14}. Notice that $\tT(U) = \overline{R \setminus
\gq}$ and $eU = M$. We identify $\tT(U)_0 = \bR$.

\begin{prop}\label{prop11.5} The map $\chi: R \to U$ given by $$\chi(a) :=
0 \quad \text{if}\quad a \in \gq, \qquad \chi(a) := [a]_v \in
\tT(U) = \bRmq \quad\text{if} \quad a \notin \gq,$$ is a tangible
strong supervaluation covering $v$.

\end{prop}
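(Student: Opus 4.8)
The plan is to verify directly that $\chi$ satisfies the axioms SV1--SV5, using the construction of $U$ and the properties of the relation $S(v)$ established just before the statement. First I would check SV1--SV3: SV1 and SV2 are immediate from the definition ($0 \in \gq$, and $1 \notin \gq$ with $[1]_v$ the unit of $\bRmq$), and SV3 splits into cases according to whether $a$ or $b$ lies in $\gq$. If either does, both sides are $0$ (since $\gq$ is an ideal of $R$, hence $ab \in \gq$, so $\chi(ab) = 0 = \chi(a)\chi(b)$). If neither does, then $ab \notin \gq$ because $M$ has no zero divisors (here $v$ is a valuation, so $M$ is a semidomain), and $\chi(ab) = [ab]_v = [a]_v[b]_v = \chi(a)\chi(b)$ by the well-definedness of multiplication on $\bR$ — which is exactly the compatibility of $S(v)$ with multiplication proved right above the proposition.

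Next I would handle the covering claim $e\chi = v$. For $a \in \gq$ we get $e\chi(a) = e \cdot 0 = 0 = v(a)$; for $a \notin \gq$ we get $e\chi(a) = e \cdot [a]_v = \bv([a]_v) = v(a)$ by the construction of $U = \STR(\bRmq, M\setminus\{0\}, \bv)$, where $e \cdot x = \bv(x)$ for tangible $x$. This simultaneously gives SV4, since $v$ is an $\m$-valuation; in fact, because $v$ is assumed strong, we even have the equality $v(a+b) = \max(v(a),v(b))$ whenever $v(a) \neq v(b)$. By construction $\tT(U) = \bRmq$ and $\chi(R) \subset \tT(U) \cup \{0\}$, so $\chi$ is tangible, and the covered $\m$-valuation is the strong valuation $v$; so once SV5 is checked, $\chi$ will be strong by Definition \ref{defn10.9}.

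The main work — and the step I expect to be the real obstacle — is SV5: if $\chi(a) + \chi(b) \in \tT(U)$, then $\chi(a) + \chi(b) = \chi(a+b)$. The hypothesis $\chi(a)+\chi(b)$ tangible forces (by the addition rule in $U$, cf.\ Theorem \ref{thm3.10} and Construction \ref{constr3.14}) that $v(a) \neq v(b)$, hence in particular neither $a$ nor $b$ is forced to be in $\gq$ simultaneously with the other; say $v(a) < v(b)$, so $b \notin \gq$ and $\chi(a) + \chi(b) = \chi(b) = [b]_v$. Since $v$ is strong, $v(a+b) = v(b) \neq 0$, so $a + b \notin \gq$ and $\chi(a+b) = [a+b]_v$. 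Thus SV5 reduces to the identity $[b]_v = [a+b]_v$, i.e.\ $b \vsim a+b$. But this is precisely the defining closure property of $S(v)$: I would take $c_1 = a$ in the definition, noting $v(a) < v(b)$, with $b + 0 = (a+b) + (-a)$... — more carefully, since $R$ need not be a ring, I instead observe that $S(v)$ was by construction the finest equivalence relation with $a' \sim a' + c$ whenever $v(c) < v(a')$; applying this with $a' = a+b$ and $c = \dots$ requires writing $a+b$ as $b$ plus something of strictly smaller value, which is exactly the case $v(a) < v(b)$ read off from $a+b = b + a$. Hence $a+b \vsim b$, giving $\chi(a+b) = \chi(b) = \chi(a)+\chi(b)$, and the case $v(b) < v(a)$ is symmetric. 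This completes the verification, and $\chi$ is a tangible strong supervaluation covering $v$. \endbox
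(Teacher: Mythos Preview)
Your proof is correct and follows essentially the same route as the paper's own argument: verify SV1--SV3 directly, observe $e\chi = v$ from the construction of $U$ to get SV4 and the covering claim, note tangibility, and reduce SV5 to the statement $a+b \vsim b$ when $v(a) < v(b)$, which follows from the defining property of $S(v)$. The paper's proof is simply a terser version of yours; your only minor wobble is the momentary detour trying $a' = a+b$ before landing on the correct instantiation $a' = b$, $c = a$ (so that $b \sim_v b+a = a+b$), but you recover and the argument is sound.
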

\begin{proof} It is obvious that $\chi$ obeys the rules SV1-SV3 in
the definition of supervaluations (Definition \ref{defn4.1}). Due
to our  construction of $U$ we have $\nu_U \circ  \chi = v$. Thus
$\chi$ also obeys SV4, and hence is a supervaluation covering the
strong valuation $v$. It is clearly tangible.

It remains to verify that  $\chi$ is tangibly additive. Let $a,b
\in R$ be given with $\chi(a) + \chi(b) \in \tT(U)$, i.e., $v(a)
\neq v(b)$. Assume without loss of generality that $v(a) < v(b)$.
Then $a + b \vsim b$. This means that $\chi(a+b) = \chi(b)$, as
desired.
\end{proof}

We strive for an understanding of the set of all $\psi \in \Cov
(v)$ which are dominated by this supervaluation $\chi$. We need a
new definition.

\begin{defn}\label{defn11.6}
We call a supervaluation $\vrp: R \to V$ \textbf{very strong}, if
 \begin{alignat*}{2}
&SV5^*:\ && \forall a,b \in R : e\vrp(a)< e \vrp(b) \
\Longrightarrow \ \vrp(a+b) = \vrp(b).
\end{alignat*}
\end{defn}
 Clearly SV$5^*$ implies that the \m-valuation $v$ is strong. If we
 require this property only for $a,b \in R$ with $e\vrp(a) < e \vrp(b)$
and  $\vrp(b)$ tangible, we are back to condition SV5  given above
(Definition \ref{defn10.6}). Thus, a very strong supervaluation is
certainly strong.  On  the other hand, every \textbf{tangible}
strong supervaluation is very strong.

\begin{lem}\label{lem11.7}
If $\vrp: R \to V$ is very strong, then any supervaluation $\psi:
R \to W$ dominated by $\vrp$ is again very strong.
\end{lem}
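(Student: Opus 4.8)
The plan is to verify axiom SV$5^*$ for $\psi$ directly, using only the dominance conditions D1--D3 from Definition~\ref{defn5.1}. So let $a,b \in R$ be given with $e\psi(a) < e\psi(b)$; we must show $\psi(a+b) = \psi(b)$.

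First I would transfer this strict inequality from $\psi$ back to $\vrp$. Applying condition D2 to the pair $(b,a)$ in place of $(a,b)$ gives the implication $e\vrp(b) \leq e\vrp(a) \Rightarrow e\psi(b) \leq e\psi(a)$. Since the ghost ideal $eV$ is a bipotent semidomain, hence totally ordered (Proposition~\ref{prop1.2}), the hypothesis $e\psi(a) < e\psi(b)$ is exactly the negation of $e\psi(b) \leq e\psi(a)$; so the contrapositive of the above implication yields the negation of $e\vrp(b) \leq e\vrp(a)$, that is, $e\vrp(a) < e\vrp(b)$.

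Now, since $\vrp$ is very strong, axiom SV$5^*$ applied to $a,b$ gives $\vrp(a+b) = \vrp(b)$. Finally I would invoke condition D1 for the two elements $a+b$ and $b$ of $R$: from $\vrp(a+b) = \vrp(b)$ it follows that $\psi(a+b) = \psi(b)$, which is precisely what SV$5^*$ for $\psi$ demands. Hence $\psi$ is very strong (it is a supervaluation by hypothesis, and the covered \m-valuation is automatically strong, as noted after Definition~\ref{defn11.6}).

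There is essentially no obstacle here; the only point needing a word of care is the appeal to totality of the order on $eV$ in order to pass from D2 to its contrapositive in the second step. If one prefers not to use D1, one may instead reduce without loss of generality to the case where $\vrp$ is surjective (replacing $V$ by $\vrp(R)\cup e\vrp(R)$), so that \lemref{lem5.2} supplies a transmission $\alpha\colon V \to W$ with $\psi = \alpha\circ\vrp$, and then $\psi(a+b) = \alpha(\vrp(a+b)) = \alpha(\vrp(b)) = \psi(b)$; either route completes the argument.
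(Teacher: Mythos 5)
Your proof is correct and follows essentially the same route as the paper: transfer the strict inequality $e\psi(a)<e\psi(b)$ back to $e\vrp(a)<e\vrp(b)$ via the contrapositive of D2 (using totality of the ordering on the ghost ideal), apply SV$5^*$ for $\vrp$, and conclude with D1. The paper's own proof is exactly this three-step argument, so no further comparison is needed.
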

\begin{proof} Let $a,b \in R $ be given with $ e\psi(a) < e \psi
(b)$. It follows from axiom D2 that $ e\vrp(a) < e \vrp (b)$,
since $ e\vrp(a) \geq  e \vrp (b)$ would imply $ e\psi(a) \geq  e
\psi (b)$. Thus $\vrp(a+b) = \vrp(b)$, and  we obtain by D1 that
$\psi(a+b) = \psi(b)$.
\end{proof}

Returning to our given strong valuation $v: R \to M$, let
$\sCov^*(v)$ denote the subset of all $\vrp \in \Cov(v)$ which are
very strong. Lemma \ref{lem11.7} tells us in particular that
$\sCov^*(v)$  is a lower set in the poset $\Cov(v)$, and hence in
$\sCov(v)$. We have
$$\tCov(v) \cap \sCov^*(v)\ = \ \tCov(v) \cap \sCov(v) \ = \ \tsCov(v). $$
\begin{thm}\label{thm11.8}
The tangible strong supervaluation  $\chi : R \to U$  from above
(Proposition \ref{prop11.5}) dominates every very strong
supervaluation covering $v$, and hence is the top element of both
$\sCov^*(v)$ and $\tsCov(v)$.
\end{thm}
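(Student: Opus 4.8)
The plan is to show that $\chi$ dominates an arbitrary very strong supervaluation $\psi\colon R\to W$ covering $v$, by verifying the three dominance axioms D1--D3 of Definition~\ref{defn5.1} directly. Axiom D2 is immediate since $e\chi = e\psi = v$, so $e\chi(a)\le e\chi(b)$ means exactly $v(a)\le v(b)$, which is the same as $e\psi(a)\le e\psi(b)$. Axiom D3 is also essentially free: if $\chi(a)\in eU$ then, by construction of $\chi$ in Proposition~\ref{prop11.5}, we have $a\in\gq$, so $v(a)=0$, hence $\psi(a)\in eW$ as well (in fact $\psi(a) = e\psi(a) = v(a) = 0$). So the whole content is axiom D1: if $\chi(a) = \chi(b)$ then $\psi(a) = \psi(b)$.

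For D1, suppose $\chi(a) = \chi(b)$. If this common value is $0$, then $a,b\in\gq$, so $\psi(a) = \psi(b) = 0$ and we are done. Otherwise $a,b\notin\gq$ and $[a]_v = [b]_v$ in $\bR = R/S(v)$, i.e.\ $a\vsim b$. By definition of $S(v)$ (Definition~\ref{defn11.4}), there exist $c_1,c_2\in R$ with $v(c_1) < v(a)$, $v(c_2) < v(b)$, and $a + c_1 = b + c_2$. Now I apply $\psi$. Since $e\psi = v$, the hypothesis $v(c_1) < v(a)$ reads $e\psi(c_1) < e\psi(a)$, so by SV$5^*$ (very strongness of $\psi$, Definition~\ref{defn11.6}) we get $\psi(a + c_1) = \psi(a)$; similarly $\psi(b + c_2) = \psi(b)$. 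But $a + c_1 = b + c_2$ in $R$, so $\psi(a) = \psi(a+c_1) = \psi(b+c_2) = \psi(b)$. This establishes D1, hence $\chi \geq \psi$.

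Since $\chi$ is itself tangible and strong (Proposition~\ref{prop11.5}), it lies in $\tsCov(v)\subset\sCov^*(v)$, and we have just shown it dominates every element of $\sCov^*(v)$; therefore it is the top element of $\sCov^*(v)$, and a fortiori of the sub-poset $\tsCov(v) = \tCov(v)\cap\sCov^*(v)$. I do not expect a genuine obstacle here: the argument is a direct unwinding of the definition of $S(v)$ against the defining axiom SV$5^*$ of very strongness. The one point deserving a word of care is the well-definedness of $\chi$ as a map on $\bR$ and the verification that $\chi(a) = \chi(b)$ really forces $a\vsim b$ (when both are outside $\gq$) rather than merely $v(a) = v(b)$ — but this is exactly how $U$ and $\chi$ were built, with $\tT(U) = \bRmq$, so it is automatic.
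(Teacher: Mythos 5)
Your proposal is correct and follows essentially the same route as the paper: verify D1--D3, with D2 and D3 immediate (the latter from tangibility of $\chi$), and settle D1 by unwinding $a\vsim b$ into $a+c_1=b+c_2$ with $v(c_i)<v(a_i)$ and applying SV$5^*$ to get $\psi(a)=\psi(a+c_1)=\psi(b+c_2)=\psi(b)$. The concluding observation that $\chi\in\tsCov(v)\subset\sCov^*(v)$ makes it the top element is likewise the paper's conclusion.
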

\begin{proof} Let  $\psi: R \to V$ be a very strong
supervaluation covering $v$ (in particular $eV = M$). We verify
 axioms  D1-D3 for the pair $\chi$, $\psi$, and then will be
done. D2 is obvious, and  D3 holds trivially since $\chi$ is
tangible. Concerning D1, assume that $\chi(a_1) = \chi(a_2)$. By
definition of $\chi$ this means that $a_1 \vsim a_2$.

We have to prove that $\psi(a_1) = \psi(a_2)$. Either $a_1, a_2
\in \gq$, or there exist $c_1, c_2 \in R$ with $v(c_1) < v(a_1)$,
$v(c_2) < v(a_2)$, $c_1 + a_1 = c_2 + a_2$. In the first case $e
\psi (a_1) = e \psi (a_2) = 0 $ hence  $\psi(a_1) = \psi(a_2) =
0$. In the second case we have
$$\psi(a_1)  = \psi (a_1 + c_1 ) = \psi(a_2 + c_2) = \psi (a_2)$$
since $\psi$ is very strong. Thus $\psi(a_1) = \psi(a_2)$ in both
cases.
\end{proof}

\begin{notation}\label{notation11.9} We denote the semiring $U$
given above by $\bUv$ and the supervaluation $\chi$  given above
by $\bvrp_v$. We call $$ \bvrp_v : R \ \to \ \bUv = \STR(\bRmq, M
\setminus \{ 0 \}, \bv)
$$
 the \textbf{initial very strong supervaluation} covering $v$.
\end{notation}

In this notation
$$ \begin{array}{lll}
   \sCov^*(v) & =  & \{  \psi \in \Cov (v) \ | \ \bvrp_v \geq \psi \},
   \\[2mm]
     \tsCov(v) & =  & \{  \psi \in \Cov (v) \ | \ \bvrp_v \geq \psi \geq \hat v \}.  \\
   \end{array}
$$

Let $E(v)$ denote the equivalence relation on $U(v)$ whose
equivalence classes  are the sets  $[a]_v$  with  $a \in R
\setminus \gq = \tT(U(v))$ and the one point sets $\{ x \}$ with
$x\in M$. In other terms, the restriction $E(v) | \tT(U)$
coincides with $S(v) | \Rmq$, while $E(v) | M$ is the diagonal
$\diag(M)$ of $M$. We identify
$$ U(v) / E(v) = \bUv$$
in the obvious way.

\begin{prop}\label{prop11.10} $E(v) $ is a ghost separating
\MFCE-relation and
$$ \bvrp_v = \vrp_v / E(v).$$
\end{prop}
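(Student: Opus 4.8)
The plan is to verify the three properties of $E(v)$ directly from the definitions, using what has already been established for the relation $S(v)$ on $R$, and then to identify the quotient $U(v)/E(v)$ with $\bUv$ by means of the structure theorem for supertropical predomains. First I would note that $E(v)$ is well defined: if $a \notin \gq$, then $a_1 \vsim a$ forces $v(a_1) = v(a)$, so the whole class $[a]_v$ is contained in $\Rmq = \tT(U(v))$; together with the one-point sets $\{x\}$, $x \in M = eU(v)$, these sets partition $U(v) = \Rmq \,\dot\cup\, M$. Ghost separation is then immediate from Definition \ref{defn11.3}: if $x \in \tT(U(v))$, $y \in U(v)$ and $x \sim_{E(v)} y$ with $x \ne y$, then $y$ lies in the same $E(v)$-class as $x$, namely some $[a]_v \subseteq \Rmq$, so $y \in \tT(U(v))$.

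Next, for the MFCE properties: fiber conservation follows from the observation $a_1 \vsim a_2 \Rightarrow v(a_1) = v(a_2)$ recorded after Definition \ref{defn11.4}; indeed, if $x_1 \sim_{E(v)} x_2$ and $x_1 \ne x_2$, then $x_1, x_2 \in \Rmq$ and $x_1 \vsim x_2$, so $v(x_1) = v(x_2)$, whence $ex_1 = v(x_1) = v(x_2) = ex_2$ using $ex = v(x)$ for $x \in \Rmq$ (Example \ref{examp4.5}). For multiplicativity, let $x_1 \sim_{E(v)} x_2$ and, without loss of generality, $x_1 \ne x_2$, so $x_1, x_2 \in \Rmq$ and $x_1 \vsim x_2$ in $R$; take $y \in U(v)$ and distinguish three cases. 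If $y = 0$, then $x_1 y = 0 = x_2 y$. If $y \in \Rmq$, then, since $\gq$ is prime (as $M$ is cancellative, cf. Proposition \ref{prop2.4}), $\Rmq$ is closed under the multiplication of $R$, which is the one induced on $U(v)$ (Example \ref{examp4.5}); so $x_i y$ is the product taken in $R$, and $x_1 y \vsim x_2 y$ because $S(v)$ is compatible with multiplication on $R$, a fact checked just before Proposition \ref{prop11.5}; hence $x_1 y \sim_{E(v)} x_2 y$. If $y \in M \setminus \{0\}$, then $x_i y = v(x_i)\, y$ by the multiplication rule in the construction of $U(v)$, and $v(x_1) = v(x_2)$ yields $x_1 y = x_2 y$. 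Thus $E(v)$ is an MFCE-relation.

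Finally I would identify $U(v)/E(v)$ with $\bUv$. As a set, $U(v)/E(v)$ consists of the classes $[a]_v$ ($a \in \Rmq$), which make up $\bRmq$, together with the singletons $\{x\}$ ($x \in M$), which we identify with $M$; so $U(v)/E(v) = \bRmq \,\dot\cup\, M$, just like $\bUv$. By Theorem \ref{thm6.6}, the quotient is a supertropical semiring with ghost ideal $\pi_{E(v)}(M)$, onto which $\pi_{E(v)}|M$ is an isomorphism since $E(v)$ is fiber conserving; each $[a]_v$ is tangible in the quotient, being distinct from every singleton $\{x\}$, $x \in M$, so $\tT(U(v)/E(v)) = \bRmq$; and the induced ghost map sends $[a]_v$ to $e[a]_v = [v(a)]_v = \{v(a)\}$, that is, to $\bv([a]_v)$. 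Hence $U(v)/E(v)$ is precisely the supertropical predomain $\STR(\bRmq, M \setminus \{0\}, \bv) = \bUv$ attached by Construction \ref{constr3.14} to the triple $(\bRmq, M \setminus \{0\}, \bv)$. Under this identification, $\pi_{E(v)} \circ \vrp_v$ agrees with $\bvrp_v = \chi$: for $a \in \gq$ we have $\vrp_v(a) = 0$ and $\pi_{E(v)}(0) = 0 = \chi(a)$, and for $a \notin \gq$ we have $\vrp_v(a) = a$ and $\pi_{E(v)}(a) = [a]_v = \chi(a)$. The one step that calls for a little care is the matching of the intrinsic semiring structure of $U(v)/E(v)$ (given by Theorem \ref{thm6.6}) with the synthetic one of $\bUv$ (given by Construction \ref{constr3.14}); but this is automatic, since a supertropical predomain is determined by its triple $(\tT, \mathcal G, v_R)$ (Remark \ref{rem3.13}) and the two triples coincide. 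Everything else is a routine unwinding of the definitions.
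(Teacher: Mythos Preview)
Your proof is correct and follows essentially the same approach as the paper's own proof. The paper dispatches the MFCE and ghost-separating claims with a single ``it is immediate'' and relies on the identification $U(v)/E(v)=\bUv$ already declared ``obvious'' just before the proposition; you simply unpack these steps in full, which is entirely reasonable and matches the intended argument.
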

\begin{proof} It is immediate that  $E(v)$  is \MFCE \ and ghost
separating. For  $a$ in $\Rmq$   we have
$$ \pi_{E(v)} (\vrp_v(a)) = \pi_{S(v)}(a) = [a]_v = \bvrp_v(a)$$
and for $a \in \gq$
$$ \pi_{E(v)} (\vrp_v(a)) = \pi_{E(v)}(a) = 0 = \bvrp_v(a),$$
again, Thus $\pi_{E(v)}$ is the transmission from  $\vrp_v$ to
$\bvrp_v$.
\end{proof}

\begin{cor}\label{cor11.11}
The \MFCE-relations $E$ on $U(v)$   such that $\vrp_v / E$ is very
strong are precisely  all $E \in \MFC(U(v))$ with $E \supset
E(v)$.
\end{cor}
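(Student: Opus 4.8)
The plan is to read the corollary off the lattice anti-isomorphism of Theorem~\ref{thm7.2} together with Theorem~\ref{thm11.8} and Lemma~\ref{lem11.7}; essentially all the work has already been done. Recall that, since $\vrp_v$ is the initial cover of $v$, we have $\Cov(v) = C(\vrp_v)$, and by Theorem~\ref{thm7.2}.b the map $E \mapsto [\vrp_v/E]$ is an order-reversing bijection from $\MFC(U(v))$ onto $\Cov(v)$. Hence, for MFCE-relations $E_1, E_2$ on $U(v)$,
$$ E_1 \supset E_2 \quad\Longleftrightarrow\quad \vrp_v/E_2 \ \geq \ \vrp_v/E_1. $$
Recall also that $\bvrp_v = \vrp_v/E(v)$ by Proposition~\ref{prop11.10}, and that $\bvrp_v$ is a tangible strong supervaluation (Proposition~\ref{prop11.5}), hence very strong by the observation following Definition~\ref{defn11.6} that every tangible strong supervaluation is very strong.

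First I would dispose of the ``only if'' direction. Suppose $\vrp_v/E$ is very strong, i.e.\ $\vrp_v/E \in \sCov^*(v)$. By Theorem~\ref{thm11.8}, $\bvrp_v$ dominates every very strong cover of $v$, so $\bvrp_v \geq \vrp_v/E$, that is $\vrp_v/E(v) \geq \vrp_v/E$. Applying the displayed equivalence above with $E_1 := E$ and $E_2 := E(v)$ gives $E \supset E(v)$.

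Then I would establish the converse. If $E \supset E(v)$, the same displayed equivalence shows that $\vrp_v/E$ is dominated by $\vrp_v/E(v) = \bvrp_v$. Since $\bvrp_v$ is very strong, Lemma~\ref{lem11.7} yields that $\vrp_v/E$ is very strong. Combining the two implications proves the corollary.

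There is essentially no obstacle here: the content is packaged in Theorem~\ref{thm11.8} (which identifies $\bvrp_v$ as the top of $\sCov^*(v)$) and in the complete-lattice anti-isomorphism of \S\ref{sec:7}. The only point requiring a word of care is that being ``very strong'' is a property of the equivalence class $[\vrp_v/E]$ and not merely of a chosen representative; this is immediate, since a semiring isomorphism over $M$ preserves both the inequality $e\vrp(a) < e\vrp(b)$ and the equation $\vrp(a+b) = \vrp(b)$ occurring in axiom SV$5^*$, so the notation $\vrp_v/E$ is legitimate throughout the argument.
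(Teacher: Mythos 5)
Your proof is correct and follows exactly the route the paper takes: the published proof simply cites Lemma \ref{lem11.7}, Theorem \ref{thm11.8}, Proposition \ref{prop11.10} and the anti-isomorphism of Theorem \ref{thm7.2}, which are precisely the ingredients you assemble (you merely spell out the order-reversal bookkeeping and the representative-independence point that the paper leaves implicit).
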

\begin{proof} This is a consequence of our observations above (Lemma \ref{lem11.7},
Theorem \ref{thm11.8}, Proposition \ref{prop11.10}) and the theory
in \S7, cf. Theorem \ref{thm7.2}.
\end{proof}

We now focus on the special case that $R$ is a semifield. Slightly
more generally we assume that every element of $\Rmq$ is
invertible, while $\gq$  may be different from $\{ 0\}$.

$\tT(U(v)) = \Rmq$ is a group under multiplication. Thus the
results from the end of \S8 apply. We have
$$\tT_e(U(v)) = \{ a\in R \ | \ v(a)  = 1_M\} = \go_v^*, $$
with $\go_v^*$ the unit group of the subsemiring
$$\go_v := \{  a \in R \ | \ v(a) \leq 1_M \}$$
of $R$. Notice that the set
$$ \mm_v := \go_v \setminus \go_v^* = \{a \in R \ | \ v(a) < 1_M\}$$
is an ideal of $\go_v$, just as in the classical (and perhaps most
important) case, where $R$ is a field and $v$ is a Krull valuation
on $R$.

By Theorem \ref{thm8.11} and Corollary \ref{cor8.12} we know that
every \MFCE-relation on $U(v)$ except $E(\nu)$ is orbital, hence
ghost separating. We have
$$ \vrp_v /  E(\nu) = v,$$
viewed as a supervaluation. The other supervaluations $\vrp$
covering $v$ correspond uniquely with the subgroups $H$  of
$\go_v^*$ via $\vrp = \vrp_v  /  E(H)$; cf. Corollary
\ref{cor8.12}.

Instead of $U(v) / E(H)$  and $\vrp_v / E(H)$ we now write $U(v) /
H$ and $\vrp_v / H$ respectively.  In this notation
$$ \tT(U(v) / H) = (\Rmq) /  H,$$
and $ \vrp_v / H: R \to \Uv / H$ is given by
$$ (\vrp_v /  H)(a) = \left\{
\begin{array}{lll}
  aH & \text{if} & a \in \Rmq, \\
  0 & \text{if} & a \in \gq.
\end{array}
\right.
$$
\begin{thm}\label{thm11.12} Assume that every element of $\Rmq$ is
invertible  (e.g. $R$ is a semifield).
\begin{enumerate} \eroman
    \item Every strong supervaluation covering $v$ is very strong.
    Except $v$ itself, viewed as a supervaluation, all these
    supervaluations are tangible. In other terms,
    $$ \sCov(v) = \sCov^*(v) = \tsCov(v) \cup \{ v \} .$$

    \item $\bvrp_v = \vrp_v /\langle 1 + \mm_v\rangle$, with $\langle 1 + \mm_v\rangle$
the group generated by $ 1 + \mm_v$ in $\go_v^*$ \footnote{If $R$
is a field, then $\langle 1 + \mm_v\rangle =  1 + \mm_v$.}.
    \item The tangible strong supervaluations $\vrp$  covering $v$
    correspond  uniquely with the subgroup $H$ of
$\go_v^* $ containing the
    semigroup $1 + \mm_v$ via $\vrp = \vrp_v /
    H$. Thus we have  an anti-isomorphism $H \mapsto  \vrp_v / H$
    from the lattice of all subgroups $H$ of $\go_v^*$ containing
    $1 + \mm_v$ to the lattice $\tsCov(v)$.
\end{enumerate}
\end{thm}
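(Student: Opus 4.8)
The plan is to reduce everything to the structure theory of $\MFC(U(v))$ under the additional hypothesis that $\Rmq$ is a group, already worked out at the end of \S8 (Theorem \ref{thm8.11}, Corollary \ref{cor8.12}). Recall that under this hypothesis $\tT(U(v)) = \Rmq$ is a group, $\tT_e(U(v)) = \go_v^*$, and every \MFCE-relation on $U(v)$ other than $E(\nu)$ is orbital, hence of the form $E(H)$ for a unique subgroup $H \subset \go_v^*$, with $\vrp_v/E(\nu) = v$ (viewed as a ghost supervaluation). Each orbital relation is ghost separating, so each $\vrp_v/H$ is tangible; this already gives the ``all but $v$ are tangible'' half of (i). Since a tangible supervaluation is automatically very strong (remark after Definition \ref{defn11.6}), and $v$ is trivially very strong, we get $\sCov^*(v) = \Cov(v)$, hence $\sCov(v) = \sCov^*(v)$, and then $\sCov(v) = \tsCov(v) \cup \{v\}$ from the tangibility dichotomy just noted; this proves (i).

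For (ii), the plan is to identify the subgroup $H$ with $\bvrp_v = \vrp_v/E(v)$, using Proposition \ref{prop11.10}, which says $E(v)$ is a ghost separating \MFCE-relation and $\bvrp_v = \vrp_v/E(v)$. Since $E(v) \neq E(\nu)$ (it is ghost separating, while $E(\nu)$ is not), $E(v)$ is orbital, so $E(v) = E(H)$ for $H = G_{E(v)} = \{a \in \tT(U(v)) \mid a \sim_{E(v)} 1\}$ by Proposition \ref{prop8.8}. Now I unwind the definition of $S(v)$ (Definition \ref{defn11.4}): for $a \in \Rmq$, $a \vsim 1$ means either $v(a) = 0$ (impossible here) or there are $c_1, c_2$ with $v(c_1) < v(a)$, $v(c_2) < 1$ and $a + c_1 = 1 + c_2$; since $v$ is strong this forces $v(a) = 1$, so $a \in \go_v^*$, and writing $b := c_2 - $ hmm — one cannot subtract, so instead I argue directly: the condition $a + c_1 = 1 + c_2$ with $v(c_i)$ strictly smaller says exactly that $a$ and $1$ have a common ``$+$-perturbation''. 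The claim to nail down is that, modulo passing to the generated subgroup, $H = \langle 1 + \mm_v \rangle$: every element $1 + m$ with $m \in \mm_v$ satisfies $(1+m) + 0 = 1 + m$ with $v(0) < v(1+m) = 1$ and $v(m) < 1$, so $1 + m \sim_v 1$, giving $1 + \mm_v \subset H$ hence $\langle 1 + \mm_v\rangle \subset H$; conversely if $a \vsim 1$ via $a + c_1 = 1 + c_2$ then $v(a) = 1$ and in the field case $a = (1 + c_2)(1 + c_1)^{-1}$... wait, $1 + c_1$ need not be invertible unless $v(c_1) < 1$, which holds since $v(c_1) < v(a) = 1$; so $1 + c_1 \in \go_v^*$ and similarly $1 + c_2 \in \go_v^*$, and in the field case $1 + c_1 \in 1 + \mm_v$ directly, whence $a \in (1 + \mm_v)(1 + \mm_v)^{-1} \subset \langle 1 + \mm_v\rangle$. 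In the general semifield case one gets $a \in \langle 1 + \mm_v\rangle$ by the same computation in the group $\go_v^*$. This gives (ii); the footnote (field case: $\langle 1 + \mm_v\rangle = 1 + \mm_v$) follows because $1 + \mm_v$ is then already closed under inverses, as $(1+m)^{-1} = 1 - m(1+m)^{-1}$ and $m(1+m)^{-1} \in \mm_v$.

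For (iii), I combine (i), (ii) and the anti-isomorphism $\MFC(U(v)) \setminus \{E(\nu)\} \cong \{$subgroups of $\go_v^*\}$ (Corollary \ref{cor8.12}) with Theorem \ref{thm7.2}, which says $E \mapsto [\vrp_v/E]$ is an anti-isomorphism from $\MFC(U(v))$ onto $\Cov(v)$. Under this correspondence $\tsCov(v)$ is the image of the \MFCE-relations $E$ that are ghost separating (equivalently $\vrp_v/E$ tangible, i.e.\ $E \neq E(\nu)$) and contain $E(v)$ (equivalently $\vrp_v/E$ very strong, by Corollary \ref{cor11.11}). Translating ``$E \supset E(v)$'' via the order-reversing-on-both-sides dictionary: $E(H_1) \subset E(H_2) \iff H_1 \subset H_2$, so $E(H) \supset E(v) = E(\langle 1 + \mm_v\rangle)$ means $H \supset \langle 1 + \mm_v \rangle$, i.e.\ (in the field case) $H \supset 1 + \mm_v$. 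Hence $\tsCov(v)$ corresponds exactly to subgroups $H$ of $\go_v^*$ containing $1 + \mm_v$, and the resulting bijection $H \mapsto \vrp_v/H$ is an anti-isomorphism of lattices. I expect the main obstacle to be the bookkeeping in (ii): carefully verifying that the perturbation condition defining $\sim_v$ translates precisely into membership in $\langle 1 + \mm_v\rangle$ — in particular checking $v(1 + c_i) = 1$ so that $1 + c_i$ lands in the unit group, and being honest about the difference between the semifield and field cases (where the generated group may be strictly larger than $1 + \mm_v$). Everything else is routine transport along the two anti-isomorphisms already proved.
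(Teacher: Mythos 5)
Your overall route is the same as the paper's: use the end of \S8 (Theorem 8.11, Corollary 8.12) to see that every \MFCE-relation on $U(v)$ other than $E(\nu)$ is orbital and corresponds to a subgroup of $\go_v^*$, identify $E(v)$ with $E(\langle 1+\mm_v\rangle)$ via Propositions 8.8 and 11.10, and then transport along the anti-isomorphisms of \S7 together with Corollary 11.11 to get (iii). But two steps are wrong as written. First, in (i) you assert that ``a tangible supervaluation is automatically very strong'' and deduce $\sCov^*(v)=\Cov(v)$. The remark after Definition 11.6 says only that a tangible \emph{strong} supervaluation is very strong; a tangible supervaluation need not even be tangibly additive (the initial cover $\vrp_v$ of a Krull valuation on a field is tangible but not strong), and $\sCov^*(v)=\Cov(v)$ is false: by Theorem 11.8 the top element of $\sCov^*(v)$ is $\bvrp_v$, in general a proper quotient of $\vrp_v$. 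The intended conclusion of (i) survives because you only need the implication for strong supervaluations: every supervaluation covering $v$ is either $v$ or tangible, a tangible strong one is very strong, and $v$ itself is very strong because $v$ is strong. That is exactly the paper's one-line argument.

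Second, in (ii) your converse inclusion rests on the identity $a=(1+c_2)(1+c_1)^{-1}$, which does not follow from $a+c_1=1+c_2$ (already false in a field, where the direct conclusion is $a=1+(c_2-c_1)\in 1+\mm_v$). The missing step --- and the paper's key trick, indispensable in a semifield where subtraction is unavailable --- is to factor $a+c_1=a(1+d_1)$ with $d_1=c_1a^{-1}$; since $v(c_1)<v(a)=1$ and $M$ is cancellative, $d_1\in\mm_v$, so $a(1+d_1)=1+c_2$ gives $a=(1+c_2)(1+d_1)^{-1}\in\langle 1+\mm_v\rangle$. With that correction, your forward inclusion $1+\mm_v\subset H$ (hence $\langle 1+\mm_v\rangle\subset H$, since $H$ is a subgroup by Example 8.7 and Proposition 8.8) completes (ii), and your argument for (iii) --- translating ``$E\supset E(v)$ and $E\ne E(\nu)$'' into ``$H\supset\langle 1+\mm_v\rangle$'' through Theorem 7.2, Corollary 8.12 and Corollary 11.11 --- is sound and matches the paper's terser proof.
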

\begin{proof} i): Every supervaluation $\vrp$ covering $v$ is
either tangible or $\vrp = v$. Thus, if $\vrp$ is strong, then
$\vrp$ is very strong in both cases.


ii): We know that $\bvrp_v = \vrp_v / E(v)$ (Proposition
\ref{prop11.10}). $E(v)$ is ghost separating,  hence orbital. The
subgroup $H$ of $\go_v^*$ with $E(H) = E(v)$ has the following
description  (cf. Proposition \ref{prop8.8}): If $a \in \Rmq =
\tT(U(v))$, then $ a\in H$ iff $a \vsim 1$. This means that  there
exist elements $c_1, c_2 \in \mm_v$ with $a + c_1 = 1 + c_2$. Now
$a + c_1 = a(1 + d_1)$ with $d_1 = \frac {c_1}{ a} \in \mm_v$.
Thus $a \vsim 1$ iff $a$ is in  the group $\langle 1 + \mm_v
\rangle$.

iii): Now obvious, since $\bvrp_v$ is the top element of
$\tsCov(v)$.

\end{proof}

We look again at the \GS-sentence
  \begin{equation}\renewcommand{\theequation}{$*$}\addtocounter{equation}{-1}\label{eq:str.1}
 \varepsilon_{\vrp(a)} (\tvrp(f)) \ \lmodg \  \vrp(\varepsilon_a(f))
\end{equation}
from \S9, valid for any $\vrp \in \sCov(v)$, $f \in R[\lm]$, $a
\in R^n$, cf. Theorem \ref{thm10.11}. Choosing  here any $\vrp \in
\tsCov(v)$, we learned that $(*)$ implies Kapranov's Lemma
(Corollary \ref{cor10.13}). But the statement  $(*)$  itself has a
different content for  different $\vrp \in \tsCov(v)$. If also
$\psi \in \tsCov(v)$ and $\vrp \geq \psi$, then we obtain
statement $(*)$ for $\psi$ from the statement $(*)$ for $\vrp$,
leaving $f$ and the tuple  $a$ fixed, by applying the transmission
$\al_{\psi, \vrp}$. Thus it seems that $(*)$ has the most content
if we choose for $\vrp$  the initial strong supervaluation
$\bvrp_v: R \to \bUv$.

\medskip
We close this section by an explicit description of $\bUv$ and
$\bvrp_v$ in a situation typically met in tropical geometry. Let
$R : = F\{ t \}$  be the field of formal \textbf{Puiseux series
with real powers} over any field\footnote{For the matter of
geometric applications, one usually needs $F$ to be algebraically
closed, but here we can omit this restriction.} $F$, cf.
\cite[p.6]{IMS}. The elements of $R$ are the formal series
$$ a(t) = \sum_{j \in I } c_j t^j$$
with $c_j \in F^*$ and $I \subset \R$ a well ordered set, in set
theoretic sense, (including $I =  \emptyset$). Let further $M$ be
the bipotent semifield  $T(\R_{>0})$ (cf. Theorem \ref{thm1.4}),
i.e.,
$$ M = \R_{>0} \cup \{ 0  \}  = \R_{\geq 0},$$
with the max-plus structure.

We define a (automatically strong) valuation $$ v : F \{ t \}  \to
M$$ by putting
$$ v(a(t)) :=
\vth^{\min(I)}
$$
if $a(t) \neq 0$, written as above, and $v(0) := 0$. Here $\vth$
is a fixed real number with $0 < \vth < 1$ (cf. \cite{IMS}) loc.
cit, but we use a multiplicative notation).  Now $\go_v^*$ is the
group consisting of all  series
$$ a(t) = c_0 + \sum_{j > 0}c_jt^j, \qquad c_0 \neq 0,$$
in $F\{ t \} $, and $1 + \mm_v$ is the subgroup consisting of
these series with $c_0 = 1$.

The equivalence relation $S(v)$ on $R^* = \tT(U(v))$  is given by
$$ a(t) \vsim b(t) \ \Longleftrightarrow \ \frac{ a(t)} { b(t)} \in 1 + \mm_v . $$
This means that the series $a(t)$ and $b(t)$  have the same
leading term $\ell(a(t)) = \ell (b(t))$. Thus the group of
monomials
$$ G :=  \{  ct^j \ |  \  c \in F^*, \ j \in \R \}$$
is a system of representatives of the equivalence classes of
$S(v)$. We identify
$$ G = R^* /  S(v) = \tT(U(v)) / E(v) .$$
Then $\bUv = \STR(G, \R_{>0 }, v|G) = G \dot\cup M$ in the
notation of Construction \ref{constr3.14}, and our supervaluation
$\bvrp_v : R \to \bUv$  is the map  $a(t) \mapsto \ell(a(t))$,
which sends each formal series $a(t)$ to its leading term. \{We
read $\ell(0) = 0$, of course.\}

In short, applying $v$ to a series $a(t)$ means taking  its
leading $t$-power  and replacing $t$ by $\vth$, while applying
$\bvrp_v$ means taking its leading term.

Similarly we can interpret  the bottom supervaluation $\hat v \in
\tsCov(v)$. The $t$-powers $t^j$, $j \in \R$, are a multiplicative
set of representatives of the $\tE$-equivalence classes.
Identifying
$$ \Uv / \tE = \{ t^j \ | \ j \in \R \}, $$
we can say that $\hat v(a(t))$ is the leading $t$-power of the
series $a(t)$. The ghost map from $\Uv/ \tE = D(M)$ to $M$ sends
$t$ to $\vth$.

\section{Iq-valuations on polynomial semirings and related supervaluations.}\label{sec:13}

Since the semiring of polynomials over a supertropical domain is
no longer supertropical (or analogously, the semiring of
polynomials over a bipotent semiring is no longer bipotent), we
would like a theory generalizing valuations to maps with values in
these  polynomial semirings. Unfortunately, the target is no
longer an ordered group (and is not even an ordered monoid). In
this section, we formulate some concepts of this paper
 in the more general context of monoids with a supremum, instead
 of ordered monoids, and show  how this encompasses
 Kapranov's
 Lemma.

 Recall that an operation $a\vee b$ on a set $S$ is  called
 a \textbf{sup} if it has a distinguished element~$0$
and satisfies the following properties for all $a,b,c\in S$:
 \begin{enumerate} \item  $0 \vee a = a;$
\item $a\vee b = b \vee a;$  \item $a \vee a = a;$ \item $a \vee
(b \vee c) = (a\vee b) \vee c.$
\end{enumerate}

In this case, we can define a partial order on $S$ by defining $a
\le b$ when $a \vee b = b.$ Then the following properties are
immediate for all $a,b,c \in S$:
 \begin{enumerate}\ealph
\item $0 \le a$; \item $a \vee b \ge a$ and $a \vee b \ge b;$
\item if $a \le c$ and $b \le c$, then $a \vee b \le c.$ (Indeed,
if $a\vee c = c$ and $b\vee c = c,$ then $(a\vee b) \vee c =
(a\vee c)\vee (b\vee c) = c\vee c = c.$)\end{enumerate}

 We also say that a given sup  $x\vee y$ on a monoid
$M$ is \textbf{compatible} with $M$ if $a(x\vee y) = ax \vee ay$
for all $a,x,y \in M$.

In order to axiomatize this in the language of semirings, we
recall that an \textbf{idempotent semiring} $R$ satisfies the
property that $x+x = x$ for all $x\in R$.

\begin{prop}\label{sup1} $ $
\begin{enumerate} \eroman
    \item  Every idempotent  semiring $R$ can be viewed as a
multiplicative monoid with a compatible sup   $\vee$ defined by
$$x \vee y : = x+y.$$

    \item Conversely, given   a monoid $M$ with a compatible sup, we can define an idempotent semiring structure on
$M$, with the same multiplication, and with addition given by $x+y
:= x \vee y$. \
\end{enumerate}
 \end{prop}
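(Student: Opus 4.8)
The statement is essentially a routine dictionary-passing check, so the plan is to verify the semiring axioms on each side of the correspondence and confirm the two constructions are mutually inverse.

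\medskip

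For part (i), I would start from an idempotent semiring $R$ and set $x \vee y := x + y$. The sup axioms (1)--(4) are then immediate translations of the semiring axioms: $0 \vee a = 0 + a = a$ gives (1); commutativity of $+$ gives (2); the idempotency hypothesis $x + x = x$ gives (3); associativity of $+$ gives (4). For compatibility, $a(x \vee y) = a(x+y) = ax + ay = ax \vee ay$ by left distributivity (and the commutative convention in force means we need not separately treat right multiplication). So $(R, \cdot, \vee, 0)$ is a multiplicative monoid — here I use that $R$ is a semiring, so $(R, \cdot, 1)$ is a monoid — carrying a compatible sup.

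\medskip

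For part (ii), given a monoid $M$ with a compatible sup $\vee$, define $x + y := x \vee y$ and keep the multiplication and the element $0$. I would check the semiring axioms: $(M, +, 0)$ is a commutative monoid by sup-axioms (1), (2), (4); $(M, \cdot, 1)$ is a monoid by hypothesis; left distributivity $a(x+y) = a(x \vee y) = ax \vee ay = ax + ay$ is exactly compatibility; and one must also check that $0$ is absorbing, i.e. $a \cdot 0 = 0$ — this is not part of the hypotheses as literally stated, so I would either note that ``monoid with a compatible sup'' is tacitly understood to come with an absorbing zero (consistent with the zero of a semiring), or remark that one adjoins the relation $a0 = 0$ as part of the structure; in the intended reading $0$ is the additive/absorbing zero throughout. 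Idempotency $x + x = x \vee x = x$ is sup-axiom (3). Thus $(M, +, \cdot, 0, 1)$ is an idempotent semiring.

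\medskip

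Finally I would observe that the two constructions are inverse to each other on the nose: starting from an idempotent semiring, forming the sup $x \vee y = x+y$, then forming the semiring with addition $x \vee y$ returns the original addition; and conversely. Since the underlying sets, multiplications, and distinguished elements are never altered, only the addition/sup operation is being renamed, this bijectivity is immediate. The only point requiring the least care — and the closest thing to an ``obstacle'' — is the bookkeeping around the zero element: making sure the distinguished element $0$ of the sup structure really is the additive identity \emph{and} the multiplicative absorbing element when we pass to the semiring, which I would handle by treating the absorbing property of $0$ as built into the notion of ``compatible sup on a monoid'' in this context.
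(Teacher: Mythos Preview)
Your proposal is correct and takes the same approach as the paper --- direct verification of the axioms --- except that you are considerably more thorough: the paper's entire proof is the single sentence ``All of the other verifications are immediate.'' Your flag about the absorbing property $a\cdot 0 = 0$ is a legitimate bookkeeping point that the paper simply passes over in silence.
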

\begin{proof} All of the
other verifications are immediate.
\end{proof}

\begin{rem} If $R$ is an idempotent semiring, then so is the
polynomial semiring $R[\lm ]$ as well as the matrix semiring
$M_n(R)$.
\end{rem}

Both of these assertions fail when we substitute ``bipotent'' for
``idempotent.'' Thus, it makes sense to pass to idempotent
semirings when studying
polynomials and matrices. In the case of semifields, we actually have a lattice
structure.

\begin{prop} If $R$ is a semifield, where $\vee$ is given by addition (as in Proposition \ref{sup1}), then there is a compatible inf
relation $\wedge$ given by $x \wedge y := \frac {xy}{x+y}$ (taking
$0 \wedge 0 = 0$), thereby making $(R,\vee,\wedge)$ a distributive
lattice satisfying \begin{equation}\label{mult} (x\vee y)(x\wedge
y) = xy, \quad \forall x,y\in R.\end{equation}\end{prop}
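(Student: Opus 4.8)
The plan is to recognize $x\vee y:=x+y$ and $x\wedge y:=\frac{xy}{x+y}=xy(x+y)^{-1}$ (with $0\wedge 0:=0$) as the binary supremum and infimum of $\{x,y\}$ in the poset $(R,\le)$, where $\le$ is the order \eqref{1.1}. Note first that the hypothesis (that $\vee=+$ is a sup in the sense of Proposition \ref{sup1}) forces $R$ to be idempotent, $t+t=t$ for all $t\in R$, which will be used repeatedly. Once $\vee,\wedge$ are identified as poset sup and inf, commutativity, associativity, idempotence and the two absorption laws are automatic, since sup and inf in any poset satisfy the lattice axioms; then only distributivity, the compatibility $a(x\wedge y)=ax\wedge ay$, and equation \eqref{mult} remain to be checked.

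First one must see that $\wedge$ is well defined: in a semifield, $x+y=0$ forces $x=y=0$, because if $x\neq 0$ then $0\le x\le x+y=0$ and antisymmetry of $\le$ (Remark \ref{rem1.3}) give $x=0$, a contradiction; hence for $(x,y)\neq(0,0)$ the element $x+y$ is invertible and $xy(x+y)^{-1}$ makes sense. That $\vee=+$ equals $\sup\{x,y\}$ is precisely properties (a)--(c) of a sup recorded before Proposition \ref{sup1}, the order attached to $\vee$ being literally \eqref{1.1}.

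Next comes the infimum claim. To check $x\wedge y\le x$ (trivial if $x=0$ or $y=0$, since then $x\wedge y=0$), multiply $(x\wedge y)+x$ by $x+y$ and use $t+t=t$: $xy+x(x+y)=x^{2}+xy=x(x+y)$, so, dividing by $x+y$, $(x\wedge y)+x=x$; symmetrically $x\wedge y\le y$. For the universal property, given $d\in R$ with $d\le x$ and $d\le y$ (the case $d=0$ being trivial, so assume $x,y\neq 0$), from $d+x=x$ and $d+y=y$ one computes $d(x+y)+xy=dx+dy+xy$, and here $dy+xy=(d+x)y=xy$, then $dx+xy=(d+y)x=xy$, whence $d(x+y)+xy=xy$, i.e. $d+(x\wedge y)=x\wedge y$, i.e. $d\le x\wedge y$. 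Thus $x\wedge y=\inf\{x,y\}$, so $(R,\vee,\wedge)$ is a lattice. Compatibility of $\wedge$ is then clean: for $a\neq 0$ the bijection $x\mapsto ax$ preserves $\le$ (Remark \ref{rem1.3}) with inverse $x\mapsto a^{-1}x$, hence is a poset automorphism of $R$ and commutes with $\sup$ and $\inf$, giving $a(x\wedge y)=ax\wedge ay$; for $a=0$ both sides are $0$. Equation \eqref{mult} is immediate: $(x+y)\cdot xy(x+y)^{-1}=xy$ for $(x,y)\neq(0,0)$, and $0\cdot 0=0$ otherwise.

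It remains to prove distributivity, which I would do by a direct computation, clearing denominators (legitimate in a semifield, where $pq^{-1}=rs^{-1}$ iff $ps=rq$). Writing $a\wedge(b\vee c)=(ab+ac)(a+b+c)^{-1}$ and, after combining over a common denominator and applying $abc+abc=abc$, $(a\wedge b)\vee(a\wedge c)=(a^{2}b+a^{2}c+abc)\bigl((a+b)(a+c)\bigr)^{-1}$, equality amounts to $(ab+ac)(a+b)(a+c)=(a^{2}b+a^{2}c+abc)(a+b+c)$; expanding both sides and repeatedly absorbing repeated monomials via $t+t=t$ reduces each to $a^{3}b+a^{3}c+a^{2}b^{2}+a^{2}c^{2}+a^{2}bc+ab^{2}c+abc^{2}$, while the cases where one of $a,b,c$ is $0$ are immediate. (Alternatively one only needs $a\wedge(b\vee c)\le(a\wedge b)\vee(a\wedge c)$, the reverse inequality holding in every lattice.) I expect the only genuinely delicate point to be bookkeeping: since $\wedge$ is defined by a quotient, every manipulation tacitly requires the relevant sum to be nonzero, so the "some variable vanishes" branches must be dispatched at each step; the distributivity identity itself is a finite monomial computation made routine by the idempotence law — laborious but not subtle.
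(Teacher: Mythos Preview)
Your proof is correct and in several respects more thorough than the paper's own. The two approaches differ in emphasis. You first identify $x\wedge y$ as the poset infimum $\inf\{x,y\}$ via explicit verification of the universal property, which immediately yields all the lattice axioms (commutativity, associativity, idempotence, absorption) without further work; compatibility then drops out because $x\mapsto ax$ is an order automorphism for $a\neq 0$. The paper instead proceeds purely algebraically: it notes that \eqref{mult} is immediate from the definition and asserts that compatibility and associativity of $\wedge$ follow from it (really just from the formula $x\wedge y=xy/(x+y)$), leaving the remaining lattice axioms implicit.

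For distributivity the contrast is sharper. You clear denominators and expand both sides of $a\wedge(b\vee c)=(a\wedge b)\vee(a\wedge c)$ to the same seven-term sum of monomials, using $t+t=t$ throughout. The paper instead proves the dual identity $(x\wedge y)\vee z=(x\vee z)\wedge(y\vee z)$ by a one-line inequality chain: since the direction $\le$ holds in every lattice, it only checks $\ge$ via
\[
\frac{xy}{x+y}+z\;\ge\;\frac{xy}{x+y+z}+z\;=\;\frac{(x+z)(y+z)}{(x+z)+(y+z)},
\]
the first inequality using that inversion reverses order (from $a+b=b$ one gets $a^{-1}+b^{-1}=a^{-1}$ by multiplying through by $(ab)^{-1}$). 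Your route is more elementary and fully self-contained; the paper's is slicker once the order-reversal of inversion is in hand.
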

\begin{proof} Property \eqref{mult} follows at once from the definitions,
and implies that $a(x\wedge y) = ax \wedge ay$, as well as
associativity of $\wedge.$ To check distributivity, we need to
check
$$(x\wedge y )\vee z = (x \vee z) \wedge ( y \vee z).$$
Since $\le$ is clear, we only check $\ge,$ and also may assume
$x,y,z \ne 0.$ Now \begin{equation}\begin{aligned}(x\wedge y )\vee
z & =
\frac{xy}{x+y} + z  \\ & \ge \frac{xy}{x+y+z} + z\frac {x+y+z}{x+y+z} \\
& = \frac{(x+z)(y+z)}{x+y+z} = \frac{(x+z)(y+z)}{(x+z)+(y+z)}
\\ & =(x \vee z) \wedge ( y \vee z).\end{aligned}\end{equation}
\end{proof}

Having the translation of the sup relation to semirings at hand,
we are ready to reformulate some of the results of this paper. But
first it is instructive to introduce a parallel of the \UTGR.

\begin{defn}\label{defnep.11}
$y \lmod x \ \Leftrightarrow \ \exists a \in R \ \text{with} \ y =
 x+a.$
\end{defn}

Clearly, $\lmod$ is a transitive binary  relation on $R$.
\begin{defn}\label{def:13.5} $R$ is an \textbf{upper-bound} semiring, written
ub-semiring, if the relation $\lmod$ is anti-symmetric; i.e.,
$$x \lmod y \ \text{and} \  y \lmod x \quad \Leftrightarrow \quad  x=y.$$
\end{defn}
 The reason for this terminology is that now the relation $\lmod$
 gives a partial ordering  on the set $R$ $$a \leq b  \ \  \text{iff}  \
 \ b \lmod a
  \ \  \text{iff}  \ \  \exists c \in R: \ a+c = b,$$
 and $x + y$ is an upper bound of $x,y$ in this ordering\footnote{All inequalities in the following will refer to this ordering.}.

\begin{rem}\label{rmk:13.6} $ $
\begin{enumerate} \eroman
    \item The condition that a semiring $R$ is ub can be rephrased as
follows:

For any $a,b,x \in R,$ if $x+a+b =x,$ then $x+a = x.$

    \item Any ub-semiring $R$ has the property that $a+b= 0$ implies $a=b=0$, by (i). (Take $x=0$.)
\end{enumerate}
\end{rem}

\begin{prop} Any idempotent semiring is an ub-semiring.
\end{prop}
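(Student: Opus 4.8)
The plan is to prove the statement by verifying anti-symmetry of the relation $\lmod$ directly from the idempotent law, without appealing to any cancellation. First I would unwind the definitions: assuming $x \lmod y$ and $y \lmod x$, there exist $a,b \in R$ with $x = y+a$ and $y = x+b$. The decisive point is that in an idempotent semiring any element of the form $y+a$ is \emph{absorbed} by $y$ under addition, in the sense that $(y+a)+y = y+a$; concretely,
$$x + y \;=\; (y+a)+y \;=\; (y+y)+a \;=\; y+a \;=\; x,$$
using only commutativity and associativity of $+$ together with the idempotent law $y+y=y$. Running the symmetric computation with $y = x+b$ gives $x+y = x+(x+b) = (x+x)+b = x+b = y$. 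Comparing the two identities yields $x = x+y = y$, which is exactly the anti-symmetry demanded in Definition \ref{def:13.5}.

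As an alternative route I would instead check the equivalent condition recorded in Remark \ref{rmk:13.6}(i), namely that $x+a+b = x$ implies $x+a = x$: here $x+a = (x+a+b)+a = x+(a+a)+b = x+a+b = x$, again invoking nothing beyond idempotency. Either way I do not anticipate a genuine obstacle; the only thing to be careful about is to make sure the argument uses exclusively the axioms of an idempotent semiring (commutativity, associativity, and $z+z = z$ for all $z$) and in particular never cancels a common summand, since the ambient additive monoid is typically very far from cancellative.
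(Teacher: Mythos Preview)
Your proposal is correct, and your alternative route via Remark~\ref{rmk:13.6}(i) is exactly the paper's proof: the paper computes $x+a = (x+a+b)+a = x+a+b = x$, which is your second argument verbatim (up to one intermediate regrouping). Your primary approach, verifying anti-symmetry of $\lmod$ directly, is of course equivalent and equally short.
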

\begin{proof} If $x+a+b =x,$ then $$x+a = (x+ a +b )+a = x+ a +b = x.$$
\end{proof}

%


If $R$ is any semiring, let $R[\lm] = R[\lm_1, \dots, \lm_n]$
denote the polynomial semiring over $R$ in a set of variables $\lm
= (\lm_1, \dots, \lm_n)$.

\begin{prop}\label{prop:13.11}
Every supertropical semiring $U$ is upper bound, and $U[\lm_1,
\dots, \lm_n]$ is upper bound for every $n$.

\end{prop}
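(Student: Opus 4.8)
The plan is to reduce the polynomial case to the coefficient case and then invoke the already-established structure of supertropical semirings. First I would treat the case of $U$ itself. Recall from Remark~\ref{rmk:13.6}.i that $U$ being upper bound is equivalent to the implication: for all $a,b,x \in U$, if $x+a+b = x$ then $x+a = x$. So I would fix $a,b,x \in U$ with $x + a + b = x$, and aim to show $x + a = x$, i.e.\ $x \ltmodWL$... no --- in the notation of Definition~\ref{defnep.11}, that $x \lmod (x+a)$ already holds trivially, and I must show the reverse, $x + a \lmod x$ fails to force anything beyond $x+a=x$; concretely the goal is $x + a = x$. The key computational input is Theorem~\ref{thm3.10}, which expresses $+$ on $U$ entirely through the ghost map $\nu_U$ and the total order on $eU$. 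Applying $\nu_U$ to $x+a+b=x$ gives $e x + e a + e b = e x$ in the bipotent semiring $eU = M$; since $M$ is bipotent (hence totally ordered by \eqref{1.1}), this forces $ea \le ex$ and $eb \le ex$. Now by Theorem~\ref{thm3.10}: if $ea < ex$ then $x + a = x$ directly; if $ea = ex$ then $x + a = ea = ex$, but also $x + a + b = x$ together with $ea = ex \ge eb$ gives, again by Theorem~\ref{thm3.10}, that $x = x+a+b$ equals $x+a$ (the $a$-term already dominates or ties), so $x + a = x$ in this case too. This settles the claim for $U$.

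Next I would handle $U[\lm_1,\dots,\lm_n]$. The cleanest route is to observe that the relation $\lmod$ on a polynomial semiring is controlled coefficientwise. Write elements of $U[\lm]$ in the multimonomial notation $f = \sum_i c_i \lm^i$. I claim that for $f, g \in U[\lm]$ one has $f \lmod g$ (i.e.\ $f = g + h$ for some $h \in U[\lm]$) if and only if $c_i(f) \lmod c_i(g)$ in $U$ for every multi-index $i$. The forward direction is immediate by reading off coefficients; the reverse direction follows by choosing, for each $i$, a witness $h_i \in U$ with $c_i(f) = c_i(g) + h_i$, and setting $h := \sum_i h_i \lm^i$ (a finite sum, since only finitely many coefficients are nonzero). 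Granting this characterization, antisymmetry of $\lmod$ on $U[\lm]$ reduces at once to antisymmetry of $\lmod$ on $U$: if $f \lmod g$ and $g \lmod f$, then for each $i$ we get $c_i(f) \lmod c_i(g)$ and $c_i(g) \lmod c_i(f)$, hence $c_i(f) = c_i(g)$ by the first part of the proof, hence $f = g$.

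The only genuine subtlety --- the step I expect to be the main obstacle, though it is mild --- is verifying the coefficientwise characterization of $\lmod$, specifically that addition in $U[\lm]$ really is computed coefficientwise with no interaction between monomials of different multidegree. This is true essentially by definition of the polynomial semiring (coefficients of distinct monomials add independently), so the argument is routine; one just has to be careful that ``$h$ has only finitely many nonzero coefficients'' is automatic because $f$ and $g$ do. I would also remark, for the record, that the same argument shows more: any ub-semiring $R$ has $R[\lm]$ ub, so the proposition is really the instance $R = U$ of a general fact, combined with the verification (carried out above via Theorem~\ref{thm3.10}) that every supertropical semiring is ub. Writing it in that order --- general lemma about $R[\lm]$, then the supertropical special case --- may be the most economical presentation.
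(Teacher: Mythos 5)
Your proposal is correct and follows essentially the same route as the paper: for $U$ you verify the condition of Remark~\ref{rmk:13.6}.i by multiplying $x+a+b=x$ by $e$ and running a case analysis on the order of $ea,eb,ex$ via Theorem~\ref{thm3.10}, and for $U[\lm_1,\dots,\lm_n]$ you reduce to coefficients, exactly as the paper does. The only (harmless) packaging difference is that the paper applies the Remark~\ref{rmk:13.6}.i reformulation coefficientwise in one line, whereas you first record the coefficientwise characterization of $\lmod$ and then invoke antisymmetry in $U$; your closing observation that the polynomial step really shows $R[\lm]$ is ub for any ub-semiring $R$ is also accurate.
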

\begin{proof}

We have to check the condition in Remark \ref{rmk:13.6}.i. Let
$x,a,b \in U$ be given with $x + a+ b = x$. We have to verify that
$x + a = x$. Multiplying by $e$ we obtain $ex + ea+ eb = ex$,
hence $ea \leq ex$ and $eb \leq ex$. If $ea < ex$, then  $x+a = x$
right away.If $eb < ex$, then  $x + b = x$, hence $x =x + a + b =
x +a$ again.  There remains the case that $ea=eb=ex$. Now $x +a +b
= ex$, hence $x$ is ghost, and $x+a = ex =x$ again. This proves
that $U$ is ub.

Let now $f,g,h \in U[\lm_1, \dots, \lm_n]$ be given with $f+g+h =
f$. We write $f = \sum \al_i \lm^i$, $g = \sum \bt_i \lm^i$, $h =
\sum \gm_i \lm^i$. Then $\al_i + \bt_i + \gm_i =\al_i$ for every
$i$, and we conclude that $\al_i + \bt_i = \al_i$ for every $i$,
hence $f+g =f$, as desired.
 \end{proof}

 The reason we want to consider the
idempotent semiring $M[\lm]$ is that we want to extend any
m-valuation $v: R \to M$ to the map $\tv : R[\lm] \to M[\lm]$,
where we define
\begin{equation}\label{tild} \tv  \bigg (\sum_i \a_i \lm _1 ^{i_1 }\dots
\lm _n ^{i_n } \bigg) = \sum_i v(\a_ i) \lm _1 ^{i_1 }\dots \lm _n
^{i_n } .\end{equation} Since $M[\lm]$ is no longer bipotent in
the natural way, we would like to generalize Definition
~\ref{defn2.1} to permit valuations to  idempotent semirings.

Unfortunately, $\tv $ as defined in \eqref{tild} need not satisfy
property V3 of Definition~\ref{defn2.1}, since $\tv  (fg)$ could
differ from $\tv  (f )\tv  (g).$
%
Indeed, if $f = \sum_i \a_i \lm^{i}$ and $g = \sum_j \bt_j \lm^{j
},$ with $i = (i_1, \dots,i_n)$ and $j = (j_1, \dots, j_n)$, then
writing $fg = \sum_k \left(\sum _{i+j =k} \a_i\bt_{j}\right) \lm
^{k },$ we have
\begin{equation*}\begin{aligned} \tv (fg) & =
\sum_k v\!\bigg(\sum _{i+j = k } \a_i\bt _{j}\bigg) \lm ^{k }
\\ & \le  \sum _k \sum _{i+j =k}  v(\a_i)v(\bt
_{j}) \lm ^{k }  \\ & = \bigg(\sum v(\a_ i)\lm^{i}
\bigg)\bigg(\sum v(\bt_ j)\lm^{j}
\bigg),\end{aligned}\end{equation*} where there could be strict
inequality. (Notice that our partial oredering on $M[\lm]$ extends
the total ordering of $M$.) Accordingly, we need a weaker notion:

\begin{defn}\label{def:13.10} An \bfem{iq-valuation} (= idempotent monoid
quasi-valuation) on a semiring~$ R$ is a map $v: R\to M$ into a
(commutative) idempotent semiring $M\ne\{0\}$ with the following
properties:
\begin{align*}
&IQV1: v(0)=0,\\
&IQV2: v(1)=1,\\
&IQV3:v(xy)\le v(x)v(y)\quad\forall x,y\in R,\\
&IQV4: v(x+y)\le v(x)+v(y)\quad \forall x,y\in R.
\end{align*}
\end{defn}
\{NB: Here as elsewhere we use the partial order introduced above
following Definition~\ref{def:13.5}.\}\vskip 1mm

The following is now obvious.

\begin{prop}\label{iqval} Suppose $M$ is a bipotent semiring and  $v: R \to M$
is an  \m-valuation.

\begin{enumerate} \eroman
    \item Then the map $\tv :
R[\lm ] \to M[\lm ]$  given above is an  iq-valuation.

    \item For any given $a \in M^{n}$, the map $\ep_a \circ \tv  : R[\lm] \to M$ is
    again an iq-valuation. \{Here  $\ep_a$ denotes the evaluation map $f(\lm) \mapsto f(a)$,
    as in the previous  sections.\}
\end{enumerate} \endbox
\end{prop}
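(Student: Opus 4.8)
The plan is to verify the four axioms IQV1--IQV4 of Definition~\ref{def:13.10} directly for $\tv$, and then to deduce part (ii) from part (i) by the general observation that composing an iq-valuation with a semiring homomorphism again yields an iq-valuation. Two preliminary points should be recorded first. Since a bipotent semiring is in particular idempotent and $M\neq\{0\}$, the polynomial semiring $M[\lm]$ is an idempotent semiring with $M[\lm]\neq\{0\}$ by the Remark following Proposition~\ref{sup1}, so it is a legitimate target for an iq-valuation. Next, the natural partial order on $M[\lm]$ can be read off coefficientwise: for $p=\sum_k p_k\lm^k$ and $q=\sum_k q_k\lm^k$ one has $p+q=q$ iff $p_k+q_k=q_k$ in $M$ for every multi-index $k$, i.e. $p\le q$ in $M[\lm]$ iff $p_k\le q_k$ in $M$ for all $k$; thus all inequalities below may be checked coefficientwise. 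Finally, by an easy induction on the number of terms, using axiom V4 and the monotonicity of addition on the totally ordered $M$, one gets $v\bigl(\sum_\ell x_\ell\bigr)\le\sum_\ell v(x_\ell)$ for any finite family in $R$.

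For (i): IQV1 and IQV2 are immediate, since $v(0)=0$ and $v(1)=1$ give $\tv(0)=0$ and $\tv(1)=1$. For IQV3 I would reproduce the computation already displayed just before Definition~\ref{def:13.10}: writing $f=\sum_i\a_i\lm^i$ and $g=\sum_j\bt_j\lm^j$, the coefficient of $\lm^k$ in $\tv(fg)$ is $v\bigl(\sum_{i+j=k}\a_i\bt_j\bigr)$, which by the inequality just noted together with axiom V3 is $\le\sum_{i+j=k}v(\a_i)v(\bt_j)$, and this is precisely the coefficient of $\lm^k$ in $\tv(f)\tv(g)$; hence $\tv(fg)\le\tv(f)\tv(g)$. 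For IQV4, the coefficient of $\lm^k$ in $\tv(f+g)$ is $v(\a_k+\bt_k)\le v(\a_k)+v(\bt_k)$ by V4, which is the coefficient of $\lm^k$ in $\tv(f)+\tv(g)$; so $\tv(f+g)\le\tv(f)+\tv(g)$.

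For (ii): the evaluation map $\ep_a:M[\lm]\to M$ is a semiring homomorphism, and every semiring homomorphism is monotone for the natural order (from $p+q=q$ one gets $\ep_a(p)+\ep_a(q)=\ep_a(q)$). Hence $\ep_a\circ\tv$ inherits IQV1 and IQV2 trivially; for IQV3 one has $\ep_a(\tv(fg))\le\ep_a\bigl(\tv(f)\tv(g)\bigr)=\ep_a(\tv(f))\,\ep_a(\tv(g))$, using monotonicity of $\ep_a$ in the first step and multiplicativity in the second, and IQV4 is checked the same way. Since $M\neq\{0\}$, $\ep_a\circ\tv$ is an iq-valuation.

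There is essentially no serious obstacle here — the statement is flagged as ``obvious'' — and the only points deserving a line of care are the coefficientwise description of $\le$ on $M[\lm]$ and the iterated application of axiom V4 over a finite sum (where one uses that $M$ is totally ordered, so the finite sum is a maximum, together with commutativity and monotonicity of $+$); everything else is a direct unwinding of the definitions.
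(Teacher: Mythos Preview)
Your proof is correct and is precisely the intended routine verification; the paper itself gives no argument beyond declaring the statement ``obvious'' and pointing to the computation already displayed just before Definition~\ref{def:13.10}, which you reproduce for IQV3. Your added remarks on the coefficientwise description of the order on $M[\lm]$ and on the iterated use of V4 are the only places where any care is needed, and you handle them correctly.
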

If $v$ is strong we can do better.

\begin{thm}\label{thm13.12}
 Assume that  $v: R \to M$ is a surjective strong \m-valuation.
Then, for any $a \in M^n$, $\ep_a \circ \tv  : R[\lm] \to M$ is
again a strong \m-valuation.
\end{thm}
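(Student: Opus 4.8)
The plan is to verify directly that $w:=\ep_a\circ\tv\colon R[\lm]\to M$ satisfies the axioms V1--V4 of Definition~\ref{defn2.1} together with the strength condition ``$w(f)\neq w(g)\Rightarrow w(f+g)=\max(w(f),w(g))$''. By Proposition~\ref{iqval}.ii the map $w$ is already an iq-valuation, so V1, V2, V4 hold and moreover $w(fg)\leq w(f)w(g)$ for all $f,g\in R[\lm]$; hence the only missing pieces are the reverse inequality $w(f)w(g)\leq w(fg)$ (which upgrades IQV3 to V3) and the strength condition. Throughout I write $f=\sum_i c_i\lm^i$, $g=\sum_j d_j\lm^j$ (finite sums, multi-index notation); since $M$ is bipotent, $w(f)=\max_i v(c_i)a^i$ and $w(f)w(g)=\max_{i,j}v(c_i)v(d_j)a^{i+j}$, using that multiplication in $M$ respects the order.

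The one new ingredient is an elementary consequence of the strength of $v$: if $x_1,\dots,x_m\in R$ and some index $l$ satisfies $v(x_l)>v(x_r)$ for all $r\neq l$, then $v(x_1+\dots+x_m)=v(x_l)$ --- indeed V4 gives $v(\sum_{r\neq l}x_r)\leq\max_{r\neq l}v(x_r)<v(x_l)$, and the defining property of ``strong'' applied to $x_l$ and $\sum_{r\neq l}x_r$ gives the claim. For the strength of $w$, suppose $w(f)\neq w(g)$, say $w(g)<w(f)$, and pick $i_0$ with $v(c_{i_0})a^{i_0}=w(f)$. Then $v(d_{i_0})a^{i_0}\leq w(g)<v(c_{i_0})a^{i_0}$, which forces $v(d_{i_0})<v(c_{i_0})$ by order-compatibility of multiplication, so $v(c_{i_0}+d_{i_0})=v(c_{i_0})$ by strength of $v$; since $c_{i_0}+d_{i_0}$ is the $\lm^{i_0}$-coefficient of $f+g$, this gives $w(f+g)\geq v(c_{i_0})a^{i_0}=w(f)$, and as the reverse inequality $w(f+g)\leq\max(w(f),w(g))$ is just V4 we conclude $w(f+g)=\max(w(f),w(g))$.

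For $w(f)w(g)\leq w(fg)$, hence V3: if $w(f)w(g)=0$ there is nothing to prove, so assume $w(f)w(g)\neq0$, whence $w(f),w(g)\neq0$. Fix a total order $\prec$ on $\N_0^n$ compatible with addition (e.g.\ the lexicographic order). Set $I_f:=\{i:v(c_i)a^i=w(f)\}$, $i_*:=\min_\prec I_f$, and define $I_g$, $j_*$ symmetrically; let $k_*:=i_*+j_*$. Then $v(c_{i_*})v(d_{j_*})a^{k_*}=w(f)w(g)\neq0$, and the key claim is that among all pairs $(i,j)$ with $i+j=k_*$ the quantity $v(c_i)v(d_j)$ is \emph{strictly} maximal at the single pair $(i_*,j_*)$: a competing pair would have to satisfy $v(c_i)a^i=w(f)$ and $v(d_j)a^j=w(g)$, i.e.\ $i\in I_f$ and $j\in I_g$, and then $i\succeq i_*$, $j\succeq j_*$, $i+j=i_*+j_*$ force $(i,j)=(i_*,j_*)$. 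Applying the elementary ingredient to the $\lm^{k_*}$-coefficient $\sum_{i+j=k_*}c_id_j$ of $fg$ (with $c_{i_*}d_{j_*}$ the strictly dominant term) yields $v\!\left(\sum_{i+j=k_*}c_id_j\right)=v(c_{i_*})v(d_{j_*})$, hence $w(fg)\geq v(c_{i_*})v(d_{j_*})a^{k_*}=w(f)w(g)$, which together with Proposition~\ref{iqval}.ii gives equality. (Surjectivity of $v$ enters only through the standing convention that $a=v(b)$ for some $b\in R^n$.)

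The step I expect to be the main obstacle is exactly this key claim: that the leading monomial $\lm^{k_*}$ of the product $fg$ suffers no cancellation under $v$, so that its coefficient has $v$-value precisely $v(c_{i_*})v(d_{j_*})$. The monomial order $\prec$ is introduced for no other reason than to make the maximizing pair at level $k_*$ unique, which is what allows the strength of $v$ to close the argument. When $M$ is cancellative --- in particular whenever $v$ is a valuation, the situation of most interest and the one matching the later sections --- the implication ``$v(c_i)a^i\cdot v(d_j)a^j=w(f)w(g)$ with $v(c_i)a^i\leq w(f)$ and $v(d_j)a^j\leq w(g)$ $\Rightarrow$ $v(c_i)a^i=w(f)$, $v(d_j)a^j=w(g)$'' is immediate; in the general bipotent case one has to argue this more carefully from the order structure of $M$.
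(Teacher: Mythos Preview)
Your proof is correct and follows essentially the same strategy as the paper's: obtain $w(fg)\le w(f)w(g)$ from Proposition~\ref{iqval}, then locate a single degree at which one term strictly $v$-dominates all others and invoke the strength of $v$ there to get the reverse inequality; the strength of $w$ is handled the same way in both proofs. There are two minor technical differences worth noting. First, the paper reduces to $n=1$ by induction and uses the natural order on $\N_0$ to guarantee uniqueness of the dominant pair $(k,\ell)$ at degree $k+\ell$; your monomial order $\prec$ is exactly what makes the same argument run directly for multi-indices. Second, the paper actually uses the surjectivity of $v$: it chooses $c\in R$ with $v(c)=a$ and applies strength of $v$ to the sum $\sum_{i+j=k+\ell}\al_ic^i\bt_jc^j$ in $R$, whereas you apply strength to $\sum_{i+j=k_*}c_id_j$ directly --- so your proof in fact shows that surjectivity is unnecessary, and your parenthetical remark about it can be dropped. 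Finally, you are right to flag the cancellativity point: the passage from ``$v(c_i)a^i\cdot v(d_j)a^j=w(f)w(g)$ with each factor bounded above'' to ``equality in each factor'' does use cancellation in $M$, and the paper's proof makes the analogous step (that the term at $(k,\ell)$ strictly dominates) without comment.
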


\begin{proof}
By an easy induction we restrict to the case of $n=1$. Given $f =
\sum _i \al_i \lm^i $, $g = \sum _j \bt_i \lm^i $ in $R[\lm]$ we
have to verify the  following:
\begin{enumerate}
    \item $\ep_a\tv  (fg ) = \ep_a  \tv (f)
    \cdot \ep_a  \tv (g)$;
    \item If $\ep_a  \tv (f) <  \ep_a  \tv (g)$,
    then $\ep_a  \tv (f+g)   = \ep_a  \tv (g)$.
\end{enumerate}

\smallskip
 (1): We know  already  by Proposition \ref{prop11.10}  that
$$ \ep_a\tv  (fg ) \leq \ep_a  \tv (f)
    \cdot \ep_a  \tv (g).$$
Due to the bipotence of $M$ we have smallest indices $k$ and
$\ell$ such that
$$
\begin{aligned} \ep_a\tv  (f ) =  \sum_i v(\al_i)a^i =
v(\al_k)a^k, \\
\ep_a\tv  (g ) =  \sum_j v(\bt_i)a^j = v(\bt_\ell)a^\ell.
\end{aligned}
$$
We chose some $c \in R $ with $v(c) = a$. Since $v$ is strong and
$k,\ell$ have been chosen minimally we have

$$ v \bigg( \sum_{i+j =  k + \ell} \al_i c^i \bt_j c^j\bigg) = v(\al_k c^k \bt_\ell c^\ell) = \ep_a  \tv (f)
    \cdot \ep_a  \tv (g).$$
    Thus
    $$\begin{aligned}
\ep_a  \tv (fg) & =  \sum_r v \bigg(
    \sum_{i + j =r} \al_i \bt_j
    \bigg) v(c)^r \\
     & =  \sum_r v \bigg(
    \sum_{i + j =r} \al_i c^i \bt_j c^j
    \bigg) \\ & \geq  \sum_{i+j = k + \ell} v \bigg(
    \sum_{i + j =k} \al_i c^i \bt_j c^j
    \bigg) \\[3 mm]
    & =  \ep_a  \tv (f)
    \cdot \ep_a  \tv (g).
    \end{aligned}$$
We conclude that
    $$  \ep_a  \tv (fg)
    =  \ep_a  \tv (f)
    \cdot \ep_a  \tv (g).$$

\smallskip
  $(2)$: Assume that  $ \ep_a  \tv (f) <  \ep_a  \tv (g)
 $. Using the same $k,\ell$, and $c$ as before we have for all $i$
$$
\begin{aligned} v(\al_ic^i) < v(\bt_\ell c^\ell), \\
 v(\bt_i c^i) \leq v(\bt_\ell c^\ell). \end{aligned}
$$
Now
$$ \ep_a  \tv (f+g)  = \sum_i v\((\al_i + \bt_i)c^i\),$$
and $v\((\al_i + \bt_i)c^i\) \leq v(\bt_\ell c^\ell)$ for all $i$,
but
$$v\((\al_\ell + \bt_\ell)c^\ell\) =  v(\bt_\ell c^\ell).$$
Thus,
$$ \ep_a  \tv (f+g) =  v(\bt_\ell c^\ell )= \ep_a  \tilde
v(g).$$
\end{proof}

 In particular, we could take
$v$ to be the natural  valuation on the field of  Puiseux series
with rational exponents, as used in \cite{Gathmann:0601322}, or
with real exponents as introduced above in \S11.

Let us formulate the analogue of Definition~\ref{defn4.1} in the
realm of semirings with ghosts.

\begin{defn}\label{edefn4.1}  An  iq-\bfem{supervaluation} on a semiring $R$ is a map $\varphi: R\to U$
  from $R$ to a  ub-semiring $U$ with ghosts, satisfying the following
  properties.
 \begin{alignat*}{2}
&IQSV1:\ &&\varphi(0)=0,\\
&IQSV2:\ &&\varphi(1)=1,\\
&IQSV3:\ &&\forall a,b\in R: \varphi(ab)\le \varphi(a)\varphi(b),\\
&IQSV4:\ &&\forall a,b\in R: e\varphi(a+b)\le
e(\varphi(a)+\varphi(b)).\end{alignat*}
\end{defn}

Here again we use the ordering given by the relation  $ \lmodg $.
The definition works in particular for $U$ a supertropical
semiring and to  Proposition~\ref{prop:13.11}.

We are ready for the main purpose of this section.
\begin{thm} Assume that $\varphi: R \to U$
is a surjective strong supervaluation, and $$v: R \to eU =M$$ is
the strong \m-valuation covered by $\vrp$. Let $a =
(a_1,\dots,a_n) \in U^n$ be given, and let $b:= (ea_1,\dots,ea_n)
\in M^n$.
\begin{enumerate}\eroman
    \item $\vrp$ can be extended to an iq-supervaluation $\tvrp : R[\lm] \to
    U[\lm]$ by the  formula
$$\tvrp  \bigg (\sum_i \a_i \lm^{i} \bigg) = \sum_i
\varphi(\a_i)\lm ^{i} .$$

    \item $\ep_a  \circ \tvrp  : R[\lm] \to U$ is  a strong
    supervaluation. It covers the (strong) valuation $\ep_b \circ
    \tv  : R[\lm] \to M$.
\end{enumerate}
\end{thm}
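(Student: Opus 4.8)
The plan is to treat (i) as a coefficientwise check and (ii) by mimicking the argument of Theorem \ref{thm13.12}, after the usual reductions. Write $f=\sum_i \a_i\lm^i$ for a polynomial in $R[\lm]$, so that $\tvrp(f)=\sum_i\vrp(\a_i)\lm^i$ and $\tv(f)=\sum_i v(\a_i)\lm^i$; since $e$ acts coefficientwise on $U[\lm]$ we have $eU[\lm]=M[\lm]$ and $e\tvrp=\tv$. For part (i): $U[\lm]$ is a ub-semiring by Proposition \ref{prop:13.11} and is readily seen to be a semiring with ghosts (axioms $(3.3')$, $(3.3'')$ hold coefficientwise); IQSV1 and IQSV2 are immediate; IQSV4 follows coefficientwise from V4, the $\lm^i$-coefficient of $e\tvrp(f+g)=\tv(f+g)$ being $v(\a_i+\bt_i)\le\max(v(\a_i),v(\bt_i))$. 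For IQSV3, writing $fg=\sum_k\gm_k\lm^k$ with $\gm_k=\sum_{i+j=k}\a_i\bt_j$, the $\lm^k$-coefficient of $\tvrp(f)\tvrp(g)$ is $\sum_{i+j=k}\vrp(\a_i)\vrp(\bt_j)=\sum_{i+j=k}\vrp(\a_i\bt_j)$ while that of $\tvrp(fg)$ is $\vrp(\gm_k)$; Corollary \ref{cor10.8} (valid since $\vrp$ is strong, hence tangibly additive) gives $\sum_{i+j=k}\vrp(\a_i\bt_j)\lmodg\vrp(\gm_k)$, so these coefficients differ by a ghost, and summing over $k$ yields $\tvrp(fg)\le\tvrp(f)\tvrp(g)$ in $U[\lm]$.

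For part (ii) put $\psi:=\ep_a\circ\tvrp:R[\lm]\to U$. As $\ep_a$ is a semiring homomorphism carrying $eU[\lm]=M[\lm]$ into $eU=M$, $\psi$ obeys SV1 and SV2, and a coefficient computation (using $mx=m(ex)$ for $m\in M$, so $v(\a_i)a^i=v(\a_i)b^i$) gives $e\psi=\ep_b\circ\tv$. Since $\vrp$ is surjective, so is $v$, and $v$ is strong; hence by Theorem \ref{thm13.12} the map $\ep_b\circ\tv$ is a strong \m-valuation. This gives SV4, and—once $\psi$ is shown to be a supervaluation—the assertion that it covers $\ep_b\circ\tv$. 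By the same easy induction as in Theorem \ref{thm13.12} we reduce to $n=1$. For SV5, suppose $\psi(f)+\psi(g)=\sum_i(\vrp(\a_i)+\vrp(\bt_i))a^i$ is tangible; then a single term $(\vrp(\a_{i_0})+\vrp(\bt_{i_0}))a^{i_0}$ dominates in $\nu$-value and is tangible, so $\vrp(\a_{i_0})+\vrp(\bt_{i_0})$ is tangible, whence tangible additivity of $\vrp$ gives $\vrp(\a_{i_0}+\bt_{i_0})=\vrp(\a_{i_0})+\vrp(\bt_{i_0})$; since $v(\a_i+\bt_i)\le\max(v(\a_i),v(\bt_i))$, the index $i_0$ still strictly dominates in $\psi(f+g)=\sum_i\vrp(\a_i+\bt_i)a^i$, so $\psi(f+g)$ and $\psi(f)+\psi(g)$ both equal $\vrp(\a_{i_0}+\bt_{i_0})a^{i_0}$.

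There remains SV3, $\psi(fg)=\psi(f)\psi(g)$. Applying the homomorphism $\ep_a$ to the inequality of part (i) gives $\psi(f)\psi(g)\lmodg\psi(fg)$, i.e. $\psi(f)\psi(g)=\psi(fg)+z_0$ with $z_0\in M$, while $e\psi(fg)=e\psi(f)\,e\psi(g)$ by Theorem \ref{thm13.12}; thus both sides have the same $\nu$-value, and SV3 fails only if $\psi(fg)$ is tangible and $z_0$ has the full $\nu$-value $e\psi(fg)$. To exclude this, proceed as in the $\ge$-half of the proof of Theorem \ref{thm13.12}: choose $c\in R$ with $v(c)=b$, and let $k$, resp. $\ell$, be the smallest index at which $v(\a_i)b^i$, resp. $v(\bt_j)b^j$, is maximal. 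If $\psi(fg)$ is tangible then $k$ and $\ell$ are necessarily the \emph{unique} maximizers, whence $\psi(f)=\vrp(\a_k)a^k$, $\psi(g)=\vrp(\bt_\ell)a^\ell$ are tangible and $\psi(f)\psi(g)=\vrp(\a_k\bt_\ell)a^{k+\ell}$; strongness of $v$ forces $v(\gm_{k+\ell})=v(\a_k\bt_\ell)$ and that $k+\ell$ is the unique dominant index for $\psi(fg)$, so $\psi(fg)=\vrp(\gm_{k+\ell})a^{k+\ell}$ with $\gm_{k+\ell}=\a_k\bt_\ell+\delta$ and $v(\delta)<v(\a_k\bt_\ell)$; finally, as $\vrp(\gm_{k+\ell})a^{k+\ell}$ is tangible the element $\vrp(\a_k\bt_\ell)+\vrp(\delta)=\vrp(\a_k\bt_\ell)$ is tangible, so tangible additivity of $\vrp$ yields $\vrp(\gm_{k+\ell})=\vrp(\a_k\bt_\ell)$ and hence $\psi(fg)=\psi(f)\psi(g)$.

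The main obstacle is this last step of SV3: one must make sure that aggregating the cross terms $\a_i\bt_j$ into the single coefficient $\gm_k$ before applying $\vrp$ creates no ghost at the top $\nu$-level—the supertropical counterpart of the strict inequality that strongness of $v$ supplies in Theorem \ref{thm13.12}—and this genuinely uses that $\vrp$ is tangibly additive, not merely that $v$ is strong. It may be convenient to reduce the multiplicativity to the case $\vrp=\vrp_v$ by writing $\vrp=\pi_E\circ\vrp_v$ (Example \ref{examp4.5}, Theorem \ref{thm6.6}) for an \MFCE-relation $E$ on $U(v)$, lifting $a$ to some $a'\in U(v)^n$ along $\pi_E$, so that $\psi=\pi_E\circ(\ep_{a'}\circ\widetilde{\vrp_v})$; here $\tT(U(v))=R\setminus\supp(v)$ is closed under multiplication, so the dominant monomials above are honestly tangible, and $\psi$ then inherits the structure of a supervaluation from Proposition \ref{prop5.6} and the fact that $\pi_E$ is a transmission—while the tangible-additivity axiom SV5 for $\psi$ is cleanest to verify directly as above, since it only appeals to the tangible additivity of $\vrp$ itself.
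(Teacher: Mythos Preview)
Your proof follows essentially the same route as the paper's. Part (i) is handled identically (coefficientwise, via the ghost-surpassing relation from Corollary~\ref{cor10.8}); for part (ii) you reduce to $n=1$, read off SV1, SV2, SV4 and the covered valuation from Theorem~\ref{thm13.12}, and then argue SV3 and SV5 by the minimal-index method of that theorem's proof---exactly as the paper does. Your treatment of SV5 is in fact more explicit than the paper's, which simply refers back to claim~(2) of Theorem~\ref{thm13.12}.

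One point deserves care. In your SV3 argument you write: ``as $\vrp(\gm_{k+\ell})a^{k+\ell}$ is tangible the element $\vrp(\a_k\bt_\ell)+\vrp(\delta)=\vrp(\a_k\bt_\ell)$ is tangible, so tangible additivity of $\vrp$ yields $\vrp(\gm_{k+\ell})=\vrp(\a_k\bt_\ell)$.'' This is circular as stated: you need $\vrp(\a_k\bt_\ell)$ tangible to invoke SV5, but tangibility of $\vrp(\gm_{k+\ell})$ does not by itself give tangibility of $\vrp(\a_k\bt_\ell)=\vrp(\a_k)\vrp(\bt_\ell)$ (nothing prevents $\vrp(\a_k)$ from being ghost for a non-tangible $\vrp$). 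The paper's own argument is equally terse at this spot. Your suggested alternative---lifting along $\pi_E$ to the initial cover $\vrp_v$ on $U(v)$, where $\tT(U(v))=R\setminus\gq$ is closed under multiplication so the dominant monomials are genuinely tangible---is the clean way to close this, and it is not in the paper; note however that the explicit description of $\vrp_v$ (Example~\ref{examp4.5}) requires $v$ to be a valuation, so strictly speaking this reduction covers that case. In the remaining case one should also observe that when $a$ itself is ghost one has $\psi=\ep_b\circ\tv$, so SV3 reduces directly to Theorem~\ref{thm13.12}.
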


\begin{proof} (i): If $a,b \in R $ then we know from \S9 that $\vrp(a) + \vrp(b) \lmodg \vrp(a+b)$.
This implies $\vrp(a) + \vrp(b) \lmod \vrp(a+b)$, i.e.
 \begin{equation}\renewcommand{\theequation}{$*$}\addtocounter{equation}{-1}\label{eq:str.1}
\vrp(a+b) \leq \vrp(a) + \vrp(b).
\end{equation}

An argument parallel to the one before Definition \ref{def:13.10}
now tells us that for $f,g \in R[\lm]$ we have
$$\tvrp(fg) \leq \tvrp(f) \cdot \tvrp(g).$$
Clearly $\tvrp$ extends $\vrp$, in particular $\tvrp(0) = 0$,
$\tvrp(1) =1 $. From $(*)$ it is also obvious that $\tvrp(f+g)
\leq \tvrp(f) + \tvrp(g)$, hence $$e\tvrp(f+g) \leq e\tvrp(f) +
e\tvrp(g).$$ Thus, $\tvrp$ is an iq-supervaluation. Clearly
$e\tvrp(f) = \tv(f)$ for all $f \in R[\lm]$. \{By the way, this
gives us again that $e\tvrp(f + g) \leq e\tvrp(f) + e\tvrp(g)$.\}
\smallskip

 (ii): Again  we restrict to the case of $n=1$ by an easy
induction. It is pretty obvious that $\ep_a  \tvrp: R[\lm] \to U$
obeys the rules SV1, SV2, SV4 from \S4 (Definition \ref{defn4.1}),
and $e \cdot \ep_a  \tvrp(f) = \ep_b  \tv(f)$ for every $f \in
R[\lm]$. Given $f = \sum_i \al_i \lm ^i $, $g = \sum_i \bt_i \lm
^i $ in $R[\lm]$ it remains to prove the following:
\begin{enumerate}
    \item $\ep_a  \tvrp(fg) = \ep_a  \tvrp(f) \cdot \ep_a
    \tvrp(g)$,
    \item If $\ep_a  \tvrp(f) \leq \ep_a  \tvrp(g)$ then $\ep_a  \tvrp(f+g) = \ep_a
    \tvrp(g)$.
\end{enumerate}

 (1): Let $k,\ell$  be the minimal indices such that
 \begin{equation}\renewcommand{\theequation}{$**$}\addtocounter{equation}{-1}\label{eq:str.1}
e \sum_i \vrp(\al_i) a^i = e \vrp(\al_k)a^k = e \ep_a \tvrp(f),
\end{equation}
 \begin{equation}\renewcommand{\theequation}{$***$}\addtocounter{equation}{-1}\label{eq:str.1}
e \sum_i \vrp(\bt_i) a^i = e \vrp(\bt_\ell)a^\ell = e \ep_a
\tvrp(g),
\end{equation}
(as in the proof of Theorem \ref{thm13.12}). We know by Theorem
\ref{thm13.12} that
$$ e ( \ep_a \circ \tvrp)(fg) = e \vrp(\al_k) a^k \cdot e \vrp(\bt_\ell) a^\ell
= e ( \ep_a \circ \tvrp)(f) \cdot e ( \ep_a \circ \tvrp)(g).$$

We chose some $c \in R$  with $\vrp(c) = a$. Using $(*)$ we obtain
$$
 \begin{aligned} ( \ep_a \circ \tvrp)(fg) & = \sum_r \vrp\bigg( \sum_{i+j =r} \al_i
 \bt_j\bigg) a^r \\
 & = \sum_r \vrp\bigg( \sum_{i+j =r} \al_i c^i \cdot
 \bt_j c^j \bigg) \\
  & \leq  \sum_r  \sum_{i+j =r} \vrp(\al_i c^i) \cdot
 \vrp(\bt_j c^j) \\
 & =    \sum_{i,j} \vrp(\al_i) a^i \cdot
 \vrp(\bt_j) a^j.
 \end{aligned}
$$
There is a single $\nu$-dominating term in this sum iff there is a
single $ \nu$-dominating term on the left of $(**)$ and of
$(***)$, so we conclude that $$\ep_a  \tvrp(fg) = \ep_a  \tvrp(f)
\cdot \ep_a
    \tvrp(g)$$
in all cases, using the fact that tangible elements $x,y$ of $U$
with $x \leq y$, $ex= ey$ are equal.

\smallskip  (2): This can be proved in the way analogous
to claim (2) in the proof of Theorem~\ref{thm13.12}.
\end{proof}

Thus, for $U$ a supertropical semiring, the evaluation map returns
us from  iq-supervaluations with values in $U[\lm]$ to the firmer
ground of supervaluations.

%
%
%

Looking again at Theorem \ref{thm10.11} we realize now that the
theorem gives pleasant examples of pairs of supervaluations which
obey a ``GS-relation" in the following sense.

\begin{defn}\label{def:13.4}
If $\rho:A \to V$ and $\sig:A \to V$  are supervaluations on a
semiring $A$ with values in the same supertropical semiring $V$,
then we say that \textbf{$\rho$ surpasses $\sig$  by ghost}, and
write $\rho \lmodg \sig$, if $\rho(a) \lmodg \sig(a)$ for every $a
\in A$.
\end{defn}

In this terminology Theorem \ref{thm10.11} reads as follows:
\begin{thm}\label{thm:13.5} Let $\vrp: R \to U$ be a strong
supervaluation. Then for any $a \in R^n$ the supervaluation $
\varepsilon_{\vrp(a)} \circ \tilde \vrp: R[\lm_1,\dots,\lm_n] \to
U$ surpasses the  supervaluation $ \vrp \circ \varepsilon_a :
R[\lm_1,\dots,\lm_n] \to U$ by ghost.
\end{thm}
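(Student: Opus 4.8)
The plan is to recognise that this theorem is nothing but Theorem~\ref{thm10.11} rephrased in the vocabulary of Definition~\ref{def:13.4}. Put $\rho := \ep_{\vrp(a)} \circ \tvrp$ and $\sig := \vrp \circ \ep_a$, both regarded as maps $R[\lm_1,\dots,\lm_n] \to U$. By Definition~\ref{def:13.4}, to establish $\rho \lmodg \sig$ it suffices to verify, for every polynomial $f \in R[\lm_1,\dots,\lm_n]$, that $\rho(f) \lmodg \sig(f)$. Unwinding the composites gives $\rho(f) = \ep_{\vrp(a)}(\tvrp(f))$ and $\sig(f) = \vrp(\ep_a(f))$, so this is precisely the \GS-relation $\ep_{\vrp(a)}(\tvrp(f)) \lmodg \vrp(\ep_a(f))$ furnished by Theorem~\ref{thm10.11}. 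First I would therefore simply quote Theorem~\ref{thm10.11} once for each $f$ and conclude by Definition~\ref{def:13.4}.

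The only genuine thing to check is that the hypotheses match up. A strong supervaluation is, by Definition~\ref{defn10.9}, in particular tangibly additive; this is exactly the property used in Corollary~\ref{cor10.8}, on which the proof of Theorem~\ref{thm10.11} rests (together with the transitivity of $\lmodg$). Hence Theorem~\ref{thm10.11} does apply to our $\vrp$. If one also wishes $\rho$ and $\sig$ to be bona fide supervaluations, so that Definition~\ref{def:13.4} applies verbatim, one first passes to the supertropical subsemiring $U' := \vrp(R) \cup e\vrp(R)$ of Proposition~\ref{prop4.2}, over which $\vrp$ is surjective; the preceding theorem of the present section (the one asserting that $\ep_a \circ \tvrp$ is a strong supervaluation whenever $\vrp$ is a surjective strong supervaluation), applied with the tuple $\vrp(a) \in (U')^n$, then yields that $\rho = \ep_{\vrp(a)} \circ \tvrp$ is a strong supervaluation, while $\sig = \vrp \circ \ep_a$ is a supervaluation directly from SV1--SV4, since $\ep_a$ is a semiring homomorphism.

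There is no real obstacle: the whole content already lies in Theorem~\ref{thm10.11}, whose proof in \secref{sec:10} does the work via Corollary~\ref{cor10.8}. The sole points requiring a word of care are the bookkeeping that ``strong'' supplies the tangible-additivity hypothesis needed there, and the minor verification, indicated above, that the two evaluated composites qualify as supervaluations on the polynomial semiring $R[\lm_1,\dots,\lm_n]$.
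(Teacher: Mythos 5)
Your proposal is correct and coincides with the paper's treatment: the paper offers no separate proof, stating explicitly that Theorem~\ref{thm:13.5} is just Theorem~\ref{thm10.11} read in the terminology of Definition~\ref{def:13.4}, which is exactly your argument. Your additional care in checking that $\ep_{\vrp(a)}\circ\tvrp$ and $\vrp\circ\ep_a$ are indeed supervaluations (via the preceding theorem of \secref{sec:13} and the homomorphism property of $\ep_a$) is a reasonable, if tacit in the paper, piece of bookkeeping.
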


Of course, we should look for other examples of pairs of
supervaluations $\rho:A \to V$ and $\sig:A \to V$ with $\rho
\lmodg \sig$. Here the ``classical" case that $A$ is a semifield,
or even a field, and $eV$ is cancellative, is perhaps not the most
interesting  one. Indeed, for such pairs $\rho$, $\sig$ we have $e
\rho(a) \geq e \sig(a)$ for every $a \in A$, and this forces $e
\rho(a) = e \sig(a)$ since for $a \neq 0$ also $e \rho(a^{-1})
\geq e \sig(a^{-1})$. Thus $\rho$ and $\sig$ cover the same
valuation  $e \rho = e \sig: A \to eV$. But for the pairs
occurring in Theorem \ref{thm:13.5}, where $A$ is a polynomial
semiring, the valuation $e\rho$ and $e \sig$ usually will have
even  different support, and $\rho$ can be a very interesting
``perturbation" of $\sig$ by ghosts.

The phenomenon of ``surpassing by ghost'' for supervaluations
shows clearly the importance of studying valuations and
supervaluations on semirings instead of just semifields.

\end{document}